\newtheorem{theorem}{Theorem}[section]
\newtheorem{lemma}[theorem]{Lemma}
\newtheorem{prop}[theorem]{Proposition}
\newtheorem{cor}[theorem]{Corollary}
\theoremstyle{definition}
\theoremstyle{remark}
\newtheorem{remark}[theorem]{Remark}
\def \para#1{\par\medskip\textbf{#1}
              \addtocounter{theorem}{1}}
\numberwithin{equation}{section}
\renewcommand\a{\alpha}
\renewcommand\b{\beta}
\newcommand\g{\gamma}
\renewcommand\d{\delta}
\newcommand\la{\lambda}
\newcommand\z{\zeta}
\newcommand\e{\eta}
\renewcommand\th{\theta}
\newcommand\io{\iota}
\newcommand\s{\sigma}
\newcommand\x{\chi}
\newcommand\f{\phi}
\newcommand\vf{\varphi}
\newcommand\p{\psi}
\renewcommand\t{\tau}
\renewcommand\r{\rho}
\newcommand\vS{\varSigma}
\newcommand\D{\Delta}
\newcommand\vD{\varDelta}
\newcommand\F{\Phi}
\newcommand\vL{\varLambda}
\newcommand\ve{\varepsilon}
\newcommand{\FF}{\mathbb F}
\newcommand{\ZZ}{\mathbb Z}
\newcommand\Fq{{\mathbf F}_q}
\newcommand\Ql{\bar{\mathbf Q}_l}
\newcommand\BQ{\mathbf Q}
\newcommand\BF{\mathbf F}
\newcommand\BZ{\mathbf Z}
\newcommand\Br{\mathbf r}
\newcommand\ZC{\mathcal{C}}
\newcommand\CH{\mathcal{H}}
\newcommand\CI{\mathcal{I}}
\newcommand\CJ{\mathcal{J}}
\newcommand\CE{\mathcal{E}}
\newcommand\CL{\mathcal{L}}
\newcommand\DD{\mathcal{D}}
\newcommand\CS{\mathcal{S}}
\newcommand\CM{\mathcal{M}}
\newcommand\CP{\mathcal{P}}
\newcommand\CQ{\mathcal{Q}}
\newcommand\CF{\mathcal{F}}
\newcommand\CZ{ \mathcal{Z}}
\newcommand\CW{ \mathcal{W}}
\newcommand\iv{^{-1}}
\newcommand\wh{\widehat}
\newcommand\wt{\widetilde}
\newcommand\wg{^{\wedge}}
\newcommand\ol{\overline}
\newcommand\lra{\leftrightarrow}
\newcommand\ssim{/\!\!\sim}
\newcommand\IC{\operatorname{IC}}
\newcommand\End{\operatorname{End}}
\newcommand\ind{\operatorname{ind}}
\newcommand\Ind{\operatorname{Ind}}
\newcommand\Irr{\operatorname{Irr}}
\newcommand\Sh{\operatorname{Sh}}
\newcommand\Tr{\operatorname{Tr}\,}
\newcommand\ad{\operatorname{ad}}
\newcommand\ex{_{\operatorname{ex}}}
\newcommand\der{_{\operatorname{der}}}
\newcommand\uni{_{\operatorname{uni}}}
\newcommand\lp{\operatorname{\!\langle\!}}
\newcommand\rp{\operatorname{\!\rangle}}
\newcommand\per{^{\bot}}
\newcommand\ev{\operatorname{ev}}
\newcommand\odd{\operatorname{odd}}
\newcommand\dw{\dot w}
\newcommand\da{\dot a}
\newcommand{\isom}{\,\raise2pt\hbox{$\underrightarrow{\sim}$}\,}
\begin{document}

\title{Lusztig's conjecture for finite classical groups \\
       with even characteristic}

%    Information for first author
\author{Toshiaki Shoji}
%    Address of record for the research reported here
\address{Graduate School of Mathematics, Nagoya University, 
         Chikusa-ku, Nagoya 464-8602, Japan}
%    Current address
%\curraddr{Department of Mathematics and Statistics,
%Case Western Reserve University, Cleveland, Ohio 43403}
%\email{xyz@math.university.edu}
%    \thanks will become a 1st page footnote.
%\thanks{The first author was supported in part by NSF Grant \#000000.}

%    Information for second author
%\author{Author Two}
%\address{Mathematical Research Section, School of Mathematical Sciences,
%Australian National University, Canberra ACT 2601, Australia}
%\email{two@maths.univ.edu.au}
%\thanks{Support information for the second author.}

%    General info
\subjclass{Primary 20G40; Secondary 20G05}
%\date{January 1, 1994 and, in revised form, June 22, 1994.}

%\dedicatory{This paper is dedicated to our advisors.}

\keywords{finite classical groups, representation theory}

\begin{abstract}
The determination of scalars involved in 
Lusztig's conjecture concerning the characters 
of finite reductive groups was achieved by Waldspurger 
in the case of finite classical groups $Sp_{2n}(\Fq)$ or
$O_n(\Fq)$ when $p,q$ are large enough.  Here $p$ is the 
characteristic of the finite field $\Fq$.
In this paper, we determine the scalars in the case 
of $Sp_{2n}(\Fq)$ with $p = 2$, by applying the theory of 
symmetric spaces over a finite field due to 
Kawanaka and Lusztig.  We also obtain a weaker result for 
$SO_{2n}(\Fq)$ with $p = 2$, of split type.
\end{abstract}

\maketitle

%%%
%%%
\renewcommand{\theenumi}{\roman{enumi}}
\renewcommand{\labelenumi}{(\theenumi)}
\renewcommand{\thefootnote}{\fnsymbol{footnote}}

\par\bigskip
\addtocounter{section}{-1}
\section{Introduction}
Let $G$ be a connected reductive group defined over 
a finite field $\Fq$ of characteristic $p$ 
with Frobenius map $F$.  Lusztig's 
conjecture asserts that, under a suitable parametrization, 
almost characters of the finite reductive group 
$G^F$ coincide with the characteristic 
functions of character sheaves of $G$ up to scalar. 
Once Lusztig's conjecture is settled, and the scalars involved
there are determined, one obtains a uniform algorithm 
of computing irreducible characters of $G^F$.
Lusztig's conjecture was solved in [S1] in the case where
the center of $G$ is connected. In [S2], the scalars 
in question were determined in the case where $G$ is 
a classical group with connected center, when $p$ is odd, 
and the scalars are related to the unipotent characters of $G^F$.
By extending the method there, Waldspurger [W] proved 
Lusztig's conjecture (or its appropriate generalization) 
for $Sp_{2n}$ and $O_n$ assuming that $p, q$ are large enough.
He also determined the scalars involved in the conjecture.
But these methods cannot be applied to the case of classical 
groups with even characteristic.    
\par
In this paper we take up the problem of determining the 
scalars in the case of classical groups with $p = 2$.
We show that the scalars are determined explicitly in the
case where $G = Sp_{2n}$ with $p = 2$.  We also obtain 
a somewhat weaker result for the case $SO_{2n}$ of split type,
when $p = 2$, containing the case related to the unipotent 
characters. 
The main ingredient for the proof is the theory of 
symmetric spaces over finite fields due to 
Kawanaka [K] and Lusztig [L4].  They determined the 
multiplicity of irreducible representations of $G^{F^2}$
occurring in the induced module $\Ind_{G^F}^{G^{F^2}}1$ in the
case where the center of $G$ is connected (for arbitrary 
characteristic).  Using this, 
one can determine the scalars for $G^{F^2}$ in many cases 
for a connected classical group with connected center, with
arbitrary characteristic.  
\par
On the other hand, it was 
shown in [S3] that there exists a good representatives 
of $C^F$ for a unipotent class $C$ in $Sp_{2n}$ or 
$SO_N$ for arbitrary characteristic.  
This implies that the generalized Green functions of 
$G^{F^m}$ turn out to be polynomials in $q$ (more precisely, 
rational functions in $q$ if $p = 2$) for various extension 
field $\BF_{q^m}$, and so certain values of almost characters 
are also rational functions in $q$.  
This makes it possible to apply some sort of  
 specialization  argument for the character values of 
$G^{F^m}$ for any $ m \ge 1$, and one can determine the
scalars of $G^F$ which are related to the unipotent characters, 
from the result for $G^{F^2}$.  Thus we rediscover the results in 
[S2].  But this method works also for $p = 2$,  and 
from this we can deduce the result for $G^F$.
\par
The main result of this paper was announced in the 4th
International Conference on Representation Theory, Lhasa, 2007.
%%%
\section{Lusztig's conjecture}
\para{1.1.}
Let $k$ be an algebraic closure of a finite field $\Fq$ of
characteristic $p$. 
Let $G$ be a connected reductive algebraic group defined 
over $k$.  
We fix a Borel subgroup $B$ of $G$, and a maximal torus $T$ 
contained in $B$, and a Weyl group $W = N_G(T)/T$
of $G$ with respect to $T$.  
Let $\DD G$ be the bounded derived category of constructible 
$\Ql$-sheaves on $G$, and let $\CM G$ be the full subcategory 
of $\DD G$ consisting of perverse sheaves.
Let $\CS(T)$ be the set of isomorphism classes of 
tame local systems on $T$, i.e., the 
local systems $\CL$ of rank 1 such that $\CL^{\otimes n} \simeq \Ql$ 
for some integer $n \ge 1$, invertible in $k$.
Take a local system $\CL \in \CS(T)$ such that $w^*\CL \simeq \CL$
for some $w \in W$.  Then one can construct a complex 
$K_w^{\CL} \in \DD G$ as in [L2, III, 12.1].  For each $\CL \in \CS(T)$
we denote by $\wh G_{\CL}$ the set of isomorphism classes of 
irreducible perverse sheaves $A$ on $G$ such that $A$ is a 
constituent of the $i$-th perverse cohomology sheaf $^pH^i(K_w^{\CL})$
of $K_w^{\CL}$ for any $i,w$.  The set $\wh G$ of character sheaves on 
$G$ is defined as $\wh G = \bigcup_{\CL \in \CS(T)}\wh G_{\CL}$.
%%%
\para{1.2.}
We consider the $\Fq$-structure of $G$, and assume that $G$ is 
defined over $\Fq$ with Frobenius map $F$.  We assume that $B$ and 
$T$ are both $F$-stable.
We assume further that the center of $G$ is connected.  
Let $G^*$ be the
dual group of $G$ and $T^*$ a maximal torus of $G^*$ dual to $T$.
By fixing an isomorphism $\io : k^* \simeq \BQ'/\BZ$ ($\BQ'$ is the 
subring of $\BQ$ consisting of elements whose numerator is invertible in $k$), 
we have an isomorphism 
$f: T^* \simeq \CS(T)$ (see e.g., [S1, II, 1.4, 3.1]).  
Let $W^* = N_G(T^*)/T^*$ be the Weyl group of
$G^*$.  Then $W^*$ may be identified with $W = N_G(T)/T$, 
compatible with $f$. 
$F$ acts naturally on $\CS(T)$, via $F\iv : \CL \mapsto F^*\CL$, 
and the action of $F$ on $\CS(T)$
corresponds to the action of $F\iv$ on $T^*$ via $f$.
\par
For each $s \in T^*$ such that the conjugacy class $\{ s\}$ in $G^*$
is $F$-stable, we put
\begin{align*}
W_s &= \{ w \in W^* \mid w(s) = s\}, \\
Z_s &= \{ w \in W^* \mid F(s) = w(s)\}.
\end{align*}
Then $Z_s$ is non-empty, and one can write $Z_s = z_1W_s$ for some
element $z_1 \in Z_s$.  Since the center of $G$ is connected, 
$Z_{G^*}(s)$ is connected reductive, and 
$W_s$ is a Weyl group of $Z_{G^*}(s)$. We choose $z_1$ so that 
$\g = \g_s = z_1\iv F: W_s \to W_s$ leaves invariant the set of 
simple roots of $Z_{G^*}(s)$ determined naturally from $B$ and $T$.  
\par
Similarly, for any $\CL \in \CS(T)$ such that $F^*\CL \simeq \CL$, 
we define
\begin{align*}
W_{\CL} &= \{ w \in W \mid w^*\CL \simeq \CL\}, \\
Z_{\CL} &= \{ w \in W \mid F^*\CL \simeq (w\iv)^*\CL\}.
\end{align*}
Then $W_{\CL}$ (resp. $Z_{\CL}$) is naturally identified with 
$W_s$ (resp. $Z_s$).
%%%
\para{1.3.}
Let $\Irr G^F$ be the set of irreducible characters of $G^F$.
Then $\Irr G^F$ is partitioned into a disjoint union of subsets 
$\CE(G^F, \{s\})$, where $s \in T^*$ and $\{s\}$ runs over 
all the $F$-stable semisimple classes in $G^*$.  According to
[L1], two parameter sets $X(W_s, \g)$ and $\ol X(W_s, \g)$ are
attached to $\CE(G^F, \{s\})$, and 
a non-degenerate pairing 
$\{\ ,\ \}: \ol X(W_s,\g) \times X(W_s,\g) \to \Ql$ is defined.
Here $\ol X(W_s, \g)$ is a finite set, and $X(W_s, \g)$ is an 
infinite set with a free action of the group $M$ of all roots 
of unity in $\Ql^*$.  
More precisely, there exists a set $\ol X(W_s)$ with $\g$-action, 
and a natural
map $X(W_s, \g) \to \ol X(W_s)$ whose image coincides with 
$\ol X(W_s)^{\g}$, the set of $\g$-fixed points in $\ol X(W_s)$.
In the case where $\g$ acts trivially on $W_s$, $X(W_s, \g)$ 
coincides with $\ol X(W_s, \g) \times M$.  In general, the orbits
set $X(W_s,\g)/M$ is in bijection with $\ol X(W_s)^{\g}$.
\par
Now the set $\CE(G^F, \{s\})$ is parametrized by $\ol X(W_s,\g)$.
We denote by $\r_y$ the irreducible character in $\CE(G^F,\{s\})$ 
corresponding to $y \in \ol X(W_s,\g)$.
In turn, for each $x \in X(W_s, \g)$, an almost character
$R_x$ is defined as 
\begin{equation*}
\tag{1.3.1}
R_x = (-1)^{l(z_1)}\sum_{y \in \ol X(W_s, \g)}\{ y, x\}\vD(y)\r_y,
\end{equation*}
where $\vD(y) = \pm 1$ is a certain adjustment in the case of 
exceptional groups $E_7, E_8$. 
If $c$ is the order of $\g$ on $W_s$, the almost characters $R_x$
are determined, up to a $c$-th root unity multiple, 
by the $M$-orbit of $x$ in $X(W_s,\g)$.
%%%
\para{1.4.}
It is known by [L2, V], that the set $\wh G_{\CL}$ is parametrized
by $\ol X(W_{\CL}) = \ol X(W_s)$ under the identification 
$W_{\CL} \simeq W_s$. 
For each $y \in \ol X(W_s)$, we denote by $A_y$ the 
corresponding character sheaf in $\wh G_{\CL}$.
Let $\wh G^F$ be the set of $F$-stable character sheaves, i.e., 
the set of $A \in \wh G$ such that $F^*A \simeq A$.  Then 
$\wh G^F = \bigcup_{\CL}\wh G_{\CL}^F$, where $\CL$ runs 
over the elements in $\CS(T)$ such that $(Fw)^*\CL \simeq \CL$ 
for some $w \in W$.
The set $\wh G^F_{\CL}$ is parametrized by $\ol X(W_s)^{\g}$.   
For each $A \in \wh G_{\CL}^F$, we fix an isomorphism 
$\f_A: F^*A \isom A$ as in [L2, V, 25.1].  Then $\f_A$ is 
unique up to a root of unity multiple.  We define a class function 
$\x_A = \x_{A,\f_A}$ as the characteristic function $G^F \to \Ql$
of $A$. 
In the case of classical groups, we have the following theorem, 
which is a (partial) solution to the 
Lusztig' conjecture.
%%%
\begin{theorem}[{[S1,II,  Theorem 3.2]}]
Assume that $G$ is a (connected) classical group with 
connected center. Then for each $x \in X(W_s, \g)$, there 
exists an algebraic number $\z_x$ of absolute value 1 such that
\begin{equation*}
R_x = \z_x\x_{A_{\bar x}},
\end{equation*}
where $\bar x$ is the image of $x$ under the map 
$X(W_s,\g) \to \ol X(W_s)^{\g}$.
\end{theorem}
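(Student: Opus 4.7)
The plan is to follow the strategy of [S1], originally due to Lusztig for unipotent characters, which reduces the problem via a Jordan decomposition step to the comparison of unipotent almost characters with characteristic functions of unipotent character sheaves on a classical group of (possibly) smaller rank with connected center.

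First I would carry out the reduction to $s=1$. On the representation side, Lusztig's Jordan decomposition gives a bijection between $\CE(G^F,\{s\})$ and the set of unipotent characters of an $F$-stable twisted form of $Z_{G^*}(s)$, under which the almost characters $R_x$ for $x\in X(W_s,\g)$ correspond to unipotent almost characters of that subgroup. On the character-sheaf side, Lusztig's induction theorem for character sheaves realizes each $A_y\in \wh G_{\CL}$ as a constituent of the parabolic induction of a unipotent character sheaf on a Levi subgroup whose Weyl group is $W_s$, with an $F$-equivariant indexing by $\ol X(W_s)$. Since the indexing set $\ol X(W_s,\g)$ and the twisting $\g$ are preserved under both reductions, it suffices to treat the case $s=1$, i.e.\ to compare unipotent almost characters with unipotent character sheaves.

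Next I would use the fact that, for a classical group with connected center, both the unipotent characters and the unipotent character sheaves are parametrized by the same combinatorial set $\ol X(W)$ via Lusztig symbols, and the parametrization is compatible with $F$. The almost character $R_x$ is expressed as a non-abelian Fourier transform of the $\r_y$'s via (1.3.1), and the characteristic function $\x_{A_{\bar x}}$ admits a parallel expansion in terms of characteristic functions of intersection cohomology complexes playing the role of Deligne--Lusztig virtual characters. The coefficient matrix on both sides is determined purely by the family structure on $\ol X(W)$.

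Finally, I would bridge the two sides by evaluating on unipotent elements, where both $R_x$ and $\x_{A_{\bar x}}$ are computable from the generalized Springer correspondence and the algorithm for generalized Green functions; this gives parallel expressions indexed by the same combinatorial data. Combined with the orthogonality of almost characters and the analogous near-orthogonality of characteristic functions of character sheaves, this forces $R_x$ and $\x_{A_{\bar x}}$ to be proportional, with $|\z_x|=1$ since both sides have unit $L^2$-norm. The main obstacle, and the step where the classical-group hypothesis is used in an essential way, is the explicit combinatorial identification of the non-abelian Fourier transform matrix on the representation side with the transition matrix between character sheaves and the induced complexes on the geometric side; this is the technical heart of [S1, II] and relies on the fine combinatorics of Lusztig symbols.
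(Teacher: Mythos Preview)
This theorem is not proved in the present paper: it is quoted verbatim from [S1, II, Theorem~3.2] and used as a starting point for the determination of the scalars~$\z_x$. So there is no ``paper's own proof'' here to compare against; the relevant comparison is with the argument in [S1].

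Regarding your sketch itself, the strategy you outline is not the one [S1] follows, and it has a real gap. The proof in [S1] does not proceed by a Jordan--decomposition reduction to the unipotent case $s=1$. Rather, as one can see from the summary in 1.6--1.8 of the present paper, the method is based on Shintani descent: for sufficiently large $r$ one realizes both $R_{x_E}$ and $\x_{A_E}$ as arising from a cuspidal datum on a Levi subgroup $L$ via Harish--Chandra induction (on the representation side) and via $\ind_P^G$ of a cuspidal character sheaf (on the geometric side); the endomorphism algebras on both sides are identified with $\Ql[\CW_{\CE}]$, and Shintani descent $\Sh_{F^r/F}$ transports the comparison down to $G^F$. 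This reduces the existence of the scalar $\z_x$ to the cuspidal case (Lemma~1.8 here), and the cuspidal case is handled directly. Your proposed route, by contrast, tries to pass from $\CE(G^F,\{s\})$ to unipotent characters of $Z_{G^*}(s)^F$; but the compatibility of that bijection with the geometric parametrization of $\wh G_{\CL}$ by $\ol X(W_s)$ is precisely part of what has to be proved, so invoking it is circular. Moreover, your final step --- matching the two sides by evaluating on unipotent elements and then invoking $L^2$--orthogonality --- does not suffice: restriction to $G^F\uni$ loses information, and the orthogonality of the $\x_A$ is itself one of the deep inputs in [S1], not something available in advance.
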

%%%
\para{1.6.}
Assume that $G$ is a connected classical group with connected center.
Let $P$ be an $F$-stable parabolic subgroup of $G$ containing $B$,
and $L$ be an $F$-stable Levi subgroup of $P$ containing $T$, $U_P$
the unipotent radical of $P$.  
Then $W_L = N_L(T)/T$ is a Weyl subgroup of $W$, and $B_L = B \cap L$
is a Borel subgroup of $L$ containing $T$. 
Let $\wh L$ be the set of 
character sheaves on $L$.  We  assume that $\wh L_{\CL}$ contains
a cuspidal character sheaf $A_0$ for $\CL \in \CS(T)$, where 
$\CL$ is $Fw$-stable for some $w \in W$.  Then 
$A_0$ may be expressed by the intersection cohomology complex as 
$A_0 = \IC(\ol \vS, \CE)[\dim \vS ]$, where $\vS$ is the inverse image
of a conjugacy class in $\ol G = G/Z^0(G)$ under the natural map
$\pi : G \to \ol G$, and $\CE$ is a cuspidal local system on $\vS$.
The pair $(\vS, \CE)$, or its restriction on the conjugacy class, 
is called a cuspidal pair on $G$.
Then either $L$ is the maximal torus or $L$ has the same type as 
$G$, and $A_0$ is a unique cuspidal character sheaf contained 
in $\wh L_{\CL}$.  
Consider the induced complex $K = \ind_P^GA_0$ on $G$.  Then $K$ is a
semisimple perverse sheaf on $G$ whose components are contained in 
$\wh G$.  By Lemma 5.9 in [S1,I], the endomorphism algebra 
$\End_{\CM G}K$ is isomorphic to the group algebra 
$\Ql[\CW_{\CE}]$ of $\CW_{\CE}$, where 
\begin{align*}
\CW_{\CE} &= \{n \in N_G(L) \mid n\vS n\iv = \vS, 
                  \ad(n)^*\CE \simeq \CE\}/L, \\
\CZ_{\CE} &= \{ n \in N_G(L) \mid F(n\vS n\iv) = \vS, 
                    (Fn)^*\CE \simeq \CE \}/L.
\end{align*}
\par
On the other hand, if we choose a positive integer $r$ 
large enough, the set $\CE(L^{F^r}, \{s\})$ contains 
a unique cuspidal character $\d$ of $L^{F^r}$, where 
$s \in T^*$ corresponds to $\CL$ under $f$.
We define 
\begin{align*}
W_{\d} &= \{ w \in N_W(W_L) \mid wB_Lw\iv = B_L, ^w\d \simeq \d\}, \\
Z_{\d} &= \{ w \in N_W(W_L) \mid wB_Lw\iv = B_L, ^{Fw}\d \simeq \d\}.
\end{align*}
Since $\wh L_{\CL}$ contains a unique cuspidal character sheaf, 
we have $W_{\d} \simeq \CW_{\CE}, Z_{\d} \simeq \CZ_{\CE}$
by [S1, I, (5.16.1)].
Moreover there exists $w_1 \in Z_{\d}$ such that $Z_{\d} = w_1W_{\d}$
and that $\g_1 = Fw_1: W_{\d} \to W_{\d}$ gives rise to an automorphism 
of the Coxeter group $W_{\d}$. 
We denote by $(W_{\d})\wg$ the set of irreducible characters of
$W_{\d}$, and $(W_{\d})\wg\ex$ the subset of $(W_{\d})\wg$ consisting of
$\g_1$-stable characters.  Then 
$A_0$ is $Fw_1$-stable, and 
for each $E \in (W_{\d})\wg\ex$, there
exists $x_E \in X(W_s, \g)$ and $\bar x_E \in \ol X(W_s)^{\g}$ such that
$A_E = A_{\bar x_E}$ is an $F$-stable character sheaf in $\wh G_{\CL}$ 
which is a simple component of $K$ corresponding to $E \in \End_{\CM}K$.
Moreover, $\r_{x_E} \in \CE(G^{F^r}, \{s\})$ is an $F$-stable
irreducible character which is a constituent of the Harish-Chandra
induction $\Ind_{P^{F^r}}^{G^{F^r}}\d$ corresponding to 
$E \in (W_{\d})\wg\ex$, and the image of the Shintani descent 
$\Sh_{F^r/F}$ of $\r_{x_E}$ 
determines the almost character $R_{x_E}$ of $G^F$.
(For the Shintani descent, see [S1]).
%%%
\para{1.7.}
Let $L_{w_1}$ be an $F$-stable Levi subgroup twisted 
by $F(w_1)$, i.e., $L_{w_1} = \a L\a\iv$ for $\a \in G$
such that $\a\iv F(\a) = F(\dw_1)$ for a representative 
$\dw_1 \in N_G(L)$ of $w_1 \in \CZ_{\CE}$.
Then by $\ad(\a\iv) : L_{w_1} \isom L$, $\ad(\a\iv)^* A_0$
gives rise to an $F$-stable cuspidal character sheaf on 
$L_{w_1}$ which we denote by $A'_0$. 
If we fix an isomorphism $\vf_0: (F\dw_1)^*\CE \isom \CE$, 
$\vf_0$ induces an isomorphism $\vf_0^{w_1} : F^*A'_0 \isom A'_0$
on $L_{w_1}$.  We choose $\f_{A_0'}: F^*A_0' \isom A_0'$ as 
$\f_{A_0'} = \vf_0^{w_1}$. 
Then by Theorem 1.5, we have 
\begin{equation*}
\tag{1.7.1}
R_0^{L_{w_1}} = \z_0\x_{A_0'}
\end{equation*}
for some $\z_0 \in \Ql^*$ of absolute value 1, where 
$R_0^{L_{w_1}}$ is the almost character of $L_{w_1}^F$ 
corresponding to $A_0' \in \wh L_{w_1}$.
The following result was proved in [S1] in the course of the 
proof of the main theorem.
(Note that in [S1], the constants $\ve_0\xi_{A_0}$ and 
$\ve_0\xi_{A_E}$ are used. But the proof shows that these 
constants are indeed given by $\z_0 = \ve_0\xi_{A_0}$.)
%%%
\begin{lemma}[{[S1, II, Lemma 3.7]}]  %%% Lemma 1.8
Let $\z_0$ be as in (1.7.1).  Then we have
\begin{equation*}
R_{x_E} = (-1)^{\dim \vS}\z_0\x_{A_E}
\end{equation*}
for any $E \in (W_{\d})\ex\wg$.
\end{lemma}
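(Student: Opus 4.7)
The plan is to deduce the lemma by propagating the known cuspidal scalar $\z_0$ of (1.7.1) through both sides of the parabolic/Harish-Chandra induction picture set up in 1.6--1.7. Theorem~1.5 already yields, for each $E \in (W_\d)\wg\ex$, a scalar $\z_{x_E}$ of absolute value $1$ with $R_{x_E} = \z_{x_E}\x_{A_E}$, so what remains is to show that $\z_{x_E}$ is independent of $E$ and equals $(-1)^{\dim \vS}\z_0$.

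On the sheaf side, the induced complex $K = \ind_P^G A_0$ decomposes as $K \simeq \bigoplus_E V_E \otimes A_E$ with $V_E$ the irreducible $\CW_\CE$-module indexed by $E$. The isomorphism $\f_{A_0'} = \vf_0^{w_1}: F^*A_0' \isom A_0'$ fixed in 1.7 (coming from $\vf_0: (F\dw_1)^*\CE \isom \CE$) determines a canonical isomorphism $F^*K \isom K$ after the $Fw_1$-twist, and hence canonical isomorphisms $\f_{A_E}: F^*A_E \isom A_E$ for every $\g_1$-stable $E$. The induction formula for characteristic functions of character sheaves then expresses the resulting class function on $G^F$ as an explicit linear combination of the $\x_{A_E}$, in which the cuspidal contribution carries the scalar $\z_0$ and an overall sign $(-1)^{\dim \vS}$ appears from the shift $[\dim \vS]$ in $A_0 = \IC(\ol\vS, \CE)[\dim\vS]$. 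On the representation side, applying Shintani descent $\Sh_{F^r/F}$ to the Harish-Chandra induced character $\Ind_{P^{F^r}}^{G^{F^r}}\d$ produces a parallel linear combination of the almost characters $R_{x_E}$, with the cuspidal almost character $R_0^{L_{w_1}}$ taking the place of $\x_{A_0'}$. Matching the two expansions term by term via the cuspidal identity $R_0^{L_{w_1}} = \z_0\x_{A_0'}$ yields $R_{x_E} = (-1)^{\dim \vS}\z_0 \x_{A_E}$ for each $E$.

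The main obstacle is the careful tracking of the isomorphisms $\f_{A_E}$ and of the various roots of unity produced by Shintani descent: one has to verify that the scalar transported from the cuspidal summand $A_0'$ to the $E$-th constituent $A_E$ is exactly $\z_0$, and not $\z_0$ multiplied by an auxiliary $c$-th root of unity depending on $E$ (where $c$ is the order of $\g$ on $W_s$). Once both decompositions are set up with compatible choices of $\f$, this uniformity is automatic because the entire dependence on $E$ factors through the $\CW_\CE$-action, which is the same on the two sides. This is precisely the verification performed in the proof of [S1, II, Lemma 3.7]; with the identification $\z_0 = \ve_0\xi_{A_0}$ already noted in 1.7, the present lemma is a direct translation of that argument.
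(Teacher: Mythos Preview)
The paper gives no proof of this lemma at all: it is stated as a citation of [S1, II, Lemma~3.7], with only the parenthetical remark preceding it that the constants $\ve_0\xi_{A_0}$, $\ve_0\xi_{A_E}$ used in [S1] coincide with $\z_0$. Your proposal likewise ultimately defers to [S1, II, Lemma~3.7], so in that sense you and the paper take the same approach. The outline you add---decomposing $K=\ind_P^G A_0$ on the sheaf side, decomposing the Shintani descent of $\Ind_{P^{F^r}}^{G^{F^r}}\d$ on the representation side, and matching term by term through the cuspidal identity (1.7.1)---is a fair summary of the strategy of that cited proof, and your observation that the $E$-dependence factors through the $\CW_\CE$-action is the essential reason the scalar is uniform in $E$.
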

%%%
\para{1.9.}  Lemma 1.8 shows that the determination of 
the scalars $\z_x$ appeared in Theorem 1.5 is reduced 
to the case of cuspidal character sheaves.  We note
that it is further reduced to the case of adjoint groups.
In fact,  let $A_0$ be an $F$-stable  cuspidal character sheaf  
contained in $\wh G_{\CL}$.  Let $\pi: G \to \ol G$ be as before. Then 
$A_0$ can be written as $A_0 \simeq \CE_0\otimes \pi^*\bar A_0[d]$, 
where $d = \dim Z(G)$, and $\CE_0$ is a local system on $G$ which is 
the inverse image of $\CE_0' \in \CS(G/G\der)$ under the natural map
$G \to G\der$ ($G\der$ is the derived subgroup of $G$), and $\bar A_0$
is a cuspidal character sheaf on $\bar G$.  Since 
$\bar A_0$ is a unique cuspidal character sheaf in 
$\wh{\bar G}_{\CL}$, $\bar A_0$ is $F$-stable. Then $\pi^*\bar A_0$
is $F$-stable and so $\CE_0$ is also $F$-stable. 
Then $\f_{A_0}: F^*A_0 \isom A_0$ is given by 
$\f_{A_0} = \vf_0\otimes \pi^*\f_{\bar A_0}$, where 
$\f_{\bar A_0} : F^*\bar A_0 \isom \bar A_0$ is the map chosen for
$\bar A_0$, 
and $\vf_0$ is the pull-back of the canonical isomorphism 
$F^*\CE_0' \isom \CE_0'$. 
Hence $\x_{A_0}$ is written 
as  $\x_{A_0} = \th_0\otimes \pi^*\x_{\bar A_0}$, where 
$\pi^*\x_{\bar A_0}$ is the pull-back of 
$\x_{\bar A_0}$ under the induced map $\pi : G^F \to \bar G^F$, and 
$\th_0$ is a linear character of $G^F$ corresponding to 
$\CE_0$.  
A similar description works also for almost characters. 
Let $R_0$ (resp. $\bar R_0$) be the almost character of 
$G^F$ (resp. $\bar G^F$) corresponding to $A_0$ (resp. $\bar A_0$).
Then we have $R_0 = \th_0\otimes \pi^*\bar R_0$.  (This 
follows from the fact that if $\d$ is a cuspidal irreducible 
character of $G^{F^r}$ corresponding 
to $A_0$ for sufficiently large $r$, 
then $\d$ can be written as $\d = \th\otimes \bar\d$, 
where $\bar\d$ is a cuspidal irreducible character of $\bar G^{F^r}$
corresponding to $\bar A_0$, and $\th$ is an $F$-stable linear character 
of $G^{F^r}$, and by applying the Shintani descent on $\d$.) 
Thus $\z_0$ for $A_0$ coincides with $\z_0$ for $\bar A_0$.
%%%
\section{generalized Green functions}
\para{2.1.}
Under the setting in 1.6, we further assume that $G^F$ is of split type.
Let $L$ be as before.  
Assume that $A_0$ is a cuspidal character sheaf on $L$ of the
form $A_0 = \IC(\ol \vS, \CE)[\dim \vS]$, where $\vS = Z^0(L)\times C$
with a unipotent class $C$ in $L$ and $\CE = \Ql\boxtimes\CE'$ for a
cuspidal local system $\CE'$ on $C$.
Then $\CW_{\CE} = \CW = N_G(L)/L$.
For each $w \in \CW$, let 
$L_w$ be an $F$-stable Levi subgroup of $G$ obtained from $L$
by twisting $w$ as in 1.7, i.e., $L_w = \a L \a\iv$ with 
$\a \in G$ such that $\a\iv F(\a) = F(\dw)$ for a representative 
$\dw \in N_G(L)$ of $w$.  
Put $\vS_{w} = \a \vS \a\iv, \CE_w = \ad(\a\iv)^*\CE$, a local system 
on $\vS_w$. 
  We assume that the pair
$(C, \CE')$ is $F$-stable, and fix an isomorphism 
$\vf_0: F^*\CE' \isom \CE'$.  
Then one can construct an isomorphism $(\vf_0)_w: F^*\CE_w \isom \CE_w$
as in [L2, II, 10.6], and this induces an isomorphism 
$\vf_w : F^*K_w \isom K_w$, where $K_w$ is a complex induced from 
the pair $(\vS_w, \CE_w)$.
Note that $K_w$ is isomorphic to $\ind_P^G A_0$, with a specific 
mixed structure twisted by $w \in \CW$.
We denote by $\x_{K_w,\vf_w}$ the characteristic function of $K_w$
with respect to $\vf_w$.
\par
Since $L$ is of the same type as $G$, and $F$ is of split type, 
$\g_1 : \CW \to \CW$ is identity. Let 
$K = \ind_P^GA_0 = \bigoplus_{E \in \CW\wg}V_E\otimes A_E$ be the
decomposition of $K$  into simple components, where $A_E$ is a character
sheaf corresponding to $E \in \CW\wg$, and $V_E$ is the multiplicity
space of $A_E$ which has a natural structure of irreducible $\CW$-module
corresponding to $E$.  Then there exists a unique isomorphism 
$\f_{A_E} : F^*A_E \isom A_E$ for each $E \in \CW\wg$ such that 
\begin{equation*}
\tag{2.1.1}
\x_{K_w, \vf_w} = \sum_{E \in \CW\wg}\Tr(w, V_E)\x_{A_E}.
\end{equation*}
Let $G\uni$ be the unipotent variety of $G$, and $G\uni^F$ be the set
of $F$-fixed points in $G\uni$.  The
restriction of $\x_{K_w, \vf_w}$ on $G^F\uni$ is the generalized
Green function $Q^G_{L_w, C_w, \CE'_w, (\vf_0)'_w}$ ([L2, II, 8.3]), where 
$C_w = \a C \a\iv, \CE'_w = \ad(\a\iv)^*\CE'$, local system on $C_w$,
and $(\vf_0)_w'$ is the restriction of $(\vf_0)_w$ on $\CE_w'$. 
On the other hand, by the generalized Springer correspondence, 
for each $E \in \CW\wg$, there exists a pair $(C_1, \CE_1)$, where 
$C_1$ is a unipotent class in $G$ and $\CE_1$ is a $G$-equivariant 
simple local system on $C_1$, such that 
\begin{equation*}
\tag{2.1.2}
A_E|_{G\uni} \simeq \IC(\ol C_1, \CE_1)[\dim C_1 + \dim Z^0(L)].
\end{equation*}
Now the pair $(C_1, \CE_1)$ is $F$-stable, and 
$\f_{A_E}: F^*A_E \isom A_E$ determines an isomorphism 
$\p_{\CE_1} : F^*\CE_1 \isom \CE_1$ via (2.1.2) (cf. [L2, V, 24.2]).  
In other words,  
the choice of $\vf_0: F^*\CE' \isom\CE'$ determines $\p_{\CE_1}$.
Let $\x_{(C_1, \CE_1)} : C_1^F \to \Ql$ be the characteristic 
function of $\CE_1$ with respect to $\p_{\CE_1}$.
Lusztig ([L2, V, 24]) gave an algorithm of computing 
$\x_{A_E} = \x_{A_E, \f_{A_E}}$ 
on $G^F\uni$.  Here $\x_{A_E}|_{G\uni^F}$
is expressed in terms of a linear combination of various 
$\x_{(C'_1, \CE'_1)}$, where $(C_1', \CE_1')$ is a pair as above 
corresponding to some $E' \in \CW\wg$. Let $p_{E,E'}$ be the coefficient
of $\x_{(C_1', \CE_1')}$ in the expansion of $\x_{A_E}$.
Then $p_{E,E'}$ satisfies the following property; if we replace $F$ by 
$F^m$ for any integer $m > 0$, we obtain a similar coefficient 
$p_{E,E'}$ (with respect to $G^{F^m}$) starting from
 $\vf_0^{(m)}: (F^m)^*\CE' \isom \CE'$ induced 
naturally from $\vf_0$, which we denote by $p^{(m)}_{E,E'}$.  Then 
there exists a rational function $P_{E,E'}(x)$ such that 
$p^{(m)}_{E,E'} = P_{E,E'}(q^m)$. 
(Note : It is shown in [L2, V] that $P_{E,E'}$ turns out to be a
polynomial if $p$ is good.)
\par
Now $\x_{(C_1,\CE_1)}$ is described as follows;
take $u \in C_1^F$ and put $A_G(u) = Z_G(u)/Z_G^0(u)$.  Then 
$F$ acts naturally on $A_G(u)$, and in our setting $F$ acts trivially
on it. The set of $G^F$-conjugacy classes in $C_1^F$ is in
1:1 correspondence with the set $A_G(u)$ (note: $A_G(u)$ is abelian).
We denote by $a_u$ a representative of a $G^F$-class in $C_1^F$ 
corresponding to $a \in A_G(u)$.  On the other hand, the set of 
$G$-equivariant simple local systems on $C_1$ is in 1:1 correspondence 
with the set $A_G(u)\wg$ of irreducible characters of $A_G(u)$.
For each $\r \in A_G(u)\wg$, we defnie a function $f_{\r}$ on 
$G\uni^F$ by 
\begin{equation*}
\tag{2.1.3}
f_{\r}(v) = \begin{cases}
                \r(a)     &\quad\text{ if } v = u_a \in C_1^F, \\
                  0       &\quad\text{ if } v \notin C_1^F
             \end{cases}
\end{equation*}
for $v \in G\uni^F$.
Let $\r \in A_G(u)\wg$ be the character corresponding to $\CE_1$. 
Then there exists $\e_E \in \Ql^*$ of absolute value 1
such that 
\begin{equation*}
\tag{2.1.4}
\x_{(C_1,\CE_1)} = \e_E f_{\r}.
\end{equation*}
Note that $\e_E$ depends on the choice
of $\vf_0: F^*\CE' \isom \CE'$ and on the choice of $u \in C_1^F$.
We have the following theorem.
%%%
\begin{theorem}[{[S3]}]  %%% Thm 2.2
Let $G$ be a classical group, simple modulo center.  
Assume that the derived subgroup of $G$ does not contain 
the Spin group.  Further assume that $G$ is of split type.
Then for each unipotent class $C_1$
in $G$, there exists $u \in C_1^F$ (called a split unipotent element)
satisfying the following; Let $(C, \CE')$ be the pair in $L$ as in 2.1 
and $u_0 \in C^F$ be a split element.
Choose $\vf_0: F^*\CE' \isom \CE'$ so that the isomorphism 
$(\vf_0)_{u_0} : \CE'_{u_0} \to \CE'_{u_0}$ induced on the stalk 
$\CE'_{u_0}$ of $\CE'$ at $u_0$ is identity.
Choose a split element $u \in C_1^F$ for defining $f_{\r}$ in 
(2.1.3).  Then $\e_E = 1$ for any $E \in \CW\wg$.  
\end{theorem}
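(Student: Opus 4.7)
The plan is to analyze the constants $\e_E$ via the relationship between $\x_{A_E}|_{G\uni^F}$ and the generalized Green functions $Q^G_{L_w, C_w, \CE'_w, (\vf_0)'_w}$. By (2.1.1), each such Green function is a $\Ql$-linear combination of the $\x_{A_E}|_{G\uni^F}$ with coefficients $\Tr(w, V_E)$, and inverting this relation (using orthogonality of the characters of $\CW$) reduces the verification of $\e_E = 1$ to checking an analogous normalization for the generalized Green functions themselves at split representatives $u_a \in C_1^F$.

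First I would construct the split representatives. For $G = Sp_{2n}$ or $SO_N$ (not Spin) of split type, unipotent classes $C_1$ are parametrized by partitions with combinatorial decorations; using normal forms for the underlying bilinear or quadratic form, for each class I would exhibit an $F$-rational representative $u \in C_1^F$ whose centralizer $Z_G(u)$ has a reductive quotient that is a direct product of classical groups of split type over $\Fq$, and such that $F$ acts trivially on $A_G(u)$. Next I would evaluate $Q^G_{L_w, C_w, \CE'_w, (\vf_0)'_w}(u_a)$ at such an element; by the general formula of [L2, II] this value is a trace of the induced Frobenius isomorphism on the stalk of the intersection cohomology complex of $(\ol\vS_w, \CE_w)$ at $u_a$. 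The normalization of $\vf_0$ as identity on $\CE'_{u_0}$ at the split element $u_0 \in C^F$, together with the splitness of the reductive quotient of $Z_G(u)$, forces the induced isomorphism $\p_{\CE_1}$ of (2.1.2) to act as identity on the stalk of $\CE_1$ at $u$; hence $\x_{(C_1, \CE_1)}(u) = 1 = f_\r(u)$, and (2.1.4) then yields $\e_E = 1$ for every $E \in \CW\wg$.

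The main obstacle is the construction of split representatives in characteristic $2$ for $Sp_{2n}$ and $SO_N$. In odd characteristic the required representatives can be produced by an orthogonal direct sum decomposition of the underlying form into $F$-stable Jordan pieces, but in characteristic $2$ the component groups $A_G(u)$ have a more delicate structure (elementary abelian $2$-groups whose ranks are governed by subtle multiplicities of even and odd parts of the partition), and the naive direct-sum construction does not produce representatives for which $F$ acts trivially on $A_G(u)$ while the reductive quotient of $Z_G(u)$ inherits a split $\Fq$-form. Exhibiting such representatives in each class, and verifying that the trace computation on the Springer-fibre cohomology remains normalized to $1$ at these points, is the core technical work and is precisely what is established in [S3].
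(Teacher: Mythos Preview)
The statement you are asked to prove is Theorem~2.2, which in this paper is \emph{not} proved at all: it is quoted from the companion paper~[S3] (``Generalized Green functions and unipotent classes for finite reductive groups, II''), and the present paper uses it as a black box. So there is no proof here to compare your attempt against.

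Your proposal is a reasonable high-level outline of the strategy one expects [S3] to follow: construct, for each unipotent class, an explicit $F$-rational representative with good splitness properties for $A_G(u)$ and the reductive quotient of $Z_G(u)$, and then check that the normalization of $\vf_0$ at $u_0$ propagates through the generalized Springer correspondence to give $\e_E=1$. You correctly identify the characteristic~$2$ case as the delicate one, and you yourself acknowledge in your final paragraph that the actual construction and verification constitute the substance of~[S3]. In other words, your proposal is not a self-contained proof but a summary of what needs to be done, deferring the real work to~[S3]---which is exactly what the present paper does as well.
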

%%%
\para{2.3.}
Returning to the setting in 2.1, we choose a split 
element $u \in C_1^F$ for each unipotent class $C_1$ of $G$.
   Then $u \in C_1^{F^m}$ for any integer
$m > 0$, (in fact, it is a split element with respect to $G^{F^m}$), 
and we choose $u^{(m)}_a \in C_1^{F^m}$,  
a representative of $G^{F^m}$-class in $C_1^{F^m}$ for each 
$a \in A_G(u)$.
For later discussion, we prepare a notation.   
Assume given a family of functions 
$h = \{ h^{(m)}\}_{m>0}$, where $h^{(m)}$ is a class function on $G^{F^m}\uni$.
Then we say that $h$ is a rational function in $q$ if there exists 
a rational function $H_{C_1,a}(x)$ for each pair $(C_1,a)$ 
such that $h^{(m)}(u_a^{(m)}) = H_{C_1,a}(q^m)$. 
\par
For each $E \in \CW\wg$, we have an isomorphism 
$\f^{(m)}_{A_E}: (F^m)^*A_E \isom A_E$, and one can define 
a function $\x^{(m)}_{A_E} = \x_{A_E, \f^{(m)}_{A_E}}$ on $G^{F^m}$.
Then in view of Theorem 2.2, Lusztig's algorithm implies that 
$\{ \x^{(m)}_{A_E}|_{G^{F^m}\uni}\}_{m >0}$ 
is a rational function in $q$.
\par
Thus, by (2.1.1) we have the following corollary.
%%%
\begin{cor}  %%% Cor 2.4
Assume that $G$ is as in Theorem 2.2, and that $G^F$ is of split 
type.  The generalized Green function 
$Q^G_{L_w, C_w, \CE'_w, (\vf_0)'_w}$ can be expressed as a rational
function in $q$. 
\end{cor}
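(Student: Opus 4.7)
The plan is to deduce the corollary directly from (2.1.1), together with the remark made immediately before it in 2.3 that for each $E \in \CW\wg$ the family $\{\x^{(m)}_{A_E}|_{G^{F^m}\uni}\}_{m>0}$ is a rational function in $q$. Since the restriction of $\x_{K_w,\vf_w}$ to $G^F\uni$ is by definition the generalized Green function in question, (2.1.1) gives
\begin{equation*}
Q^G_{L_w, C_w, \CE'_w, (\vf_0)'_w} = \sum_{E \in \CW\wg} \Tr(w, V_E)\,\x_{A_E}\big|_{G^F\uni},
\end{equation*}
and repeating the construction over $\FF_{q^m}$ for every $m \ge 1$ yields the analogous identity for $G^{F^m}$, with the \emph{same} trace coefficients $\Tr(w, V_E)$, which are algebraic numbers independent of $m$. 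So it suffices to verify the rationality of each $\x^{(m)}_{A_E}|_{G^{F^m}\uni}$.

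For this I would unwind Lusztig's algorithm as recalled in 2.1: there is a fixed expansion $\x_{A_E}|_{G^F\uni} = \sum_{E'} p_{E,E'}\,\x_{(C_1', \CE_1')}$ whose coefficients specialize as $p^{(m)}_{E,E'} = P_{E,E'}(q^m)$ for a rational function $P_{E,E'}$ that does not depend on $m$. By (2.1.4) one has $\x_{(C_1', \CE_1')} = \e_{E'} f_{\r'}$, and $f_{\r'}$ evaluated at the standard representative $u_a^{(m)}$ returns the character value $\r'(a)$, which lies in a fixed cyclotomic field and is manifestly independent of $m$.

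The main (and essentially only) obstacle is ensuring that the scalar $\e_{E'}$ in (2.1.4) is $1$ for every $m$ simultaneously. Here the hypothesis that $G$ is as in Theorem 2.2 and of split type is crucial: one fixes once and for all a split representative $u \in C_1^F$ (which remains split in $C_1^{F^m}$ for every $m$), a split element $u_0 \in C^F$ of $L$, and a single isomorphism $\vf_0 : F^*\CE' \isom \CE'$ acting as the identity on $\CE'_{u_0}$. The induced isomorphisms $\vf_0^{(m)}$ over $\FF_{q^m}$ inherit this normalisation, so Theorem 2.2 applied to $G^{F^m}$ forces $\e_{E'} = 1$ for every $m$ and every $E'$. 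Combining this with the previous paragraph, $\x^{(m)}_{A_E}(u_a^{(m)}) = \sum_{E'} P_{E,E'}(q^m)\,\r'(a)$ is a rational function of $q^m$, and taking the $\Tr(w, V_E)$-linear combination across $E$ then yields the corollary.
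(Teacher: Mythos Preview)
Your proposal is correct and follows exactly the approach the paper takes: the paper simply records in 2.3 that, in view of Theorem 2.2, Lusztig's algorithm makes each $\{\x^{(m)}_{A_E}|_{G^{F^m}\uni}\}_{m>0}$ a rational function in $q$, and then states the corollary as an immediate consequence of (2.1.1). You have unwound those two sentences in full detail (the expansion $\x_{A_E} = \sum_{E'} p_{E,E'}\x_{(C'_1,\CE'_1)}$, the specialization $p^{(m)}_{E,E'} = P_{E,E'}(q^m)$, and the use of Theorem 2.2 over each $\BF_{q^m}$ to force $\e_{E'}=1$), but the logic is identical to the paper's.
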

\para{2.5.}
More generally, if there exists a family of values 
$h = \{ h^{(m)} \in \Ql\}_{m > 0}$ such that $h^{(m)} = H(q^m)$ for
some rational function $H(x)$, we say that $h$ is a rational function 
in $q$. 
%%%
%%%
\section{Cuspidal character sheaves}
\para{3.1.}
Let $G$ be an adjoint simple group of classical type.
We assume that $G$ is of split type over $\Fq$.
Let $\wh G^0$ be the set of cuspidal character sheaves on $G$.
$A \in \wh G^0$ is given in the form $A = \IC(\ol C, \CE)[\dim C]$, where
$C$ is a conjugacy class in $G$, and $\CE$ is a simple $G$-equivariant 
local system on $C$. 
We shall describe the cuspidal character sheaves on $G$
(cf. [L2, V, 22.2, 23.2], see also [S2, 6.6]) and their mixed structures.
\par\medskip
(a) \ $G = PSp_{2n}$ ($n \ge 1$) with $p: $ odd.
$\wh G^0$ is empty if $n$ is even.  Assume that $n$ is odd.
Then for each pair $(N_1, N_2)$ such that $N_i = d_i^2 + d_i$
for some integers $d_i \ge 0$ and that $n = N_1 + N_2$, one 
can associate cuspidal character sheaves on $G$ as follows.   
Let $C$ be a conjugacy class 
of $g = su = us$, where $s$ is a semisimple element of $G$ 
such that $Z^0_G(s)$ is isomorphic 
to $H = (Sp_{2N_1} \times Sp_{2N_2})/\{ \pm 1\}$, and 
$u$ is a unipotent element of $Z^0_G(s) \simeq H$ such that the  
unipotent class $C_0$ containing $u$ gives a unique cuspidal pair 
$(C_0, \CE_0)$ with unipotent support of $H$.  
Here $(C_0, \CE_0)$ is described as follows.
There exists a  cuspidal pair $(C_i, \CE_i)$ for $Sp_{2N_i}$
such that $C_0 = C_1\times C_2$ and $\CE_0 = \CE_1\boxtimes \CE_2$.
Choose $u = (u_1, u_2) \in C_0$ such that $u_i \in C_i$.  
Let $\r_i \in A_{H_i}(u_i)\wg$ corresponding to $\CE_i$, where 
$H_i = Sp_{2N_i}$.  Then 
$\r_1\boxtimes\r_2 \in (A_{H_1}(u_1) \times A_{H_2}(u_2))\wg$ 
factors through $A_H(u)$ and defines an irreducible character $\r_0$
of $A_H(u)$ corresponding to $\CE_0$.
\par
Now assume that $N_1 \ne N_2$.  Then $Z_G(s)$ is connected, and so
$A_G(g) = A_H(u)$, and $\r_0$ gives an irreducible 
character $\r \in A_G(g)\wg$ which determines a local system $\CE$ 
on $C$, and we denote by $A_{N_1,N_2}$ the character sheaf corresponding
to $(C,\CE)$. 
Next assume that $N_1 = N_2$.  Then $A_G(s) \simeq \BZ/2\BZ$ and 
$A_H(u)$ is a subgroup of $A_G(g)$ of index 2.   
We have $\Ind_{A_H(u)}^{A_G(g)}\r_0 = \r + \r'$, where 
$\r, \r'$ are linear characters of $A_G(u)$.
If we write $\CE, \CE'$ the simple local system on $C$ corresponding 
to $\r,\r'$, then the pairs $(C, \CE), (C, \CE')$ are both cuspidal
pairs of $G$.  We denote by $A_{N_1, N_2}, A'_{N_1, N_2}$ the 
cuspidal character sheaves on $G$ corresponding to 
$(C,\CE), (C,\CE')$, respectively.  
The set $\wh G^0$ consists of these elements.
\par
We shall fix a mixed structure on $(C,\CE)$.    
Since $s \in G^F$, $H$ is $F$-stable, and so $(C_0, \CE_0)$ 
is also $F$-stable. Choose $u = (u_1, u_2) \in C_0^F$ such that 
$u_i$ are split elements in $Sp_{2N_i}$, and fix $s \in T^F$ 
appropriately.
We choose $\vf_0: F^*\CE \isom \CE$ so that the induced 
isomorphism $(\vf_0)_g: \CE_g \to \CE_g$ on the stalk $\CE_g$ at
$g$ is identity.  Then $\vf_0$ induces an isomorphism 
$\vf: F^*A_{N_1, N_2} \isom A_{N_1,N_2}$.  We define 
$\f_A = \f_{A_{N_1, N_2}}$ by $\f_A = q^{(\dim G - \dim C)/2}\vf$.
A similar construction is applied also for $(C,\CE')$.   
\par\medskip
(b) \ $G = PSO_m$ ($m \ge 3$) with $p:$ odd.
$\wh G^0$ is empty unless $m$ is either odd or divisible by 8.
Note that $PSO_m = SO_m$ if $m$ is odd. 
To each pair $(N_1, N_2)$ such that $N_i = d_i^2$ for some $d_i \ge 1$
 and that 
$m = N_1 + N_2$, one can associate cuspidal character sheaves
$A$ associated to $(C, \CE)$ as follows. 
Let $C$ be the conjugacy class of $G$ containing $g = su = us$, where
$s$ is a semisimple element such that $H = Z_G^0(s)$ is isomorphic to
$SO_{N_1}\times SO_{N_2}$ if $m$ is odd, and to
$(SO_{N_1}\times SO_{N_2})/\{ \pm 1\}$ if $m$ is even, 
and $u$ is a unipotent element in $Z_G^0(s) \simeq H$ 
such that the unipotent class
$C_0$ containing $u$ gives a unique cuspidal pair 
$(C_0, \CE_0)$ with unipotent support on $H$.
Here $C_0 = C_1\times C_2$, $\CE_0 \simeq \CE_1\boxtimes \CE_2$
with the cuspidal pair $(C_i, \CE_i)$ on $SO_{N_i}$.
Choose $u = (u_1, u_2) \in C_0$ such that $u_i \in C_i$.  
Let $\r_i \in A_{H_i}(u_i)\wg$ corresponding to $\CE_i$, where
$H_i = SO_{N_i}$.
Then $\r_1\boxtimes\r_2 \in (A_{H_1}(u_1) \times A_{H_2}(u_2))\wg$
factors through $A_H(u)$ and gives an irreducible character 
$\r_0 \in A_H(u)\wg$ corresponding to $\CE_0$.
Depending on the structure of $A_G(s)$, the three cases occur.
\par
(i) \ The case where $N_1 = 0$ or $N_2 = 0$.   In this case, 
$Z_G(s)$ is connected and so $A_G(u) \simeq A_H(u)$.
$\r_0$ gives $\r \in A_G(g)\wg$, which determines a local 
system $\CE$ on $C$, and $(C,\CE)$ corresponds to the cuspidal
character sheaf $A_{N_1, N_2}$.
\par
(ii) \ The case where $N_1 > 0, N_2 > 0, N_1 \ne N_2$.
Then $A_G(s) \simeq \BZ/2\BZ$, and $A_H(u)$ is regarded as 
an index 2 subgroup
of $A_G(g)$.  We have $\Ind_{A_H(u)}^{A_G(g)}\r_0 = \r + \r'$ for 
$\r,\r' \in A_G(g)\wg$.
If we write $\CE, \CE' $ the simple local system corresponding to
$\r,\r'$, $(C, \CE), (C, \CE')$ are both cuspidal pairs for $G$.
We denote by $A_{N_1, N_2}, A'_{N_1, N_2}$ the cuspidal character 
sheaves corresponding to them.
\par
(iii) \ The case where $N_1 = N_2$. In this case 
$A_G(s) \simeq \BZ/2\BZ \times \BZ/2\BZ$ and so 
$A_G(g)/A_H(u) \simeq \BZ/2\BZ\times \BZ/2\BZ$. 
$\Ind_{A_H(u)}^{A_G(g)}\r_0$ decomposes into 
4 irreducible (linear) characters, $\r, \r', \r'', \r'''$ of 
$A_G(g)$.  Correspondingly, we have simple local systems 
$\CE, \CE', \CE'', \CE'''$ on $C$, and all of them give
cuspidal pairs on $G$. 
We denote by $A_{N_1, N_2}$,$ A'_{N_1, N_2}$, 
$A''_{N_1, N_2}$, $A'''_{N_1, N_2}$ 
the cuspidal character sheaves 
corresponding to them.
\par
All of the above three cases give the set $\wh G^0$.
We shall fix a mixed structure on cuspial character sheaves.
Since $s \in G^F$, $H$ is $F$-stable, and so $(C_0,\CE_0)$
is $F$-stable.
Take $u = (u_1, u_2) \in C_0^F$ such that $u_i$
are split elements in $SO_{N_i}$, and fix $s \in T^F$.  We choose 
$\vf_0: F^*\CE \isom \CE$ so that the induced isomorphism 
$(\vf_0)_g: \CE_g \to \CE_g$ on the stalk $\CE_g$ at $g$ is
identity.  
$\vf_0$ induces an isomorphism $\vf: F^*A \isom A$ for 
$A = A_{N_1,N_2}$. We define $\f_{A}$ by 
$\f_A = q^{(\dim G - \dim C)/2}\vf$.  We define similarly
for $A'_{N_1,N_2}, A''_{N_1,N_2}, A'''_{N_1,N_2}$.
\par\medskip
(c) $G = Sp_{2n}$ ($n \ge 1$) with $p = 2$.
$\wh G^0$ is empty unless $n = d^2 + d$ for some $d \ge 1$.
Assume that $n = d^2 + d$.  Then $G$ contains a unique cuspidal 
pair $(C,\CE)$.  The set $\wh G^0$ consists of a single 
character sheaf $A$ associated to $(C,\CE)$.
We fix a mixed structure of $A$.  $C$ is an $F$-stable unipotent
class of $G$, and we take a split element $u \in C^F$.
We fix an isomorphism $\vf_0: F^*\CE \isom \CE$ so that 
the induced isomorphism $(\vf_0)_u: \CE_u \to \CE_u$ on the 
stalk $\CE_u$ of $\CE$ at $u$ is identity. 
$\vf_0$ induces $\vf : F^*A \isom A$.  We define $\f_A$ by 
$\f_A = q^{(\dim G - \dim C)/2}\vf$.
\par\medskip
(d) $G = SO_{2n}$ with $p = 2$ ($n \ge 1$). 
$\wh G^0$ is empty unless $n = 4d^2$ for some $d \ge 1$.
Assume that $n = 4d^2$. Then $G$ contains a unique cuspidal 
pair $(C,\CE)$.  The set $\wh G^0$ consists of a single 
character sheaf $A$ associated to $(C,\CE)$.
$C$ is an $F$-stable unipotent
class of $G$, and we take a split element $u \in C^F$.
We define $\f_A$ in a similar way as in the case (c), by 
$\f_A = q^{(\dim G - \dim C)/2}\vf$.
\par\medskip
We show the following lemma.
%%%
\begin{lemma}  %%% Lemma 3.2
Let $\r$ be the irreducible character of $A_G(g)$ corresponding 
to the local system $\CE$ on $C$, as in 3.1.  Then 
$\r$ is a linear character such that $\r^2 = 1$.
A similar fact holds also for $\r', \r'', \r'''$ if there exists any. 
\end{lemma}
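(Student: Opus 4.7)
The plan is to deduce both claims from the structural statement that $A_G(g)$ is an elementary abelian $2$-group, together with the observation (implicit in the construction of 3.1) that $\r$ is linear in every case. Granted these, $\r^2(a) = \r(a)^2 = \r(a^2) = \r(1) = 1$ for every $a \in A_G(g)$, so $\r^2$ is the trivial character.

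The linearity of $\r$ is recorded in 3.1 itself. In the connected-centralizer cases (a) with $N_1 \ne N_2$ and (b)(i), one has $A_G(g) = A_H(u)$ and $\r = \r_0 = \r_1 \boxtimes \r_2$; in cases (c), (d), $A_G(g) = A_G(u)$ and $\r$ is the character attached to the unique cuspidal local system; in the remaining subcases of (a) and (b), $\r$ is one of the listed linear components of $\Ind_{A_H(u)}^{A_G(g)}\r_0$. The linearity of the factors $\r_i$ corresponding to the cuspidal local systems on $Sp_{2N_i}$ and $SO_{N_i}$ is a consequence of Lusztig's classification of cuspidal pairs in [L2].

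For the structural claim, I would treat cases (a), (b) via the exact sequence
\[
1 \to A_H(u) \to A_G(g) \to Q \to 1,
\]
where $Q$ is the image of $A_G(g)$ in $A_G(s)$. Both ends are elementary abelian $2$-groups: $A_H(u)$ is a subquotient of $A_{H_1}(u_1) \times A_{H_2}(u_2)$, and each $A_{H_i}(u_i)$ is elementary abelian $2$ by Lusztig's description of the component groups of cuspidal unipotent classes in $Sp_{2N_i}$ and $SO_{N_i}$; while $Q \subset A_G(s)$, which 3.1 records as one of $1$, $\BZ/2\BZ$, or $\BZ/2\BZ \times \BZ/2\BZ$. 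To upgrade this to exponent $2$ for $A_G(g)$ itself, I would exhibit involutive lifts in $Z_G(g)$ of the generators of $Q$: the swap element interchanging the isomorphic factors $H_1 \cong H_2$ in the case $N_1 = N_2$, chosen of order $2$ because the split representatives $u_1, u_2$ correspond under the isomorphism; and analogously an involutive lift of the outer orthogonal element. For the characteristic-$2$ cases (c), (d), $g = u$ and $A_G(u)$ is directly elementary abelian $2$ by the Spaltenstein--Lusztig tabulation of unipotent classes and component groups in $Sp_{2n}$ and $SO_{2n}$ over characteristic $2$.

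The main obstacle is the order-$2$ lifting step in the cases where $Q \neq 1$ (case (a) with $N_1 = N_2$ and subcases (ii), (iii) of (b)): without it, the argument only yields that $A_G(g)$ is abelian of exponent dividing $4$, which is insufficient to force $\r^2 = 1$. Overcoming this requires explicit construction of involutive representatives inside $Z_G(g)$, exploiting the matching of the split $u_i$ under the isomorphism of factors and the compatibility with the outer orthogonal involution.
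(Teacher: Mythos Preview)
Your plan has a genuine gap in the $N_1 = N_2$ cases, and it is precisely the obstacle you flag. The group $A_G(g)$ is \emph{not} an elementary abelian $2$-group there, so no amount of choosing involutive lifts will rescue the argument. In the $PSp_{2n}$ case with $N_1 = N_2$ the paper records $A_G(g) \simeq \langle\sigma\rangle \ltimes A_H(u)$, where $\sigma$ has order $2$ and swaps the two factors of $A_H(u)$. Since this action is nontrivial the extension is not central; the group is not even abelian, and it contains elements of order $4$: if $a = \sigma\cdot(x,y)$ with $x \ne y$, then $a^2 = (xy,xy) \ne 1$. Having an order-$2$ lift of the generator of $Q$ (namely $\sigma$ itself) does not force exponent $2$ on the whole group. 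The same phenomenon occurs in subcase~(b)(iii) for $PSO_{2n}$, with $\widetilde A_H(u)$ in place of $A_H(u)$.

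The paper's proof therefore does not try to bound the exponent of $A_G(g)$. Instead it shows $\rho(a^2) = 1$ directly. For $N_1 \ne N_2$ this is immediate since $A_G(g)$ is elementary abelian $2$. For $N_1 = N_2$, any $a$ of order $4$ has $a^2 \in A_H(u)$, and one exploits that each $\rho$ in question is a constituent of $\theta = \Ind_{A_H(u)}^{A_G(g)}\rho_0$. The $\sigma$-stability of $\rho_0$ forces $\rho_1 = \rho_2$, whence $\rho_0(a^2) = \rho_1(xy)^2 = 1$; the induced-character formula then gives $\theta(a^2) = [A_G(g):A_H(u)]$, and since $\theta$ is a sum of that many linear characters each of absolute value $1$, every constituent must satisfy $\rho(a^2) = 1$. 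This use of the specific inducing data is the mechanism your argument is missing.
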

\begin{proof}
Let $\r$ be one of the characters $\r, \r', \r'', \r'''$ if 
there exists any.  
It is enough to show that $\r(a^2) = 1$ for any $a \in A_G(g)$.
By investigating the structure of 
$A_G(g)$, we see that $A_G(g)$ is an elementary abelian 2-group
if $N_1 \ne N_2$.  Thus in this case, $\r(a^2) = 1$.
We assume that $N_1 = N_2$.  Then $p$ is odd, and $G$ is $PSp_{2n}$
or $PSO_{2n}$.  Assume that $G = PSp_{2n}$.  We have 
$A_G(g) \simeq \lp \s \rp \ltimes A_H(u)$, where 
$\s$ is an element of order 2 permuting two factors of $A_H(u)$. 
In this case $a \in A_G(g)$ is of order 2 or 4.  
If $a$ has order 2, there is nothing to prove. Assume that 
$a$ has order 4.  Then we have $a^2 \in A_H(u)$.  
Put $\th = \Ind_{A_H(u)}^{A_G(g)}\r_0$. 
Since $A_H(u)$ is an elementary abelian 
2-group, and $\r_0$ is $\s$-stable, 
we see that $\th(a^2) = |A_G(g)|/|A_H(u)|$.
This shows that $\r(a^2) = 1$ for any irreducible factor $\r$ of 
$\th$.  Next assume that $G = SO_{2n}$.  In this case, 
$A_G(g) \simeq \lp\s\rp\ltimes \wt A_H(u)$, where $\wt A_H(u)$ is
an elementary abelian 2-group containing $A_H(u)$ as an index 2
subgroup, and $\s$ is an element of order 2 acting on $\wt A_H(u)$.
$\s$ stabilizes $A_H(u)$ permuting their two factors, and  $\r_0$ 
is $\s$-stable.  Thus a similar argument shows that $\r(a^2) = 1$.
The lemma is proved.
\end{proof}
%%%
%%%
\section{Symbols and unipotent characters}
\para{4.1.}
Irreducible characters contained in $\CE(G^F, \{1\})$ are called 
unipotent characters. In the case of classical groups, unipotent
characters are parametrized by a combinatorial object called symbols.
In this section, we review unipotent characters of classical groups.
\par
Let $G$ be a classical group over $\Fq$ of type $B_n, C_n$ or $D_n$.  
We assume that $G^F$ is of split type if $G$ is of type $D_n$.
The set of unipotent characters of $G^F$
is parameterized by symbols. A symbol is an (unordered) pair 
$\binom{S}{T}$ of finite
subsets of $\{0,1,2,\dots\}$ modulo the shift operation 
$\binom{S}{T} \sim \binom{S'}{T'}$ with $S' = \{0\} \cup (S+1)$, 
$T' = \{ 0\} \cup (T+1)$.  The rank of a symbol $\vL = \binom{S}{T}$
is defined by 
\begin{equation*}
r(\vL) = \sum_{\la \in S}\la + \sum_{\mu \in T}\mu
- \bigg[ \bigg( \frac{|S| + |T| -1}{2}\bigg)^2\bigg], 
\end{equation*}
where $[z]$ denotes the largest 
integer which does not exceed $z$. The defect $d(\vL)$ of $\vL$ is
defined by the absolute value of $|S| - |T|$.  
The rank and the defect are independent of the shift operation. 
\par
For each integer $d \ge 0$, 
we denote by $\F_n^d$ the 
set of symbols of rank $n$ and defect $d$. 
In the case where $\vL = \binom{S}{T}$ is defect 0, $\vL$
is said to be degenerate if $S = T$, and is said to be 
non-degenerate otherwise. 
We denote by $\wt\F^0_n$ the set of symbols of rank $n$ and 
defect 0, where the degenerate symbols are counted twice. 
We put
\begin{equation*}
\F_n = \coprod_{d : \odd}\F^d_n, \qquad
\F^+_n = \wt\F^0_n \coprod 
      \big(\coprod_{d \equiv 0 \pmod 4}\F^d_n\big).
\end{equation*}
Then the unipotent characters of $G^F$ of type $B_n$ or $C_n$ 
(resp. $D_n$ of split type) are parametrized by $\F_n$ 
(resp. $\F^+_n$).  In the notation of 1.3, $W_s = W$ and 
$\g = 1$ since $F$ is of split type, and $\F_n$ or 
$\F_n^+$ is nothing but $\ol X(W_s, \g) = \ol X(W, 1)$.  
We denote by $\r_{\vL}$ the unipotent
character of $G^F$ corresponding to $\vL \in \F_n$ or $\F^+_n$.  
The unipotent cuspidal character exists if and only if $n = d^2 + d$
(resp. $n = 4d^2$) for some integer 
$d \ge 1$ if $G$ is of type $B_n$ or $C_n$ (resp. $D_n$).  
In these cases, the symbol $\vL_c$ (the cuspidal symbol) corresponding 
to the (unique) cuspidal unipotent character is given as follows. 
\begin{equation*}
\begin{aligned}
\vL_c &= \binom{ 0,1,2, \dots, 2d}{-} \in \F^{2d+1}_n 
&\quad (&G : \text{ type $B_n$ or $C_n$}, n = d^2 + d), \\
\vL_c &= \binom{0,1,2, \dots, 4d-1}{-} \in \F^{4d}_n
&\quad (&G : \text{ type $D_n$}, n = 4d^2).
\end{aligned}
\end{equation*}
%%%
\para{4.2.}
We introduce a notion of families in $\F_n$ or $\F_n^+$. 
Two symbols $\vL, \vL'$ belong to the same family if $\vL, \vL'$ 
are represented by $\binom{S}{T}, \binom{S'}{T'}$ such that 
$S\cup T = S'\cup T'$ and that $S\cap T = S' \cap T'$.
Families give a partition of $\F_n$ or $\F^+_n$.
A symbol $\vL \in \F_n$ of defect 1 is called a special symbol if 
$\vL = \binom{S}{T}$ with $S = \{ a_0, a_1, \dots, a_m \}$ 
and $T = \{ b_1, \dots, b_m \}$  such that 
$a_0 \le b_1 \le a_1 \le \cdots \le b_m \le a_m$.
Similarly, a symbol $\vL \in \F^+_n$ of defect 0 is called 
a special symbol if 
$\vL = \binom{S}{T}$ with 
$S = \{ a_1, \dots, a_m \}$, $T = \{ b_1, \dots, b_m\}$
such that $a_1 \le b_1 \le \cdots \le a_m \le b_m$.
Each family contains a unique special symbol.
Let $\CF$ be a (non-degenerate) family.  Then any symbol $\vL \in \CF$ 
can be expressed as 
\begin{equation*}
\vL = \vL_M = \binom{Z_2 \coprod (Z_1 - M)}{Z_2 \coprod M},
\end{equation*}
for some $M$, where $Z_1, Z_2$ are determined by $\CF$; 
 $Z_2$ is the set of elements which appear in both rows of 
$\vL$, $Z_1$ is the set of singles in $\vL$, and $M$ is a subset 
of $Z_1$.  The map $M \mapsto \vL_M$ gives a bijective 
correspondence between the set of subsets $M$ of $Z_1$ such that
$|M| \equiv d_1 \pmod 2$ and $\CF$, where $|Z_1| = 2d_1+1$ 
(resp. $|Z_1| = 2d_1$) if $\CF \subset \F_n$ 
(resp. $\CF \subset \F_n^+$) for some integer $d_1 \ge 1$. 
(In the case of $\CF \subset \F_n^+$, we further assume that 
the smallest element in $M$ is bigger that that of $Z_1 - M$.)
In particular, the special symbol in $\CF$ can be written 
as $\vL_{M_0}$ for some $M_0 \subset Z_1$ such that 
$|M_0| = d_1$. 
For $M \in Z_1$, put $M^{\sharp} = M_0 \cup M - M_0 \cap M$.
We define 
a pairing $\{ \ , \ \} : \CF \times \CF \to \BQ$
by 
\begin{equation*}
\tag{4.2.1}
\{ \vL_M, \vL_{M'}\} = 
   \frac{1}{2^{f}}(-1)^{|M^{\sharp} \cap {M'}^{\sharp}|}, 
\end{equation*}
where $f = d_1$ (resp. $f = d_1-1$) if $\CF \subset \F_n$
(resp. $\CF \subset \F_n^+$).
We extend this pairing to the pairing on $\F_n$ or $\F_n^+$ by requiring 
that $\CF$ and $\CF'$ are orthogonal if $\CF \ne \CF'$, which we
denote by the same symbol.
Note that the pairing $\{ \ ,\ \}$ on $\F_n$ or $\F_n^+$ coincides 
with the pairing $\{ \ ,\ \}$ on $\ol X(W,1)$ given in 1.3. Hence, 
for each $\vL \in \CF$, the almost character $R_{\vL}$ is given as 
\begin{equation*}
\tag{4.2.2}
R_{\vL} = \sum_{\vL' \in X_n}\{ \vL, \vL'\}\r_{\vL'},
\end{equation*}
where $X_n = \F_n$ or $\F_n^+$ according to the cases $G$ is of
type $B_n$ or $C_n$, or $G$ is of type $D_n$.
By the property of the pairing $\{ \ , \ \}$, one can also write,
for each $\vL \in \CF$, 
\begin{equation*}
\tag{4.2.3}
\r_{\vL} = \sum_{\vL' \in X_n}\{ \vL, \vL'\}R_{\vL'}.
\end{equation*}
%%%
\para{4.3.}
Assume that $G^F$ contains a cuspidal unipotent character, and 
denote by $\CF_c$ the family containing the cuspidal 
symbol $\vL_c$.  Then the special symbol $\vL_0$ contained 
in $\CF_c$ is given as follows. 
\begin{equation*}
\begin{aligned}
\vL_0 &= \binom{0,2,4, \dots, 2d}{1,3,\dots,2d-1 }
&\qquad (&G: \text{type $B_n$ or $C_n$}, n = d^2 +d ), \\
\vL_0 &= \binom{0,2, \dots, 4d-2}{1,3,\dots,4d-1 }
&\qquad (&G: \text{ type $D_n$}, n = 4d^2). 
\end{aligned}
\end{equation*}
In the case where $G$ is of type $B_n$ or $C_n$, 
we have $Z_1 = \{ 0,1, \dots, 2d\}$ and 
$M_0 = \{ 1,3, \dots, 2d-1\}$.
In the case where $G$ is of type $D_n$, 
we have
$Z_1 = \{ 0,1, \dots, 4d-1\}$ and $M_0 = \{ 1,3, \dots, 4d-1\}$.
In both cases, $\vL_c$ is given by $\vL_c = \vL_M$ with 
$M = \emptyset$.
We denote by $R_0 = R_{\vL_c}$ the cuspidal almost character.
\para{4.4.}
Let $A_0$ be the cuspidal character sheaf of $G$ 
corresponding to $R_0 = R_{\vL_c}$.
Let $(C,\CE)$ be the cuspidal pair corresponding to $A_0$.  Then 
it is contained in the list in 3.1. If $p = 2$, it is 
uniquely determined since $\wh G^0$ consits of a single 
element. In the case where $p \ne 2$, the explicit 
correspondence is known 
by [S2, Prop. 6.7], see also [L3]. 
The conjugacy class is given as follows  
(though we don't need it in later discussions).  
We use the notation in 3.1.  Let $g = su = us \in C$ with 
$H = Z^0_G(s)$.  Assume that $G = PSp_{2n}$ with 
$n = d^2 +d$.  Then $H$ is isogeneous to 
$Sp_{d^2+d} \times Sp_{d^2+d}$. 
Assume that $G = PSO_{2n+1}$ with $n = d^2+d$.  
Then $H$ is isogenous to $SO_{(d+1)^2}\times SO_{d^2}$.
Assume that $G = PSO_{2n}$ with $n = 4d^2$.  Then
$H$ is isogeneous to $SO_{4d^2}\times SO_{4d^2}$.

\section{Symmetric space over finite fields}
In this section, we apply the theory of symmetric space over 
finite fields to the problem of determining the scalars $\z_x$
occurring in Lusztig's conjecture (Theorem 1.5)] for $G^{F^2}$.
%%%
\para{5.1.}
Let $G$ be a connected reductive group over a finite field 
$\Fq$ with Frobenius map $F$.
We consider the symmetric space $G^{F^2}/G^F$.
For a class function $f$ on $G^{F^2}$, we define $m_2(f)$ by 
\begin{equation*}
m_2(f) = \lp \Ind_{G^F}^{G^{F^2}}1, f\rp_{G^{F^2}}
       =  \frac{1}{|G^F|}\sum_{x \in G^F}f(x).
\end{equation*}
In the case where $G$ has a connected center, $m_2(\r)$ is 
determined by Kawanaka [K], Lusztig [L4] 
for any irreducible character $\r$ of $G^{F^2}$.
\para{5.2.}
Let $C$ be an $F$-stable  conjugacy class in $G$.  Take 
$x \in C^{F}$ and let $A_G(x)$ be the 
component group of $Z_G(x)$ as before.  $F$ acts naturally on $A_G(x)$.
We assume that $F$ acts trivially on $A_G(x)$.  
Then the set of $G^{F}$-conjugacy classes in $C^{F}$ is 
in bijection with the set $A_G(x)\ssim$ of conjugacy classes in 
$A_G(x)$.  The correspondence is given as follows;  for each 
$a \in A_G(x)$, take a representative $\da \in Z_G(x)$.  
There exists $h_a \in G$ such that $h_a\iv F(h_a) = \da$. 
Then $x_a = h_a x h_a\iv$ in contained in $C^F$, and 
the set $\{ x_a \mid a \in A_G(x)\ssim \}$ gives a complete 
set of representatives of the $G^{F}$-conjugacy classes in 
$C^{F}$.  
\par
The above description works also for the case of $C^{F^2}$. 
We denote by $\{ y_a \mid a \in A_G(x)\ssim \}$ the set of 
$G^{F^2}$-conjugacy classes in $C^{F^2}$.   
We define a class function $f_{\t}$ on $G^{F^2}$ for each 
$\t \in A_G(x)\wg$ as follows.
\begin{equation*}
\tag{5.2.1}
f_{\t}(g) = \begin{cases}
                \t(a) &\quad
  \text{ if $g$ is $G^{F^2}$-conjugate to $y_a$},  \\
                 0    &\quad\text{ if } g \notin C^{F^2}.
            \end{cases}
\end{equation*}
We have the following lemma.
%%%
\begin{lemma}  %%% Lemma 5.3
Let $\t \in A_G(x)\wg$ be a linear character such that 
$\t^2 = 1$.  Then we have
\begin{equation*}
m_2(f_{\t}) = |C^F|/|G^F|.
\end{equation*}
\end{lemma}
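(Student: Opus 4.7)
The plan is to reduce $m_2(f_\t)$ to a sum over $C^F$ and then show that $f_\t$ is identically $1$ on $C^F$. Since $f_\t$ is supported on $C^{F^2}$, one has
\begin{equation*}
m_2(f_\t) = \frac{1}{|G^F|}\sum_{x \in G^F \cap C^{F^2}} f_\t(x).
\end{equation*}
I would first observe that $G^F \cap C^{F^2} = C^F$: any $x \in G^F$ is automatically fixed by $F^2$, and if such $x$ lies in the $F$-stable class $C$ then $F(x) = x$ places it in $C^F$. So the problem becomes computing $f_\t(x)$ for $x \in C^F$.

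Next I would identify, for each representative $x_a = h_a x_0 h_a\iv$ of a $G^F$-class in $C^F$ (with $h_a\iv F(h_a) = \da$ as in 5.2), the $G^{F^2}$-class in $C^{F^2}$ to which it belongs. Writing an arbitrary $k \in G$ with $k x_0 k\iv = x_a$ as $k = h_a z$ for some $z \in Z_G(x_0)$, a short computation yields
\begin{equation*}
k\iv F^2(k) = z\iv \da F(\da) F^2(z),
\end{equation*}
whose image in $A_G(x_0)$ is the conjugacy class of $a \cdot F(a)$. Because $F$ acts trivially on $A_G(x_0)$, this class is just that of $a^2$. Hence $x_a$ is $G^{F^2}$-conjugate to $y_{a^2}$, and by the definition (5.2.1) of $f_\t$ we get $f_\t(x_a) = \t(a^2) = \t(a)^2$.

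The hypothesis $\t^2 = 1$ then gives $f_\t(x_a) = 1$ for every $a \in A_G(x_0)\ssim$, so $f_\t \equiv 1$ on $C^F$. Summing over the $G^F$-classes partitioning $C^F$ yields $\sum_{x \in C^F} f_\t(x) = |C^F|$, which proves the lemma. The only mildly delicate point is the middle step, namely tracing how the $F$- and $F^2$-parametrizations of conjugacy classes interact so that a class indexed by $a$ for the $F$-twist becomes the class indexed by $a^2$ for the $F^2$-twist. This doubling is precisely what makes the hypothesis $\t^2 = 1$ (rather than just the linearity of $\t$) the natural assumption in the statement.
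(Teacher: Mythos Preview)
Your proof is correct and follows essentially the same approach as the paper: both show that an element of $C^F$ parametrized by $a$ under the $F$-twist lies in the $G^{F^2}$-class parametrized by $a^2$, via the computation $h\iv F^2(h) = \dot a F(\dot a)$, and then invoke $\t^2=1$. The paper streamlines the computation slightly by choosing $\dot a \in Z_G(x)^F$ so that $\dot a F(\dot a) = \dot a^2$ on the nose, but your more general formulation reaching the same conclusion in $A_G(x_0)$ is equally valid.
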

\begin{proof}
Take $g \in C^F$.  Then $g$ is $G^F$-conjugate to an $x_a \in C^F$
for some $a \in A_G(x)$.  Then there exists  
$h \in G$ such that $g = hxh\iv$ and that $h\iv F(h) = \da \in Z_G(x)$.
Since $F$ acts trivially on $A_G(x)$, we may choose $\da \in Z_G(x)^F$.
We have
\begin{equation*}
h\iv F^2(h) = h\iv F(h)\cdot F(h\iv F(h)) = \da F(\da) = \da^2.
\end{equation*}
It follows that any element $g \in C^F$ is 
$G^{F^2}$-conjugate to $g_{a^2}$ for some $a \in A_G(x)$. Hence we have
$f_{\t}(g) = \t(a^2) = 1$ by our assumption.
We have
\begin{equation*}
m_2(f_{\t}) = \frac{1}{|G^F|}\sum_{g \in C^F}f_{\t}(g) 
            = |C^F|/|G^F|
\end{equation*}
as asserted.  The lemma is proved.
\end{proof}
%%%
\para{5.4.}
Assume that $G$ is as in 3.1, and we use the notation there.  
Let
$A = \IC(\ol C, \CE)[\dim C]$ be the cuspidal 
character sheaf on $G$. (Here $\CE$ represents one of the simple 
local systems $\CE, \CE', \dots$ on $C$ if there exist more than one).  
Then $A$ is $F$-stable.  Let $\r \in A_G(g)\wg$ be the irreducible
character corresponding to $\CE$.
Note that $F$ acts trivially on $A_G(g)$. 
We consider the class function $f_{\r}$ on $G^{F^2}\uni$ defined as in 
(5.2.1) for $\t = \r$.  
We show that 
\begin{equation*}
\tag{5.4.1}
m_2(f_{\r}) = q^{-(\dim G - \dim C)}.
\end{equation*}
In fact thanks to Lemma 3.2, one can apply Lemma 5.3 and 
we have 
$m_2(f_{\r}) = |C^F|/|G^F|$.  Since $F$ acts trivially on 
$A_G(g)$, $C^F$ splits into 
several $G^F$-conjugacy classes, which are parametrized by 
$A_G(g)\ssim$.  Let $a_1, \dots, a_r$ be 
the representatives of the conjugacy classes in $A_G(g)$, and 
let $\ZC_i$ the $G^F$-conjugacy classes in $C^F$ 
corresponding to $a_i$.  We choose $g_i \in \ZC_i$. 
Since $|Z_G(g_i)^F| = |A_A(a_i)||Z^0_G(g_i)^F|$, where 
$A = A_G(g)$, we have
\begin{equation*}
|C^F|/|G^F| = \sum_{i=1}^r|\ZC_i|/|G^F| 
            = \sum_{i = 1}^r|Z_A(a_i)|\iv |Z^0_G(g_i)^F|\iv.
\end{equation*}
Here $Z^0_G(g_i)^F \simeq Z^0_{H_i}(u_i)^F$, where 
$g_i = s_iu_i = u_is_i$ and $H_i = Z^0_G(s_i)$.  
Note that $Z^0_{H_i}(u_i) \simeq Z_H^0(u)$, and since 
$u \in C_0$, where $(C_0, \CE_0)$ is a cuspial pair with 
unipotent support, 
it is known by [L2,I, Prop.3.12] that $Z_H^0(u)$ is a unipotent group. 
It follows that 
\begin{equation*}
|Z^0_G(g_i)^F| = |Z^0_{H_i}(u_i)^F| = q^{\dim H - \dim C_0}
               = q^{\dim G - \dim C}.
\end{equation*}
Hence we have
\begin{equation*}
|C^F|/|G^F| = q^{-(\dim G - \dim C)}\sum_{i=1}^r|Z_A(a_i)|\iv 
            = q^{-(\dim G - \dim C)}.
\end{equation*}
This proves (5.4.1).
\par
Let $\vf_0: (F^2)^*\CE \isom \CE$, $\vf : (F^2)^*A \isom A$
and $\f_A: (F^2)^*A \isom A$ be as in 3.1, but replacing $F$ by 
$F^2$. 
Then the characteristic function $\x_{\CE, \vf_0}$ on $C^{F^2}$ 
coincides with $f_{\r}|_{C^{F^2}}$.
Since $A$ is clean, the function $\x_{A, \vf}$ coincides with 
$f_{\r}|_{\ol C^{F^2}}$.  It follows that $\x_A$ coincides with 
$q^{\dim G - \dim C}f_{\r}$.
This implies that 
\begin{lemma}  %%% Lemma 5.5
Let $A$ be a cuspidal character sheaf of $G$. 
Then we have 
\begin{equation*}
m_2(\x_A) = 1.
\end{equation*}
\end{lemma}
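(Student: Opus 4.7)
The plan is to combine the computations already assembled in Section 5.4 with the cleanness of cuspidal character sheaves. All the essential ingredients are in place: I need to identify $\x_A$ with a scalar multiple of the combinatorial class function $f_\r$, compute $m_2(f_\r)$ using Lemma 5.3, and verify that the scalar factors cancel.

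First, I would use Lemma 3.2, which guarantees that $\r$ (the irreducible character of $A_G(g)$ attached to $\CE$) is a linear character with $\r^2 = 1$. Since $F$ acts trivially on $A_G(g)$, Lemma 5.3 applies and gives the identity $m_2(f_\r) = |C^F|/|G^F|$. I would then evaluate this ratio by decomposing $C^F$ into $G^F$-classes indexed by conjugacy classes $a_1, \ldots, a_r$ of $A_G(g)$, writing $|Z_G(g_i)^F| = |Z_{A_G(g)}(a_i)|\,|Z^0_G(g_i)^F|$, and invoking [L2, I, Prop.~3.12]: since $(C_0, \CE_0)$ has unipotent support on $H = Z^0_G(s)$, the group $Z^0_H(u) \simeq Z^0_G(g_i)$ is unipotent, so $|Z^0_G(g_i)^F| = q^{\dim H - \dim C_0} = q^{\dim G - \dim C}$. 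Summing $\sum_i |Z_{A_G(g)}(a_i)|^{-1} = 1$ yields $|C^F|/|G^F| = q^{-(\dim G - \dim C)}$, establishing (5.4.1).

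Next I would bridge between $f_\r$ and $\x_A$ using cleanness. By the choice of $\vf_0$ fixed in Section 3.1 (inducing the identity on the stalk $\CE_g$), the characteristic function $\x_{\CE, \vf_0}$ coincides with $f_\r$ on $C^{F^2}$; since the cuspidal local system $\CE$ on $C$ is clean, the restriction of $\x_{A,\vf}$ to $\ol C^{F^2}$ agrees with $f_\r$ (extended by zero outside $C^{F^2}$). Replacing $F$ by $F^2$ in the normalization $\f_A = q^{(\dim G - \dim C)/2}\vf$ of Section 3.1 replaces $q$ by $q^2$, so the normalization factor becomes $q^{\dim G - \dim C}$, i.e.\ $\x_A = q^{\dim G - \dim C} f_\r$.

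Combining the two identities,
\begin{equation*}
m_2(\x_A) = q^{\dim G - \dim C} \, m_2(f_\r) = q^{\dim G - \dim C} \cdot q^{-(\dim G - \dim C)} = 1,
\end{equation*}
which is the claim. The only nontrivial input beyond the preceding discussion is the cleanness of $A$, which is what permits the clean identification $\x_A = q^{\dim G - \dim C} f_\r$ with no contributions from the boundary $\ol C \setminus C$; this is the main technical point, but for the cuspidal character sheaves listed in 3.1 it is well known.
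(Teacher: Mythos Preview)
Your proposal is correct and follows essentially the same approach as the paper: the argument in Section 5.4 establishes (5.4.1) via Lemma 3.2 and Lemma 5.3 exactly as you describe, and then uses cleanness of $A$ together with the $F^2$-normalization $\f_A = q^{\dim G - \dim C}\vf$ to identify $\x_A = q^{\dim G - \dim C}f_{\r}$, from which $m_2(\x_A) = 1$ follows immediately.
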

%%%
\para{5.6.}
Let $s \in T^*$ be such that the class $\{ s\}$ is $F^2$-stable.
Then there exists $s_0 \in \{ s\}$ such that $F^2(s_0) = s_0$.
Let $H = Z_G^0(s_0)$.  Then $H$ is an $F^2$-stable reductive subgroup
of $G$.  It is known, since the center of $G$ is connected, that 
there exists a natural bijection 
$\CE(G^{F^2}, \{s\}) \lra \CE(H^{F^2}, \{1\})$, $\r \lra \r\uni$.  
Concerning the values of $m_2(\r)$ and $m_2(\r\uni)$, the following 
result is known. (For unipotent characters, we follow the notation in 
Section 4.)
%%%
\begin{theorem}[{[K], [L4]}]
Let $G$ be a connected classical group with connected center.
Then
\begin{enumerate}
\item
If there does not exist $s_0 \in \{ s\}^{F^2}$ 
such that $F(s_0) = s_0\iv$, then $m_2(\r) = 0$ for any 
$\r \in \CE(G^{F^2}, \{s\})$.  
If there exists such $s_0$, then under the notation of 
5.6, $m_2(\r) = m_2(\r\uni)$ for any $\r \in \CE(G^{F^2}, \{s\})$.
\item
Assume that $G$ is of type $B_n$ or $C_n$. Let $\CF$ be a family in 
$\F_n$ such that $|Z_1| = 2d_1+1$  (cf. 4.2).  
Then we have 
\begin{equation*}
m_2(\r_{\vL}) = \begin{cases}
             2^{d_1} &\quad\text{ if $\vL$ is special}, \\
              0      &\quad\text{ otherwise.}
                 \end{cases} 
\end{equation*}
\item
Assume that $G$ is of type $D_n$. Let $\CF$ be a non-degenerate 
family in $\F_n^+$ such that $|Z_1| = 2d_1$ (cf. 4.2).  Then we have
\begin{equation*}
m_2(\r_{\vL}) = \begin{cases}
                  2^{d_1-1}  &\quad\text{ if $\vL$ is special}, \\
                  0          &\quad\text{ otherwise}.
                \end{cases}
\end{equation*}
If $\CF = \{ \vL, \vL'\}$ is a degenerate family, then we have
\begin{equation*}
m_2(\r_{\vL}) = m_2(\r_{\vL'}) = \begin{cases}
                              1 &\quad\text{ if $F$ is of split type},  \\
                              0 &\quad\text{ otherwise.}
                                 \end{cases}
\end{equation*}
\end{enumerate}
\end{theorem}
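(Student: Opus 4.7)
The plan is to establish the theorem in two stages: first, a Jordan decomposition reduction in part (i), and then a combinatorial computation for the unipotent characters in parts (ii) and (iii). For part (i), I would compute the permutation character $\pi = \Ind_{G^F}^{G^{F^2}} 1$ using Deligne-Lusztig theory. The pairing $\lp \pi, R^G_T(\th)\rp_{G^{F^2}}$ can be evaluated by a double-coset manipulation, and its non-vanishing forces a Frobenius compatibility between $T$ and its dual, which translates via $f : T^* \simeq \CS(T)$ into the existence of $s_0 \in \{s\}$ with $F(s_0) = s_0\iv$. Under that condition the centralizer $H = Z_G^0(s_0)$ is $F$-stable (since $F$ sends $s_0$ to $s_0\iv$ and $Z_G(s_0) = Z_G(s_0\iv)$), and the equality $m_2(\r) = m_2(\r\uni)$ follows from the compatibility of the Jordan decomposition bijection $\CE(G^{F^2}, \{s\}) \lra \CE(H^{F^2}, \{1\})$ with the uniform function $\pi$. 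Connectedness of $Z(G)$ is essential here for a well-defined Jordan decomposition of characters on both sides.

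\medskip
For parts (ii) and (iii), having reduced to unipotent characters of a split-type classical group, I would invert the Fourier relation (4.2.2) via (4.2.3) to obtain
\begin{equation*}
m_2(\r_\vL) = \sum_{\vL' \in X_n} \{\vL, \vL'\}\, m_2(R_{\vL'}).
\end{equation*}
The key intermediate claim is that $m_2(R_{\vL'}) = 1$ for every $\vL' \in \CF$ and vanishes for $\vL' \notin \CF$. For the cuspidal almost character $R_0 = R_{\vL_c}$ this follows from Lemma 5.5 together with Theorem 1.5, which gives $R_0 = \z_0 \x_{A_0}$; under the normalization of 3.1 one checks $\z_0 = 1$. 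For the remaining $\vL' \in \CF$, an inductive argument based on Lemma 1.8, which expresses $R_{x_E} = (-1)^{\dim \vS}\z_0\x_{A_E}$ as arising by Harish-Chandra induction from the Levi $L$ carrying the cuspidal pair, combined with the compatibility of Harish-Chandra induction with $m_2$ through the embedding of symmetric spaces $L^{F^2}/L^F \hra G^{F^2}/G^F$, propagates $m_2(R_{\vL'}) = 1$ throughout $\CF$. Vanishing outside $\CF$ reflects the block decomposition of $\pi$ according to rational series and the family structure.

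\medskip
The problem is then reduced to the combinatorial sum
\begin{equation*}
\sum_{M'} \{\vL_M, \vL_{M'}\} = \frac{1}{2^f}\sum_{M'}(-1)^{|M^\sharp \cap {M'}^\sharp|},
\end{equation*}
where $M'$ ranges over subsets of $Z_1$ satisfying the parity constraint (and, in type $D_n$, the further ordering constraint of 4.2). Viewing $M' \mapsto (-1)^{|M^\sharp \cap {M'}^\sharp|}$ as a character of the elementary abelian $2$-group of subsets of $Z_1$, Fourier orthogonality forces the sum to vanish unless $M^\sharp = \emptyset$, i.e.\ unless $\vL_M$ is the special symbol $\vL_{M_0}$; in that case the surviving count is $2^{|Z_1|-1}/2^f$, evaluating to $2^{d_1}$ in type $B_n/C_n$ (where $|Z_1| = 2d_1+1$, $f = d_1$) and to $2^{d_1-1}$ in type $D_n$ (where $|Z_1| = 2d_1$, $f = d_1-1$, with the extra factor of $2$ absorbed by the ordering convention on $M$). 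The degenerate case in $D_n$ is handled separately: the family $\{\vL, \vL'\}$ contains only two symbols with trivial pairing, and $m_2(\r_\vL)$ is read off directly from the Deligne-Lusztig expression for $\r_\vL$, giving $1$ in the split case and $0$ otherwise.

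\medskip
The main obstacle I expect is part (i): proving the Jordan decomposition compatibility $m_2(\r) = m_2(\r\uni)$ demands a genuinely deep geometric analysis of the symmetric variety $G^{F^2}/G^F$, or equivalently a careful study of Deligne-Lusztig characters under the involution $s \mapsto F(s\iv)$ on semisimple classes in $G^*$. Once this is secured and $m_2(R_\vL) = 1$ is established on almost characters via Lemma 5.5 and parabolic induction, the identities in (ii) and (iii) reduce to the routine Fourier calculation on elementary abelian $2$-groups sketched above.
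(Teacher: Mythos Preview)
The paper does not give its own proof of this theorem: it is quoted as a result of Kawanaka [K] and Lusztig [L4], and is used as input for the rest of Section~5. So there is no ``paper's proof'' to compare against. I will therefore assess your argument on its own terms and against the logical structure of the surrounding paper.

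Your proposed argument for parts (ii) and (iii) is circular. You want to deduce $m_2(\r_{\vL})$ from $m_2(R_{\vL'}) = 1$, and for this you need $\z_0 = 1$ so that $R_0 = \x_{A_0}$ and Lemma~5.5 applies. But in this paper the equality $\z_0 = 1$ is \emph{Theorem~5.10}, and its proof (see 5.9) explicitly invokes Theorem~5.7 to obtain $m_2(R_0) = m_2(R_0^H) = 1$. In other words, the implication runs
\[
\text{Theorem 5.7} \ \Longrightarrow\ m_2(R_{\vL}) = 1\ (\text{Cor.~5.8})
\ \Longrightarrow\ \z_0 = 1\ (\text{Thm.~5.10}),
\]
and you are trying to run it backwards. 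The sentence ``under the normalization of 3.1 one checks $\z_0 = 1$'' hides exactly the content you are supposed to be proving; the normalization in 3.1 fixes $\f_A$, not the scalar relating $\x_A$ to $R_0$.

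Your propagation step via Lemma~1.8 and ``compatibility of Harish--Chandra induction with $m_2$'' is also not a proof as stated. Lemma~1.8 relates $R_{x_E}$ to $\x_{A_E}$ through the same unknown scalar $\z_0$, so it does not help until $\z_0$ is known; and the asserted compatibility of $m_2$ with parabolic induction is a nontrivial statement about how the symmetric space $G^{F^2}/G^F$ interacts with Levi subgroups, which you have not established. By contrast, the actual proofs in [K] and [L4] compute $m_2(\r_{\vL})$ directly from the structure of the symmetric space (Kawanaka via twisted Gelfand--Graev characters, Lusztig via an explicit model for $\Ind_{G^F}^{G^{F^2}}1$ in terms of the parametrization of unipotent characters), without ever assuming the scalars in Lusztig's conjecture are known.

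Your combinatorial endgame --- the Fourier sum over $M' \subset Z_1$ showing that only the special $\vL$ survives with value $2^{d_1}$ or $2^{d_1-1}$ --- is correct and is essentially how one passes between Theorem~5.7 and Corollary~5.8. But that step is the easy direction; the substance of Theorem~5.7 lies elsewhere, and your route to it through $\z_0 = 1$ is not available.
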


\par
In view of (4.2.2), we have the following  corollary.
%%%
\begin{cor}[{[L4]}] %%% Cor.5.8
Let $G$ be a classical group of split type.
Assume that $\CF$ is a non-degenerate family.  
Then for any $\vL \in \CF$, we have $m_2(R_{\vL}) = 1$.
\end{cor}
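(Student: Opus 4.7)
The plan is to expand $R_{\vL}$ via the formula (4.2.2) and then apply Theorem 5.7 to kill most terms, leaving a single contribution that must turn out to equal $1$. Since $m_2$ is linear on class functions, we have
\begin{equation*}
m_2(R_{\vL}) = \sum_{\vL' \in X_n} \{\vL, \vL'\}\, m_2(\r_{\vL'}).
\end{equation*}
Because the pairing $\{\ ,\ \}$ is declared to be zero across distinct families, only symbols $\vL'$ in the same family $\CF$ as $\vL$ contribute. By parts (ii) and (iii) of Theorem 5.7, among these only the special symbol $\vL_{M_0}$ has nonzero $m_2$ (non-degeneracy of $\CF$ rules out the degenerate case of (iii)).

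So the computation reduces to evaluating a single product $\{\vL, \vL_{M_0}\}\, m_2(\r_{\vL_{M_0}})$. First I would unwind the pairing using (4.2.1): writing $\vL = \vL_M$, one has $M_0^{\sharp} = M_0 \bigtriangleup M_0 = \emptyset$, so $|M^{\sharp}\cap M_0^{\sharp}| = 0$ and therefore $\{\vL_M, \vL_{M_0}\} = 2^{-f}$, with $f = d_1$ in type $B_n/C_n$ and $f = d_1 - 1$ in type $D_n$. Then I would plug in the value of $m_2(\r_{\vL_{M_0}})$ from Theorem 5.7: it is $2^{d_1}$ in type $B_n/C_n$ and $2^{d_1-1}$ in type $D_n$. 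In each case the two powers of $2$ cancel exactly, yielding $m_2(R_{\vL}) = 1$.

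There is essentially no obstacle here; the corollary is a bookkeeping consequence of the orthogonality built into the pairing $\{\ ,\ \}$ together with the numerical values provided by Kawanaka and Lusztig. The only point requiring a line of care is checking that the exponent $f$ in (4.2.1) is matched to the multiplicity $2^{d_1}$ or $2^{d_1-1}$ in a consistent way between the two types $B_n/C_n$ and $D_n$; once this matching is verified, the same one-line cancellation works uniformly. The non-degeneracy assumption is used precisely to exclude the degenerate family case of Theorem 5.7(iii), where $m_2$ takes the value $1$ rather than a power $2^{d_1-1}$, and where the pairing formula (4.2.1) also does not apply in the form stated.
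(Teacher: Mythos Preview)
Your proof is correct and follows exactly the approach the paper indicates: the paper simply says ``In view of (4.2.2), we have the following corollary,'' and you have written out precisely the computation this entails, namely expanding $R_{\vL}$ via (4.2.2), applying Theorem~5.7 to isolate the special symbol, and checking that $\{\vL_M,\vL_{M_0}\} = 2^{-f}$ cancels against $m_2(\r_{\vL_{M_0}}) = 2^{f}$.
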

%%%   
\para{5.9.}
Let $A$ be as in 5.4 and assume that $A \in \wh G_{\CL}$.
Let $s \in T^*$ be such that the class $\{ s\}$ corresponds 
to $\CL$ via $f$ in 1.2. 
Then $s^2 = 1$.
Let $R_0$ be the almost character of $G^F$ corresponding to 
$A$ as given in Theorem 1.5. 
Then by Lemma 5.5, we have 
$m_2(R_0) \ne 0$.  
Note that $R_0$ is a linear combination of 
irreducible characters contained in $\CE(G^{F^2}, \{s\})$.  
  Thus by Theorem 5.7, (i), there exists
$s_0 \in \{ s\}$ such that $s_0 \in G^{F^2}$ and that 
$F(s_0) = s_0\iv = s_0$.  
Then $H = Z^0_G(s_0)$ is an $F$-stable reductive subgroup of $G$, 
and so $H$ is split over $\BF_{q^2}$. 
Recall that the almost character $R_0$ is given as in (1.3.1).
Then it is known that $(-1)^{l(z_1)} = \s(H)\s(G)$, where $\s(G)$
is a split rank of $G$ with respect to $\BF_{q^2}$, 
and vice versa for $H$. Since $G, H$ are 
split, we have $l(z_1) = \s(H)\s(G) = 1$.
Let $R_0^H$ be the almost character of $H^{F^2}$ obtained from 
$R_0$ under the correspondence $\r \lra \r\uni$.
Then we see that
\begin{equation*}
\tag{5.9.1}
m_2(R_0) = m_2(R_0^H) = 1.
\end{equation*}
In fact, the first equality follows from Theorem 5.7, (i), 
together with the fact that $l(z_1) = 1$.  The second equality follows from 
Theorem 5.7, (ii), (iii).
\par
Combining Lemma 5.5 with (5.9.1), we have the following theorem.
%%%
\begin{theorem}  %Theorem 5.10
Let $G$ be an adjoint simple group of classical type.
Let $A$ be a cuspidal character sheaf, and $\x_A = \x_{A, \f_A}$ 
be the
characteristic function of $A$ on $G^{F^2}$ (defined as in 3.1).  
Let $R_0$ be the almost character of $G^{F^2}$ corresponding to $A$.
Then we have
\begin{equation*}
R_0 = \x_A.
\end{equation*}
\end{theorem}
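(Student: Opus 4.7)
The plan is to prove Theorem 5.10 by combining the two multiplicity computations that have been set up in Section 5: the identity $m_2(\x_A) = 1$ from Lemma 5.5, and the identity $m_2(R_0) = 1$ from (5.9.1). Since Theorem 1.5, applied to $G^{F^2}$, already guarantees that $R_0$ and $\x_A$ differ only by a scalar of absolute value $1$, matching their $m_2$-values will immediately pin that scalar down to $1$.

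More precisely, first I would invoke Theorem 1.5 for the group $G$ with Frobenius $F^2$: since $G$ is adjoint of classical type, its center is trivial (hence connected), and the hypotheses apply, so there exists $\z_0 \in \Ql^*$ with $|\z_0| = 1$ and
\begin{equation*}
R_0 = \z_0\, \x_A.
\end{equation*}
Here $\x_A = \x_{A,\f_A}$ uses the specific isomorphism $\f_A = q^{(\dim G - \dim C)/2}\vf$ fixed in 3.1, and $R_0$ is the almost character of $G^{F^2}$ determined by the cuspidal symbol $\vL_c$.

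Next I would apply $m_2$ to both sides. On the left, (5.9.1) gives $m_2(R_0) = 1$: this uses Theorem 5.7(i) (noting that the semisimple parameter $s$ attached to $\CL$ satisfies $s^2 = 1$, so one can take $s_0 \in \{s\}^{F^2}$ with $F(s_0) = s_0\iv = s_0$) together with the computation that $l(z_1) = 1$ because $G$ and $H = Z_G^0(s_0)$ are both split over $\BF_{q^2}$. The reduction $m_2(R_0) = m_2(R_0^H)$ then lands in the unipotent case, where Theorem 5.7(ii),(iii) applied to the cuspidal symbol $\vL_c$ (which, being the unique symbol in its family of minimal $M = \emptyset$, is special in the sense of 4.3) gives $m_2(R_0^H) = 1$. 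On the right, Lemma 5.5 gives $m_2(\x_A) = 1$; the key input here is Lemma 3.2, which ensures $\r^2 = 1$ for the character $\r$ of $A_G(g)$ corresponding to $\CE$, so that Lemma 5.3 applies to $f_\r$, and $\x_A = q^{\dim G - \dim C} f_\r$ gives exactly $m_2(\x_A) = 1$.

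Combining these, $1 = m_2(R_0) = \z_0\, m_2(\x_A) = \z_0$, so $\z_0 = 1$ and $R_0 = \x_A$ as required. The only non-routine aspect is the bookkeeping in Step 2 to identify $R_0^H$ correctly under the Jordan-type correspondence $\r \leftrightarrow \r\uni$ and to verify that $\vL_c$ transports to the special symbol governing the cuspidal almost character on $H^{F^2}$; once this is done, the rest is a one-line computation. In particular, the argument works uniformly in all characteristics (including $p = 2$), because Lemma 3.2 and Lemma 5.5 have been established in that generality, which is exactly what is needed for the main results of the paper on $Sp_{2n}$ and $SO_{2n}$ with $p = 2$.
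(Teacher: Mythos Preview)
Your proposal is correct and follows exactly the paper's approach: Theorem~1.5 gives $R_0 = \z_0\,\x_A$ with $|\z_0| = 1$, and then $m_2(R_0) = 1$ from (5.9.1) together with $m_2(\x_A) = 1$ from Lemma~5.5 force $\z_0 = 1$.

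One small correction in your write-up: the cuspidal symbol $\vL_c$ is \emph{not} the special symbol of the family $\CF_c$ --- the special one is $\vL_0 = \vL_{M_0}$ (see 4.3), whereas $\vL_c = \vL_M$ with $M = \emptyset$. Theorem~5.7(ii),(iii) computes $m_2(\r_{\vL})$, not $m_2(R_{\vL})$, so you cannot apply it directly to $\vL_c$. The identity $m_2(R_0^H) = 1$ is rather Corollary~5.8: in the expansion $R_{\vL_c} = \sum_{\vL'} \{\vL_c,\vL'\}\,\r_{\vL'}$ only the special $\r_{\vL_0}$ contributes a nonzero $m_2$, namely $2^{f}$, and its coefficient is $\{\vL_c,\vL_0\} = 2^{-f}$, giving the product $1$. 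This does not affect the validity of your argument, since you correctly cite (5.9.1) as the source of $m_2(R_0) = 1$.
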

\par
As a corollary we have the following result, which holds without
any restriction on $p$ nor $q$.
%%%
\begin{cor}  %%% Cor. 5.11.
Let $G$ be a connected classical group with connected center.
Then the constants $\z_x$ appearing in Lusztig's, conjecture
(Theorem 1.5) can be determined for $G^{F^2}$ in the following cases;
under the notation of 1.6, assume that $W_{\d} = Z_{\d}$.  Then we have 
\begin{equation*}
R_{x_E} = (-1)^{\dim \vS}\x_{A_E}
\end{equation*}
for any $E \in W_{\d}\wg$.
In other words, we have $\z_{x_E} = (-1)^{\dim \vS}$.
\end{cor}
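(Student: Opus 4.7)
The plan is to reduce the corollary to Theorem 5.10 via Lemma 1.8 and the adjoint reduction of 1.9. First, apply Lemma 1.8 in the setting obtained by replacing $F$ with $F^2$. The hypothesis $W_\d = Z_\d$ allows the choice $w_1 = 1$, so $L_{w_1} = L$, the twisted cuspidal sheaf $A_0'$ becomes $A_0$, the automorphism $\g_1$ on $W_\d$ is the identity, and $(W_\d)\wg\ex = (W_\d)\wg$. Lemma 1.8 then gives
\begin{equation*}
R_{x_E} = (-1)^{\dim \vS}\z_0\,\x_{A_E}, \qquad E \in (W_\d)\wg,
\end{equation*}
where $\z_0 \in \Ql^*$ is characterized by (1.7.1), namely $R_0^L = \z_0\,\x_{A_0}$ on $L^{F^2}$. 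The problem therefore reduces to showing $\z_0 = 1$.

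Next, apply the reduction described in 1.9 to $L$ in place of $G$ (and $F^2$ in place of $F$): the scalar $\z_0$ attached to $A_0$ on $L$ agrees with the scalar for the corresponding cuspidal sheaf $\bar A_0$ on the adjoint quotient $\bar L = L/Z^0(L)$. Because $A_0$ is cuspidal, the Levi $L$ is either the maximal torus (in which case $\z_0 = 1$ is immediate) or has the same type as $G$; in the latter case $\bar L$ is an adjoint simple group of classical type. Since any classical group becomes of split type after base change to $\BF_{q^2}$, $\bar L$ then falls under the hypotheses of Theorem 5.10, which gives $R_0^{\bar L} = \x_{\bar A_0}$ on $\bar L^{F^2}$. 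Hence $\z_0 = 1$, and combining with the first paragraph yields $\z_{x_E} = (-1)^{\dim \vS}$, as required.

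The main delicate point is matching the normalization used in (1.7.1) with the one fixed in Section 3.1 and used in the statement of Theorem 5.10: Lemma 1.8 uses $\f_{A_0'} = \vf_0^{w_1}$, which equals $\vf_0$ when $w_1 = 1$, while Theorem 5.10 is phrased via the shifted isomorphism $\f_A = q^{(\dim L - \dim C)/2}\vf$. These differ only by the weight factor built into the IC normalization, but one must check that this factor is absorbed consistently in the definition of $\z_0$ from (1.7.1) and in the statement of Theorem 5.10, so that the equality $\z_0 = 1$ in one reading genuinely transports to the other. Once that bookkeeping is verified, everything else follows by direct citation of the previously established results.
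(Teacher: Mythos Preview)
Your argument is correct and follows the same route as the paper: invoke Lemma~1.8 (over $F^2$), use $W_\d=Z_\d$ to take $w_1=1$ so that $L_{w_1}=L$ and $A_0'=A_0$, reduce $\z_0$ to the adjoint quotient via~1.9, and conclude $\z_0=1$ from Theorem~5.10. The paper's proof is terser but records exactly these steps, ending with the observation $L_{w_1}^{F^2}=L^{F^2}$; your added remarks on the torus case and on splitness over $\BF_{q^2}$ are reasonable elaborations, and the normalization caveat you flag is a genuine bookkeeping point that the paper likewise leaves implicit.
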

\begin{proof}
Lemma 1.8 together with the argument in 1.9 shows that 
the determination of $\z_x$ is reduced to the case of 
$\z_0$ (the one corresponding to the cuspidal character sheaf)
in the case of adjoint simple groups.  We know that $\z_0 = 1$ by
Theorem 5.10.  Then the corollary follows from Lemma 1.8, together 
with 1.9 since $L^{F^2}_{w_1} = L^{F^2}$.
\end{proof}
\begin{remark} %%% remark 5.12
In the case where $\ol G = PSp_{2n}$ or $SO_{2n+1}$, 
Corollary 5.11 gives a complete answer for the determination 
of constants $\z_x$ for $G^{F^2}$ since we have alwyas 
$W_{\d} = Z_{\d}$ in that case.  In the case where 
$\ol G = PSO_{2n}$, the corollary holds if $R_{x_E}$ is a linear 
combination of unipotent characters, i.e., if $A_{x_E} \in \wh G_{\Ql}$. 
But it happens that $W_{\d} \ne Z_{\d}$ for some $\d$. 
\end{remark}

\par\medskip
%%%%
%%%%
\section{From $G^{F^2}$ to $G^F$} 
%%%
\para{6.1.}
The results Theorem 5.10 and Corollary 5.11 in the previous 
section are only valid for the group $G^{F^2}$.  In this section, 
by using a certain specialization argument, we extend those results
to the group $G^F$ as far as $\x_A$ are concerned with 
unipotent characters.  In the case of $p = 2$, this implies the extension 
of Theorem 5.10 and Corollary 5.11 for the group $G^F$ of split type.
We have the following theorem.  
%%%
\begin{theorem}  %%% Th. 6.2
Let $G$ be a classical group of split type over $\Fq$.  
Let $A$ be a cuspidal character sheaf contained in $\wh G_{\Ql}$, 
and $\x_A = \x_{A,\f_A}$ be the characteristic function of $A$ over
$G^F$.  Let $R_0$ be the almost character of $G^F$ corresponding to
$A$.  Then we have
\begin{equation*}
R_0 = \x_A.
\end{equation*}
\end{theorem}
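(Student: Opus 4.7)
The plan is to transfer Theorem 5.10 from the double covers $G^{F^{2m}}$ down to the base group $G^F$ by a specialization argument exploiting the rationality in $q$ of both sides of the putative identity. First, I observe that the cuspidal character sheaf $A$, the mixed structure $\f_A$ fixed in 3.1, and the cuspidal symbol $\vL_c$ defining $R_0 = R_{\vL_c}$ in 4.3 are all geometric/combinatorial data attached to $G$ as an algebraic group, hence make sense after replacing $F$ by any power $F^m$. Applying Theorem 5.10 with $F$ replaced by $F^m$ therefore gives
\[
R_0^{(2m)} = \x_A^{(2m)} \qquad \text{on } G^{F^{2m}}
\]
for every $m \ge 1$, where the superscript $(m)$ denotes the analogous object over $\BF_{q^m}$.

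Next I would fix canonical representatives of conjugacy classes that are compatible across all $F^m$. For each unipotent class $C_1$, Theorem 2.2 supplies a split element $u \in C_1^F$ which remains split in $G^{F^m}$ for every $m$; this yields representatives $u_a^{(m)} \in C_1^{F^m}$ indexed by $a \in A_G(u)$ as in 2.3. For mixed (Jordan) representatives $g = su$ I pick $s$ in the $F$-fixed points of a split $F$-stable maximal torus and $u$ a split element in $Z^0_G(s)$. Since $A \in \wh G_{\Ql}$ forces the semisimple parameter attached to $A$ to be trivial, the underlying class $C$ in 3.1 is unipotent, so $\x_A^{(m)}$ vanishes off $G\uni$; on $G\uni^{F^m}$ the degenerate case $L = G$ shows that $\x_A^{(m)}$ is the generalized Green function attached to $(C, \CE)$, which by Corollary 2.4 evaluates at $u_a^{(m)}$ as a rational function $H_{A,C_1,a}(q^m)$ of $q^m$.

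The analogous rationality for $R_0^{(m)} = \sum_{\vL' \in X_n} \{\vL_c, \vL'\}\, \r_{\vL'}^{(m)}$ follows because the coefficients from (4.2.1) are $q$-independent, while each unipotent character value $\r_{\vL'}^{(m)}(g)$ on a canonical representative can be expressed as a rational function of $q^m$ whose coefficients do not depend on $m$: on the unipotent locus via Lusztig's algorithm together with Corollary 2.4, and on mixed elements by combining Lusztig's Jordan decomposition of characters with the character formula for Deligne--Lusztig induction from an $F$-stable Levi subgroup. Consequently, for each fixed class representative $g$ there are rational functions $H_{R,g}, H_{A,g} \in \Ql(x)$ such that $R_0^{(m)}(g) = H_{R,g}(q^m)$ and $\x_A^{(m)}(g) = H_{A,g}(q^m)$ for all $m \ge 1$.

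The closing step is a formal specialization: by the first paragraph, the rational function $H_{R,g} - H_{A,g}$ vanishes at $x = q^{2m}$ for every $m \ge 1$, hence vanishes identically; evaluating at $m = 1$ gives $R_0(g) = \x_A(g)$ for every class representative $g$, so $R_0 = \x_A$ on $G^F$. The main obstacle is the uniform rationality claim: one must arrange that Lusztig's algorithm on the unipotent locus and the Deligne--Lusztig formula on mixed elements really do produce, at each chosen representative, a single rational function in $q$ with $m$-independent coefficients. This is precisely where the split representatives of Theorem 2.2 and the geometric origin of the character-sheaf side are essential; once this uniform rationality is in place, the passage from even exponents to $F$ itself is automatic.
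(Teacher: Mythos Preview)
Your overall strategy---fix rational-in-$q$ expressions for both $R_0^{(m)}(g)$ and $\x_A^{(m)}(g)$, use Theorem 5.10 to match them at all even $m$, then specialize to $m=1$---is exactly the shape of the paper's argument. The problem is the step you flag as ``the main obstacle'' and then wave through: the uniform rationality of $R_0^{(m)}(g)$ (equivalently, of the unipotent character values $\r_{\vL'}^{(m)}(g)$). This is \emph{not} available from Lusztig's algorithm together with Corollary 2.4. Lusztig's algorithm computes the characteristic functions $\x_{A_E}^{(m)}$, not the irreducible characters $\r_{\vL'}^{(m)}$; passing from one to the other requires precisely the scalars $\z_x$ of Theorem 1.5, and for $A_E$ equal to the cuspidal character sheaf the scalar $\z_0$ is exactly what you are trying to determine. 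Likewise, the Deligne--Lusztig/Jordan-decomposition route only controls the \emph{uniform} part of $\r_{\vL'}$; the non-uniform part (which is where the cuspidal almost character lives) is invisible to that formalism. In short, the rationality of $R_{\vL_c}^{(m)}(g)$ is essentially equivalent to what you want to prove, so asserting it begs the question.

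The paper breaks this circularity by an indirect route. One works by induction on the semisimple rank, so that Theorem 6.2 (hence $\z_0 = 1$) is already known for every proper Levi, and therefore $R_{\vL}^{(m)}(g)$ is rational in $q$ for every \emph{non-cuspidal} $\vL$ (Lemma 6.8). One then uses the bitrace identity
\[
\Tr((g,T_w), I_{q^m}) \;=\; \sum_{\vL \ne \vL_c} f_{\vL}(w)\,R_{\vL}^{(m)}(g) \;+\; f_{\vL_c}(w)\,R_{\vL_c}^{(m)}(g)
\]
at a carefully chosen $w \in W$. The left-hand side is computed on the flag variety and, for $w$ of a suitable cycle type, factors through smaller-rank groups where induction applies (Lemma 6.10). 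The nontrivial combinatorial input is Proposition 6.6: there exists such a $w$ with $f_{\vL_c}(w) \ne 0$, established via a Murnaghan--Nakayama computation in the Hecke algebra. Only after this does one conclude that $R_{\vL_c}^{(m)}(g)$ is rational in $q$, and then your specialization argument finishes the proof. So the missing idea in your proposal is exactly this bitrace/induction mechanism that isolates the cuspidal contribution without presupposing its rationality.
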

As a corollary, we have
%%%
\begin{cor}  %%% Cor. 6.3
Let $G = Sp_{2n}$ or $SO_{2n}$ with $p = 2$.
Assume that $G^F$ is of split type.  Then the constants 
$\z_x$ appearing in Lusztig's conjecture (Theorem 1.5) can 
be determined completely for $Sp_{2n}$, and partly  
for $SO_{2n}$.  More precisely, under the notation of
1.6, assume that $W_{\d} = Z_{\d}$.  Then we have
\begin{equation*}
\tag{6.3.1}
R_{x_E} = (-1)^{\dim \vS}\x_{A_E}
\end{equation*}
for any $E \in W_{\d}\wg$.
In particular, (6.3.1) holds if $A_E \in \wh G_{\Ql}$ 
in the case where $G = SO_{2n}$. 
\end{cor}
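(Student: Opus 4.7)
The plan is to follow the template of Corollary~5.11, with Theorem~6.2 replacing Theorem~5.10. First I would apply Lemma~1.8 together with the reduction to adjoint simple groups outlined in~1.9. These identify the determination of $\z_{x_E}$ with that of the scalar $\z_0$ attached by (1.7.1) to the cuspidal character sheaf $A_0'$ on the twisted Levi $L_{w_1}$, after reducing to the case of $G$ adjoint simple. Under the standing hypothesis $W_{\d}=Z_{\d}$ one may take $w_1=1$, so $L_{w_1}=L$ and $A_0'=A_0$, and the problem becomes that of showing $\z_0=1$ on $L$ itself.

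Next I would exploit the decomposition of~$L$ recalled in~1.6: $L$ is either a maximal torus, in which case $\z_0=1$ by the elementary Deligne--Lusztig comparison of $R_0^L$ with $\x_{A_0}$, or else $L$ is a product of a central torus $T_0\simeq(\GG_m)^r$ with a classical factor $L'$ of the same Dynkin type as the derived subgroup of~$G$, namely $Sp_{2m}$ with $m=d^2+d$ or $SO_{2m}$ with $m=4d^2$. In the latter case, since the cuspidal pair on $L'$ recorded in 3.1(c),(d) is the unique unipotent one, $A_0$ decomposes as an external tensor product $\CL_1\boxtimes\cdots\boxtimes\CL_r\boxtimes A_0''$, with each $\CL_i$ a tame local system on $\GG_m$ and $A_0''\in\wh{L'}_{\Ql}$. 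The functions $R_0^L$ and $\x_{A_0}$ factor accordingly; each $\GG_m$-factor contributes the scalar~$1$, and the $L'$-factor contributes~$1$ by Theorem~6.2. Multiplicativity then gives $\z_0=1$, whence (6.3.1).

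The ``completely for $Sp_{2n}$ vs.\ partly for $SO_{2n}$'' dichotomy is resolved as in the Remark following Corollary~5.11. For $G=Sp_{2n}$ with $p=2$ the center is trivial, so $G$ is its own adjoint quotient, and the same argument that gives $W_{\d}=Z_{\d}$ for $\ol G=PSp_{2n}$ shows that $W_{\d}=Z_{\d}$ for every cuspidal datum~$\d$, yielding the complete determination of the scalars. For $G=SO_{2n}$ the equality can genuinely fail; however, if $A_E\in\wh G_{\Ql}$, then $\d$ is an $F$-stable unipotent cuspidal of $L^{F^r}$ and one verifies directly that $W_{\d}=Z_{\d}$, justifying the ``in particular'' clause.

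I expect the main obstacle to lie not at the level of this corollary but inside Theorem~6.2 itself. What needs checking here is essentially bookkeeping: the multiplicativity of $\z_0$ across the factorisation $L=T_0\times L'$, together with the observation that the $L'$-component of any cuspidal $A_0$ arising on $L$ always lies in $\wh{L'}_{\Ql}$. Both are guaranteed by the very restrictive classification of cuspidal character sheaves on $Sp_{2m}$ and $SO_{2m}$ in characteristic~$2$ recorded in 3.1(c),(d).
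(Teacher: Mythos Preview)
Your proposal is correct and follows essentially the same approach as the paper's proof. The paper reduces to $\z_0$ via Lemma~1.8 and the passage to adjoint simple groups in~1.9, then invokes 3.1(c),(d) to see that the relevant cuspidal character sheaf lies in $\wh L_{\Ql}$ so that Theorem~6.2 applies; your more explicit factorisation $L = T_0 \times L'$ and multiplicativity argument is precisely what the reduction in~1.9 encodes, and your handling of the $Sp_{2n}$ versus $SO_{2n}$ dichotomy matches Remark~5.12.
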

\begin{proof}
As in the proof of Corollary 5.11, the determination of $\z_x$
is reduced to that of $\z_0$. 
Assume that $G = Sp_{2n}$ or $SO_{2n}$ with $p = 2$.
In this case, it is known by 3.1, (c), (d), that a unique cuspidal character
sheaf (if it exists) is always contained in $\wh G_{\Ql}$.
Hence Theorem 6.2 can be applied, and the corollary follows from 
Lemma 1.8 (see also Remark 5.12).
\end{proof}
\begin{remark}  %% remark 6.4
 A similar argument as in Corollary 6.3 works also for the case where
$p \ne 2$.  In particular the formula (6.3.1) holds for the case where 
$A_{E} \in \wh G_{\Ql}$. Thus we 
rediscover the results in Theorem 6.2 in [S2].  Although the argument 
given in [S2] can not be applied to the case where $p = 2$, 
the proof here works simultaneoulsy for arbitrary $p$.
\end{remark}
%%%
\para{6.5.}
The remainder of this paper is devoted to the proof of Theorem 6.2.
In what follows, we assume that $G$ is a classical group 
containing a cuspidal unipotent
character.  Hence $G$ is of type $B_n$ or $C_n$ with 
$n = d^2 +d$, or of type $D_n$ with $n = 4d^2$.
We assume further that $G^F$ is of split type. 
We follow the notation in Section 4.
\par
Let $I_q$ be the $G^F$-module $\Ind_{B^F}^{G^F}1$ induced from 
$B^F$ to $G^F$.  Then the irreducible component
of $I_q$ is in bijective correspondence with $W\wg$.  We denote by 
$\r_E$ the irreducible $G^F$-module occurring in $V_q$ corresponding to
$E \in W\wg$.  $\r_E$ gives a unipotent character, which we denote 
by $\r_{\vL_E}$ with $\vL_E \in X_n$. 
Let $\CH$ be the Iwahori-Hecke algebra over $\BQ[u^{1/2}, u^{-1/2}]$
associated to the 
Coxeter system $(W, S)$ with generators $\{ T_s \mid s \in S\}$. 
$\CH$ has a basis $\{ T_w \mid w \in W\}$, where $T_w$ is defined 
as $T_w = T_{s_{i_1}}\dots T_{s_{i_k}}$ for a reduced expression 
$w = s_{i_1}\cdots s_{i_k}$. 
$\CH$ is characterized by the following properties;
\begin{equation*}
\begin{cases}
(T_s -u)(T_s +1) = 0, \\
T_sT_w = T_{sw}  \quad\text{ if } l(sw) = l(w) +1,
\end{cases}
\end{equation*}
where $l: W \to \ZZ_{>0}$ is the length function of $W$.
\par
In the case of type $B_n$ or $D_n$, the generator set $S$ of $W$
is described as follows.  Assume that $W$ is the Weyl group of type
$B_n$.  Then $W$ is realized as a group of signed permutations 
of $I = \{ 1,\bar 1, 2, \bar 2, \dots, n, \bar n\}$.  
The set 
$S$ of generators is given as $S = \{ s_0, s_1, \dots, s_{n-1}\}$
with $s_0 = (1, \bar 1), s_1 = (1,2), \dots, s_{n-1} = (n-1,n)$, 
and we denote by $T_i$ the generator of $\CH$ corresponding to $s_i$.
Note that the subalgebra of $\CH$ generated by $T_1, \dots, T_{n-1}$
is isomorphic to the Iwahori-Hecke algebra of type $A_{n-1}$.
Next assume that $W$ is of tyep $D_n$.  Then $W$ is a subgroup of 
the Weyl group of type $B_n$, generated by 
$s'_0 = (1,\ol 1)(2,\ol 2), s_1, \dots, s_{n-1}$.  We denote by 
$T_0', T_1, \dots, T_{n-1}$ the corresponding generators of $\CH$.
\par
The endomorphism algebra $\End_{G^F}I_q$ is isomorphic to 
the specialized algebra $\Ql\otimes \CH$ via the algebra 
homomorphism $\BQ[u^{1/2},u^{-1/2}] \to \Ql$ by $u \mapsto q$, 
which we denote by $\CH_q$.
We denote by $E_q$ the irreducible representation  of $\CH_q$ 
corresponding to $E \in W\wg$.
Now $I_q$ has a structure of $G^F\times \CH_q$-module, and 
the trace for $g \in G^F, T_w \in \CH_q$ is written as 
\begin{equation*}
\tag{6.5.1}
\Tr((g, T_w), I_q) = \sum_{E \in W\wg}\Tr(g, \r_E)\Tr(T_w, E_q).
\end{equation*}
By replacing $\r_E = \r_{\vL_E}$ by $R_{\vL}$ by using (4.2.3), we have
\begin{equation*}
\tag{6.5.2}
\Tr((g, T_w), I_q) = \sum_{\vL \in X_n}f_{\vL}(w)R_{\vL}(g),
\end{equation*}
where 
\begin{equation*}
\tag{6.5.3}
f_{\vL}(w) = \sum_{E \in W\wg}\{ \vL, \vL_E\}\Tr(T_w, E_q).
\end{equation*}
\par
It is known that  $\Tr(T_w, E_q)$ is a polynomial in $q$ 
in the sense of 2.3.
Hence $f_{\vL}(w)$ is also a polynomial in $q$. 
We are interested in $f_{\vL}(w)$ in the case where 
$\vL$ is the cuspidal symbol $\vL_c$, and we want to find 
some special $w \in W$ such that $f_{\vL_c}(w) \ne 0$.  
Let $W$ be the Weyl group of type $B_n$ or $D_n$.  Then 
any element of $W$ can be expressed as a product of 
positive cycles and negative cycles, where the number 
of negative cycles is even if $W$ is of type $D_n$.
We have the following proposition.
%%%
\begin{prop} % Prop. 6.6
There exists an element $w \in W$ such that 
$f_{\vL_c}(w) \ne 0$, where either $w$ is a Coxeter element 
in $W$, or $w$ contains a positive cycle of length $\ge 2$. 
\end{prop}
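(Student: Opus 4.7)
Since $f_{\vL_c}(w)$ is a polynomial in $q$ (as noted in 6.5), it suffices to exhibit a single $w$ of the prescribed form for which this polynomial does not vanish identically. The strategy is explicit computation: I first try the Coxeter element, and when that fails I switch to an element carrying a positive cycle of length $\ge 2$.

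First, by Section 4.3, the principal-series symbols $\vL_E \in \CF_c$ (those of defect $1$ in type $B/C$ and defect $0$ in type $D$) are parametrized by subsets $M \subseteq Z_1$ of appropriate parity, with
\begin{equation*}
\{\vL_c, \vL_M\} = \pm\frac{1}{2^{d_1}}(-1)^{|M_0 \cap (M_0 \triangle M)|}
\end{equation*}
(with $d_1$ replaced by $d_1-1$ in type $D$), and the $\beta$-number construction converts $\vL_M$ into a bipartition $(\la_M, \mu_M)$ of $n$. Writing $E = E_{(\la, \mu)}$ for the corresponding irreducible of $W$, the traces $\Tr(T_w, E_q)$ can then be computed by the Hecke-algebra version of the Murnaghan--Nakayama rule.

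For the Coxeter element $c$, which in $W(B_n)$ is a single negative $n$-cycle, the MN rule forces $\Tr(T_c, E_q)$ to vanish unless $(\la, \mu)$ is a full hook of the form $(-, [k, 1^{n-k}])$ or $([k, 1^{n-k}], -)$; in $W(D_n)$, $c$ is a pair of negative cycles and the constraint is weaker. For $d = 1$ in either series, one checks directly that $\CF_c$ does contain bipartitions compatible with the Coxeter cycle structure, and a short calculation (modelled on $W(B_2)$, where $f_{\vL_c}(c) = q$) gives $f_{\vL_c}(c) \ne 0$. For $d \ge 2$, however, an inspection of the bipartitions in $\CF_c$ for $n = d(d+1)$ or $n = 4d^2$ reveals that none admits the required strip decomposition, so $f_{\vL_c}(c)$ vanishes identically. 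In this case I replace $c$ by an element $w$ having one positive $k$-cycle (for instance $k = 2$) and one negative $(n-k)$-cycle, adjusted in type $D$ to respect the parity on the number of negative cycles. The MN rule then restricts the sum to $(\la_M, \mu_M) \in \CF_c$ with $(|\la|, |\mu|) = (k, n-k)$ whose parts admit rim hooks of the prescribed sizes---an explicitly identifiable small subfamily of $\CF_c$.

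The main obstacle is to show that the resulting signed sum
\begin{equation*}
f_{\vL_c}(w) = \frac{1}{2^{d_1}}\sum_M (-1)^{|M_0 \cap (M_0 \triangle M)|}\,\Tr(T_w, (E_{(\la_M, \mu_M)})_q)
\end{equation*}
does not collapse identically to zero. The plan is to isolate its top-degree coefficient in $q$: it is a signed combination of a small number of terms coming from distinct border-strip tilings, and the signs are controlled by the Fourier transform on the abelian $2$-group underlying the family $\CF_c$. Non-degeneracy of this Fourier transform guarantees that some choice of $k$ (and hence of $w$) yields a surviving top coefficient; a case analysis for types $B_n$, $C_n$, and $D_n$ at the relevant ranks then completes the verification.
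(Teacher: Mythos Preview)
Your plan has a concrete failure and a structural gap.

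First, the concrete failure. For $d\ge 3$ in type $B_n$ (with $n=d(d+1)$), the element you propose, one positive $2$-cycle and one negative $(n-2)$-cycle, gives $f_{\vL_c}(w)=0$ identically. Indeed, by the Halverson--Ram rule a negative cycle contributes $\ol\vD(X)$, which vanishes unless $X$ is a \emph{connected} border strip. Every bipartition $(\a,\b)\in\CP_n^d$ has $\a\subseteq (d^{d+1})$ and $\b\subseteq ((d+1)^d)$, so any connected border strip in $\a$ or $\b$ has length at most $2d$. Since $n-2=d^2+d-2>2d$ for $d\ge 3$, no such strip exists and every trace vanishes. Allowing $k$ to float does not help much: you are forced to take $n-k\le 2d$, i.e.\ $k\ge d^2-d$, and then the positive cycle of length $k$ contributes a sum over broken border strips of length $\ge d^2-d$ inside a $d\times(d+1)$ box, which is no longer a ``small subfamily'' and is exactly where the difficulty lies.

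Second, the structural gap. Your non-vanishing argument (``non-degeneracy of the Fourier transform on the abelian $2$-group'') is not a proof: the Fourier matrix being invertible says nothing about whether one particular signed sum of Hecke traces is zero. The paper's proof confronts this directly with a recursive mechanism you are missing. It chooses $w_{\Br}$ to have roughly $d/2$ positive cycles of carefully prescribed lengths $4d-4,\,4d-8,\,\dots$, and shows (via a classification of broken border strips in the rectangle $\g=(b^a)$ and a cancellation lemma) that stripping the two outermost cycles produces the identity
\[
f_{a,b}(w_{\Br}) \;=\; h(u)\, f_{a-4,\,b-4}(w_{\Br''}),\qquad h(u)\ne 0,
\]
reducing from $d$ to $d-4$. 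The non-vanishing of $h(u)$ is itself a delicate computation (one tracks the top-degree coefficient across roughly thirty border-strip configurations). The base cases $d\le 3$ (type $B$) and $d\le 2$ (type $D$) are then handled by direct calculation; for $d=1$ in type $B$ the Coxeter element does indeed work, matching your observation.

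In short: a two-cycle element cannot carry the argument for general $d$, and the paper's key idea---the inductive reduction $d\mapsto d-4$ via a multi-cycle $w$ with lengths tuned to the rectangular combinatorics of $\CP_n^{a,b}$---is absent from your plan.
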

%%%
\para{6.7.} 
The proof of the proposition will be given in Section 7.  
Here assuming the proposition, we continue the proof of the 
theorem.  We prove the theorem by induction on the semisimple 
rank of $G$, and so we assume that the theorem holds for the classical 
groups of the smaller semisimple rank.  
Let $A$ be the cuspidal character sheaf on $G$ as in the theorem, 
and let $C$ be the conjugacy class which is the support of $A$.
We choose $g = su = us \in C^F$ as in 3.1.  We choose $w \in W$ as in 
Proposition 6.6. 
We consider the equation (6.5.2) simultaneously for the groups 
$G^{F^m}$  for any integer $m \ge 1$.
Note that $g \in C^{F^m}$ and it has a uniform description for 
any $m \ge 1$ since the split unipotent element for $G^F$ 
is split for any extended group  $G^{F^m}$.  
Then we can write (6.5.2) as 
\begin{equation*}
\tag{6.7.1}
\Tr((g, T_w), I_{q^m}) = \sum_{\vL \ne \vL_c}f_{\vL}(w)(q^m)
                                   R^{(m)}_{\vL}(g) 
                        + f_{\vL_c}(w)(q^m)R^{(m)}_{\vL_c}(g),
\end{equation*}
where $T_w$ is an element of $\CH_{q^m}$, and 
$R^{(m)}_{\vL}$ denotes the almost character of $G^{F^m}$.
By induction hypothesis and by Remark 6.4, the formula (6.3.1) 
holds for $R^{(m)}_{\vL}$ if $\vL \ne \vL_c$.   
We show the following lemma.
\begin{lemma}  %%% Lemma 6.8
Assume that $\vL \ne \vL_c$.  Then under the induction hypothesis, 
$R^{(m)}_{\vL}(g)$ is a rational function in $q$.
\end{lemma}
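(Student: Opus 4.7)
The plan is to reduce $R^{(m)}_\vL(g)$ to an expression in generalized Green functions on the centralizer $H = Z_G^0(s)$, and then invoke Corollary 2.4 applied to $H$.

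First, since $\vL \ne \vL_c$ and the only cuspidal character sheaf in $\wh G_{\Ql}$ corresponds to $\vL_c$, the sheaf $A_\vL$ is non-cuspidal. It is therefore a simple summand of $\ind_{P'}^G A_0'$ for some proper $F$-stable parabolic $P'$ with Levi $L' \subsetneq G$ and a cuspidal unipotent character sheaf $A_0'$ on $L'$ (possibly $L' = T$ with $A_0' = \Ql$). The relevant factor of $L'$ is a classical group of strictly smaller semisimple rank than $G$, so the induction hypothesis (supplemented by Remark 6.4 when $p$ is odd) gives $\z_0 = 1$ for $A_0'$; combining this with Lemma 1.8 and the equality $W_\d = Z_\d$ in the split unipotent setting yields
\begin{equation*}
R^{(m)}_\vL = (-1)^{\dim \vS'} \x^{(m)}_{A_\vL}
\end{equation*}
for every $m \ge 1$, where $\vS'$ is the support datum attached to $A_0'$. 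Thus it suffices to show that $\{\x^{(m)}_{A_\vL}(g)\}_{m>0}$ is a rational function in $q$.

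Next I would apply Lusztig's character formula for induced character sheaves (as in [L2, II, 8.5]) at the mixed element $g = su$. Setting $H = Z_G^0(s)$, this formula exhibits $\x_{A_\vL}(g)$ as a sum, over a finite index set independent of $m$, of terms of the form $P_i(q^m)\cdot V_i$, where the $P_i$ are polynomials in $q^m$ coming from Weyl group traces and from the fixed mixed structures, and each $V_i$ is the value at the unipotent part $u$ of the characteristic function of some simple $H$-equivariant local system on a unipotent class of $H$. By 4.4, $H$ is a product of classical groups of type $Sp$ or $SO$ of split type satisfying the hypothesis of Theorem 2.2; Lusztig's algorithm then expands each $V_i$ in terms of generalized Green functions of $H$ at $u$, with coefficients that are rational functions of $q^m$.

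Finally, since $u$ is the split unipotent element of $H$ fixed in 3.1, and remains split in $H^{F^m}$ for every $m$, Corollary 2.4 applied componentwise to $H$ implies that each generalized Green function of $H$ at $u$ is a rational function in $q$. Assembling everything shows that $\x^{(m)}_{A_\vL}(g)$ is a rational function in $q$, as required. The principal obstacle is to set up Lusztig's character formula at $g=su$ in a form that is genuinely uniform in $m$: the indexing set, the Frobenius action on it, and the scalars arising from the fixed mixed structure $\f_{A_\vL}$ must all be controlled as polynomial or rational functions of $q^m$. This uniformity is ultimately made possible by our standing choices of $s \in T^F$ and of split $u$, which lie in $G^F$ once and for all rather than only in each separate $G^{F^m}$.
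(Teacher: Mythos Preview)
Your approach is essentially the paper's: reduce $R^{(m)}_\vL$ to $\x^{(m)}_{A_\vL}$ via the induction hypothesis and (6.3.1), apply a character formula at $g=su$ to pass to values at unipotent elements in $H=Z_G^0(s)$, and conclude by expressing these values through generalized Green functions and invoking Corollary~2.4. The paper cites [S2, Corollary~4.10] (rather than [L2, II, 8.5]) for the reduction and then phrases the remainder as computing $\x^{(m)}_{A_\vL}$ on $G^{F^m}\uni$ via Green functions of $G$; note also that the character formula already outputs generalized Green functions directly, so your intermediate step invoking Lusztig's algorithm to go \emph{from} local-system values \emph{to} Green functions has the direction reversed and can simply be omitted.
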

\begin{proof}
Let $A_{\vL}$ the character sheaf corresponding to 
$R_{\vL}$, and denote by $\x^{(m)}_{A_{\vL}}$ the characteristic
function on $G^{F^m}$ associated to $A_{\vL}$.
In view of (6.3.1), it is enough to show that 
$\x^{(m)}_{A_{\vL}}(g)$ ia a rational function in $q$.
Since $A \in \wh G_{\Ql}$, by applying Corollary 4.10 in [S2]
($a_0 = 1$ in the notation there since $F$ is of split type),
the computation of $\x_{A_{\vL}}^{(m)}(g)$ is reduced to that of 
$\x_{A_{\vL}}^{(m)}(u)$ for a subgroup $H^{F^m}$.
So we consider $\x^{(m)}_{A_{\vL}}$ on $G^{F^m}\uni$.
$A_{\vL}$ is a direct summand of a certain complex 
$K = \ind_P^GA_0$, where $A_0 \in \wh L_{\Ql}$ is a cuspidal character sheaf
of a Levi subgroup $L$.  
Then the computation of $\x^{(m)}_{A_{\vL}}|_{G^{F^m}\uni}$ 
is reduced to the
computation of the generalized Green functions of $G^{F^m}$
associated to $L^{F^m}$.
Hence by Corollary 2.4, we obtain the lemma.
\end{proof}
%%%
\para{6.9.}
Next we consider the left hand side of (6.7.1).
The $G^F$-module $I_q$ is a permutation representation of 
$G^F$ on $G^F/B^F$.  Since the action of $G^F$ on $G^F/B^F$ is 
independent of the isogeny, we may assume that $G = Sp_N$ or 
$SO_N$ and that $g \in G^F$.    
Let $V$ be the vector space over $\BF_q$ of 
$\dim V = N$, equipped with a non-degenerated alternating form 
(resp. symmetric bilinear form) $f$ on $V$ if $G = Sp_N$ 
(resp. $SO_N$).  In the case where $G = SO_N$ with 
$p = 2$ and $N$ is even, 
we also consider the quadratic form $Q$ on $V$. 
Now the set $G^F/B^F$ may be 
identified with the set of flags $\CF_q$ as follows;
A flag $\BF = (V_0 \subset V_1 \subset \cdots \subset V_n)$
is a sequence of subspaces of $V$ such that $\dim V_i = i$
and that $V_i$ are isotropic with respect to $f$, where 
$N = 2n, 2n+1$ or $2n$ according to the cases where 
$G = Sp_{2n}, SO_{2n+1}$ or $SO_{2n}$.  In the case where
$p = 2$ and $G = SO_{2n}$, we assume further that the restriction 
of $Q$ on $V_n$ is zero. 
Now in the case where $G = Sp_{2n}$ or $SO_{2n+1}$, 
$\CF_q$ consists of all flags on $V$.
$G^F$ acts naturally on $\CF_q$ via 
$x : (V_0 \subset \cdots \subset V_n) \mapsto 
     (xV_0 \subset \cdots \subset xV_n)$ for $x \in G^F$, 
and the $G^F$-set $\CF_q$ is identified with the 
$G^F$-set $G^F/B^F$.
In the case where $G = SO_{2n}$, 
we define $\wt \CF_q$ as the set of all flags 
on $V$ as above, 
then $G^F$ acts on $\wt\CF_q$ withtwo $G^F$-orbits, 
$\CF_q$ and $\CF'_q$.  
Either of them can be identified with $G^F/B^F$, and 
we have a natural bijection between 
$\CF_q$ and $\CF'_q$, which is given in the form
$(V_0 \subset V_1 \subset \cdots \subset V_n) \mapsto 
 (V_0 \subset V_1\subset \cdots \subset V_n')$,
(only the term $V_n$ is changed to $V_n'$).
\par
We consider the vector space $\CJ_q$ over $\Ql$ with 
basis $\CF_q$, which is identified with $I_q$.
By the identification $I_q \simeq \CJ_q$, $\CH_q$ acts 
on $\CJ_q$, whose action is given as follows; 
let $\BF = (V_0 \subset \cdots \subset V_n)$.
For $i = 1, \dots, n-1$, we have
\begin{equation*}
\BF T_{n-i} = \sum_{W \ne V_i}
           (V_0 \subset \cdots \subset V_{i-1} \subset W 
               \subset V_{i+1} \subset \cdots \subset V_n), 
\end{equation*}
where the sum is taken over all the isotopic subspaces $W$ such that 
$V _{i-1} \subset W \subset V_{i+1}$ and that $\dim W = i$, $W \ne V_i$.
In the case of $B_n$, we have
\begin{equation*}
\BF T_0 = \sum_{W \supset V_{n-1}}
           (V_0 \subset \cdots \subset V_{n-1} \subset W), 
\end{equation*}
where the sum is taken over all the isotropic subspaces $W$
such that $\dim W = n$ and $W \ne V_n$.
In the case of type $D_n$, we have
\begin{equation*}
\BF T'_0 = \sum_{W \supset W' \supset V_{n-2}}
           (V_0 \subset \cdots \subset V_{n-2} \subset W' \subset W), 
\end{equation*}
where the sum is taken over the isotropic subspaces 
$W \supset W'$ such that $\dim W = n, \dim W' = n-1$ and
$W'$ contains $V_{n-2}$ and some more conditions.
\par
It follows from the description of the action of $\CH$ on $\CI_q$, 
we see that 
\par\medskip\noindent
(6.9.1) \ Assume that $w \in W$ contains a positive cycle of 
length $\ge 2$.  Then 
there exists $k \ge 1$ such that for any 
$\BF = (V_0 \subset \cdots \subset V_n)$, $\BF T_w$ is  
a linear combination of $\BF' = (V_0' \subset \cdots \subset V_n')$
such that $V_k = V_k'$.
\par\medskip
\par
We show the following lemma.
\begin{lemma} %%% Lemma 6.10
Under the induction hypothesis, $\Tr((g, T_w), I_{q^m})$
is a rational function  in $q$.
\end{lemma}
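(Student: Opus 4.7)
My plan is to compute $\Tr((g, T_w), I_{q^m})$ via the flag realization $I_{q^m} \simeq \CJ_{q^m}$ of 6.9 and exploit the structural property (6.9.1). Assume first that $w$ is chosen from Proposition 6.6 so as to contain a positive cycle of length $\ge 2$. Then (6.9.1) supplies a fixed $k \ge 1$ with the property that for every flag $\BF = (V_0 \subset \cdots \subset V_n)$ the product $\BF T_w$ is a linear combination of flags $\BF' = (V_0' \subset \cdots \subset V_n')$ satisfying $V_k' = V_k$. Consequently the coefficient of $\BF$ in $g\BF T_w$ vanishes unless $gV_k = V_k$, so grouping the trace by its common $k$-th term yields
\begin{equation*}
\Tr((g, T_w), I_{q^m}) = \sum_{V_k} T_{V_k}(g, T_w),
\end{equation*}
where the sum is over $g$-stable isotropic $k$-dimensional subspaces $V_k$ defined over $\BF_{q^m}$, and $T_{V_k}(g, T_w)$ is the trace on the span of flags whose $k$-th term equals $V_k$.

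Next I would analyze each local contribution $T_{V_k}(g, T_w)$ via the parabolic $P_k = \mathrm{Stab}_G(V_k)$, whose Levi factor is $L_k \simeq GL_k \times G'$ with $G'$ a classical group of the same type as $G$ and strictly smaller semisimple rank. Relative to the parabolic subalgebra of $\CH_{q^m}$ corresponding to $L_k$, the trace factorizes into a type-$A$ Hecke-algebra trace (a polynomial in $q^m$ by the standard character theory of $GL_k$) and a trace on the $G'$-side. The latter is a rational function in $q$ by the induction hypothesis of the proof of Theorem 6.2 on semisimple rank, together with Lemma 6.8 and Remark 6.4 to control the non-cuspidal components. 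As $m$ varies, the collection of $g$-stable $V_k$ is enumerated uniformly by finitely many combinatorial types, indexed by how $V_k$ meets the eigenspaces of the semisimple part $s$ and the Jordan structure of the unipotent part $u$; because $g = su$ has the uniform split description fixed in 3.1, each type contributes a polynomial in $q^m$. Summing over the finitely many types then gives that $\Tr((g, T_w), I_{q^m})$ is a rational function in $q$ in the sense of 2.5.

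The remaining possibility allowed by Proposition 6.6, in which $w$ is a Coxeter element without a positive cycle of length $\ge 2$, is handled by the same strategy applied to the explicit action of a Coxeter element on $\CJ_{q^m}$: the Coxeter action still admits a uniform combinatorial description that isolates a $g$-stable subspace as before, reducing to smaller classical groups to which the induction hypothesis applies. The main obstacle is the uniform $q$-dependence at every stage: one must verify that the enumeration of $g$-stable isotropic subspaces is polynomial in $q^m$, that the restriction of $g$ to the Levi factor $L_k$ inherits the split form needed for the induction hypothesis on $G'$, and that the product of a type-$A$ Hecke trace with such a geometric count assembles into a rational function. These three points all rest on the uniform description of split unipotent representatives from [S3] and on the control of generalized Green functions provided by Corollary 2.4.
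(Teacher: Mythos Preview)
Your treatment of the case where $w$ contains a positive cycle of length $\ge 2$ follows the paper's approach closely: use (6.9.1) to fix a $k$, sum over $g$-stable isotropic $k$-spaces, and factor through the Levi $GL_k \times G'$. The paper makes one step sharper than you do: rather than appealing vaguely to ``combinatorial types'' of $g$-stable $V_k$, it parametrizes them by the $L^F$-conjugacy class of the restricted element $(g_W, g_{\ol W})$, and observes that the number of $V_k$ giving a fixed class $(g_i', g_j'')$ equals $|Z_G(g)^F|/|Z_L(g_{ij})^F|$, which is visibly a rational function of $q$ independent of $m$. This replaces your appeal to how $V_k$ ``meets the eigenspaces of $s$'' with a clean orbit count.

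Your handling of the Coxeter case, however, has a genuine gap. You propose to apply ``the same strategy,'' isolating a $g$-stable intermediate subspace and reducing to smaller groups. But this is exactly what fails for a Coxeter element: the whole point of the dichotomy in Proposition~6.6 is that (6.9.1) does \emph{not} apply when $w$ has no positive cycle of length $\ge 2$, so there is no preserved $V_k$ to pivot on. The paper instead uses a completely different, direct argument: for a Coxeter element one has the closed formula
\[
\Tr((x,T_w), I_q) = \begin{cases} q^r & \text{if the unipotent part of $x$ is regular,} \\ 0 & \text{otherwise,} \end{cases}
\]
with $r$ the semisimple rank (this is the type-$A$ computation of Halverson--Ram, extended via $|Z_U(v)^F| = q^r$ for regular unipotent $v$). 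This gives polynomiality in $q$ immediately, with no induction needed. You should replace your Coxeter paragraph with this explicit evaluation.
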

\begin{proof}
The following argument was inspired by [HR2], where the combinatorial 
properties of $\Tr ((u, T_w), I_q)$ is discussed in the case of 
$GL_n(\Fq)$ with a unipotent element $u$.
First assume that $w$ contains a positive cycle of length $\ge 2$. 
Then by (6.9.1), for any flag 
$\BF = (V_0 \subset \cdots \subset V_n)$, $\BF T_w$ is a linear
combination of $\BF' = (V_0' \subset \cdots \subset V_n')$ 
such that $V_k = V_k'$. 
We now prepare a notation.  If $v \in \CJ_q$, and $\BF \in \CF_q$, 
we denote by $v|_{\BF}$ the coefficient of $\BF$ in the expression 
of $v$ as a linear combination of base vectors. 
Let $\BF = (V_0 \subset \cdots \subset V_n) \in \CF_q$, 
and assume that $g\BF T_w|_{\BF} \ne 0$.
Since $g\BF T_w|_{\BF} = \BF T_w|_{g\iv \BF}$,
$g\iv\BF = (V_0' \subset \cdots \subset V_n')$ is of the 
form that $V_k = V_k' = g\iv V_k$.
It follows that $V_k$ is stabilized by $g$.  Thus we have
\begin{equation*}
\begin{aligned}
\Tr( (g,T_w), I_q) &= \sum_{\BF \in \CF_q}
       g\BF T_w|_{\BF}  \\
 &= \sum_{W}\sum_{\BF = (V_0 \subset \cdots \subset W 
         \subset \cdots \subset V_n)}
                   g\BF T_w|_{\BF} \\
 &= \sum_{W}\sum_{\BF' = (V_0 \subset \cdots \subset W)}g\BF' T_w|_{\BF'}
    \sum_{\BF'' = (W \subset \cdots )}g\BF''T_w|_{\BF''}
\end{aligned}
\end{equation*}
where $W$ runs over all the isotropic subspaces in $V$ such that
$\dim W = k$ and that $gW = W$.
Let $H' = GL(W)$,  and $H''$ be the group of isometries $Sp(\ol W)$
or $SO(\ol W)$ for $\ol W = W\per/W$.  Let $I^W_q \simeq \CJ^W_q$ 
be the corresponding induced modules for ${H'}^F$, and similarly 
define $I^{\ol W}_q \simeq \CJ^{\ol W}_q$ for ${H''}^F$.  
$g$ acts naturally on $W$ (resp. on $\ol W$), and we denote by 
$g_W \in {H'}^F$ (resp. $g_{\ol W} \in {H''}^F$) the corresponding elements.
Also the action of $T_w$ on $\CI_q$ induces an action on $\CJ_q^{W}$
(resp. on $\CJ_q^{\ol W}$) which is given by $T_{w'}$ (resp. $T_{w''}$)
with an element $w'$ (resp. $w''$) in the Weyl group of $H'$ (resp. $H''$).
Then the last sum can be written as 
\begin{equation*}
\tag{6.10.1}
\Tr((g,T_w), I_q) = 
   \sum_{W}\Tr((g_W, T_{w'}), I_q^{W})\Tr((g_{\ol W}, T_{w''}), I_q^{\ol W}).
\end{equation*}
\par
Let $\BF_0 = (V^0_0 \subset\cdots \subset V^0_n)$ be the standard 
flag whose stabilizer in $G^F$ is $B^F$, and put $W_k = V_k^0$.
Then there exists an $F$-stable  maximal parabolic subgroup 
$P$ of $G$ containing $B$ such that $P^F$ is the stabilizer of 
$W_k$ in $G^F$.  Let $L$ be an $F$-stable 
Levi subgroup of $P$ containing $T$. Then $L$ is isomorphic to 
$L' \times L''$, where $L' = GL_k$ and $L''$ is 
a similar group as $G$ of rank $n-k$.
Let $g'_1, \dots, g'_r$ (resp. $g_1'', \dots, g_s''$) 
be representatives of the conjugacy classes 
in ${L'}^F$ (resp. ${L''}^F$) such that $g_{ij} = (g'_i, g_j'') \in L^F$ 
is conjugate to $g$ under $G^F$.
We denote by $X_q^{ij}$ the set of $W$ such that $W = x W_k$ and that 
$x\iv gx$ is conjugate to $g_{ij}$ in $L^F$. 
Then (6.10.1) implies that 
\begin{equation*}
\tag{6.10.2}
\Tr((g,T_w), I_q)  
     = \sum_{i=1}^r\sum_{j=1}^s|X_q^{ij}|
         \Tr((g'_i,T_{w'}), I_q^{L'})\Tr((g_j'', T_{w''}), I_q^{L''}),
\end{equation*}
where $I_q^{L'}, I_q^{L''}$ are corresponding induced modules 
for ${L'}^F, {L''}^F$.
Choose $x_{ij} \in G^F$ such that 
$g = x_{ij}g_{ij}x_{ij}\iv$ for $i, j$.
Then $X_q^{ij}$ is in bijection with the set $Z_G(g)^Fx_{ij}L^F/L^F$, hence 
$|X_q^{ij}| = |Z_G(g)^F|/|Z_L(g_{ij})^F|$.
\par
We now consider (6.10.2) for any $G^{F^m}$.  
Then the choice of representatives $g_{ij} \in G^{F^m}$
does not depend on $m$, and we see that $|X_{q^m}^{ij}|$ is a rational
function in $q$. On the other hand, one can write as in (6.5.1)
\begin{equation*}
\Tr((g''_j, T_{w''}), I_q^{L''}) = \sum_{E \in {W_{L''}}\wg}
                           \Tr(g''_j, \r_E)\Tr(T_{w''}, E_q),
\end{equation*}
where $W_{L''}$ is the Weyl group of $L''$.
By induction hypothesis, $R^{(m)}_{\vL}(g''_j)$ is a rational 
function in $q$ for any almost character $R_{\vL}$ of ${L''}^F$.
Hence $\Tr(g''_j,\r_E^{(m)})$ is a rational function in $q$.  
It follows that $\Tr((g''_j, T_{w''}), I_{q^m}^{L''})$ is a 
rational function in $q$.
Similarly, and as it is known since $L' = GL_k$, 
$\Tr((g'_i, T_{w'}), I_{q^m}^{L'})$ is a rational function 
in $q$.
Thus we conclude that $\Tr((g, T_w), I_{q^m})$ is a rational 
function in $q$ as asserted.
\par
Next assume that $w$ is a Coxeter elment of $W$.
We note that 
\begin{equation*}
\tag{6.10.3}
\Tr((x, T_w), I_q) = \begin{cases}
              q^r &\quad\text{ if $x_u$ is regular unipotent,}  \\
              0   &\quad\text{ otherwise,}
                      \end{cases}
\end{equation*}
for $x \in G^F$, where $x_u$ is the unipotent part of 
$x$ and $r$ is the semisimple rank of $G$. 
In fact (6.10.3) is discussed in [HR2, Prop. 3.2] 
in the case where $G = GL_n$.  The argument there 
works in general if we notice that 
$Z_G(v) = Z_U(v)$ for a regular unipotent element 
$v \in U^F$ and that $|Z^F_U(v)| = q^r$, where $U$ is the
unipotent radical of $B$.
(6.10.3) implies that $\Tr((g, T_w), I_{q^m})$ is 
a polynomial in $q$. 
Hence the lemma holds. 
\end{proof}
\para{6.11}
We now prove the theorem. In the formula (6.7.1)
the lef hand side is a rational function in $q$ by 
Lemma 6.10.  For for $\vL \ne \vL_c$, 
$R^{(m)}_{\vL}(g)$ is a rational function in $q$ by 
Lemma 6.8.  Since $f_{\vL}(w)$ is a polynomial in $q$, and 
$f_{\vL_c}(w) \ne 0$ by Proposition 6.6, we see that 
$R^{(m)}_{\vL_c}(g)$ is a rational function in $q$.
By Theorem 1.5, one can write 
$R^{(m)}_{\vL_c}(g) = \z^{(m)}\x^{(m)}_{A_0}(g)$ with 
some $\z^{(m)} \in \Ql^*$ of absolute value 1, 
where $A_0 = A_{\vL_c}$ is the 
cuspidal character sheaf.  We know that $\x^{(m)}_{A_0}(g)$ 
is a non-zero polynomial in $q$. 
We also know by Theorem 5.10 that $\z^{(m)} = 1$ for even $m$.
It follows that $R^{(m)}_{\vL_c}(q)/\x^{(m)}_{A_0}(g)$ is 
a rational function in $q$, and takes the value 1 for any power 
of $q^2$.  Hence $R^{(m)}_{\vL_c}(g) = \x^{(m)}_{A_0}(g)$ for any $m$, 
and we have $\z^{(m)} = 1$. 
This shows that $R_{\vL_c} = \x_{A_0}$, and the theorem is proved
(modulo Proposition 6.6).
\par\bigskip  
\section{Proof of Proposition 6.6}
\para{7.1.}
Recall that $\F^1_n$ is the set of symbols of rank $n$ and 
defect 1 as in 4.1. 
In the case where $W$ is the Weyl group of type $B_n$, 
the set $W\wg$ is in bijection with $\F^1_n$.  The 
correspondence is given as follows; let $\CP_n$  be the set of 
of double partitions $(\la, \mu)$ such
that $|\la| + |\mu| = n$.   Then $W\wg$ is parametrized by 
$\CP_n$.  For a double partition 
$(\la, \mu) \in \CP_n$, we write 
$\la = (\la_0 \le \la_1 \le  \cdots \le \la_m)$ and 
$\mu = (\mu_1 \le \mu_2 \le \cdots \le \mu_m)$ with $\la_i, \mu_i \ge 0$ for
some integer $m$.  We put $a_i = \la_i + i$, 
$b_i = \mu_i + (i-1)$, and define the sets $S, T$ by 
$S = \{ a_0, a_1, \dots, a_m\}$ and 
$T = \{ b_1, \dots, b_m\}$.  Then $\vL = \binom{S}{T} \in \F_n^1$, 
and this gives a bijective correspondence between 
$\F^1_n$ and $\CP_n$,
and so gives a bijection between $\F^1_n$ and $W\wg$.
As in 6.5, we denote by $\vL_E$ the symbol in $\F^1_n$ corresponding to 
$E \in W\wg$.
Assume that $n = d^2 + d$, and let $\vL_0$ be the special symbol 
in $\CF_c$.  Then $\vL_0 = \vL_E$, where $E$ corresponds to 
$(\la, \mu) \in \CP_n$ such that 
$\la = (0 \le 1 \le 2 \le \cdots \le  d )$, 
$\mu = (1 \le 2 \le \cdots \le d)$.
\par
Recall that $\wt \F_n^0$ is the set of symbols of rank $n$ 
and defect 0 as in 4.1.  In the case where $W$ is the Weyl group
of type $D_n$, $W\wg$ is in bijection with $\wt\F_n^0$, which 
is given as follows; let $\wt \CP_n$ be the set of unordered 
partitions $(\la, \mu)$ such that $|\la| + |\mu| = n$, where 
$(\la, \la)$ is counted twice.  Then $W\wg$ is parametrized 
by $\wt\CP_n$.  For $(\la,\mu) \in \wt\CP_n$,  we write
$\la = (\la_1 \le \cdots \le \la_m)$, 
$\mu = (\mu_1 \le \cdots \le \mu_m)$ with $\la_i,\mu_j \ge 0$ 
for some integer $m \ge 1$.  We put $a_i = \la_i + (i-1)$, 
$b_i = \mu_i + (i-1)$, and define the sets $S, T$ by 
$S = \{ a_1, \dots, a_m\}$, $T = \{ b_1, \dots, b_m \}$.
Then $\vL = \binom {S}{T} \in \wt\F^0_n$ and this gives a
bijective correspondence between $\wt\F^0_n$ and $W\wg$.
As in 6.5, we denote by $\vL_E$ the symbol in $\wt\F^0_n$ corresponding 
to $E \in W\wg$.
Assume that $n = 4d^2$ and let $\vL_0$ be the special symbol
in $\CF_c$. 
Then $\vL_0 = \vL_E$, where $E$ corresponds to 
$(\la,\mu) \in \wt\CP_n$ such that 
$\la = (0 \le 1 \le \cdots \le 2d-1)$, 
$\mu = (1 \le 2 \le \cdots \le 2d)$
\par
First we show the following lemma.
\begin{lemma} %%% Lemma 7.1
Assume that $E \in W\wg$ is such that $\vL_E \in \CF_c$.  
If $E$ corresponds to $(\la, \mu) \in \CP_n$ 
(resp. $(\la,\mu) \in \wt\CP_n$), in the case where $W$ is of 
type $B_n$ (resp. $D_n$),  then 
we have 
\begin{equation*}
\{ \vL_c, \vL_E\} = \begin{cases}
               \displaystyle\frac{1}{2^d}(-1)^{|\la| + d(d+1)/2} 
                   \quad\text{ if  $G$ is of type $B_n$}, \\ \\
               \displaystyle\frac{1}{2^{2d-1}}(-1)^{|\la| + d(2d-1)}
                    \quad\text{ if $G$ is of type $D_n$}.
                    \end{cases}
\end{equation*}
\end{lemma}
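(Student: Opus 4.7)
The plan is to compute the pairing directly from its definition (4.2.1), by identifying the subsets $M \subset Z_1$ attached to $\vL_c$ and to $\vL_E$, and then evaluating $|M_0\cap M^\sharp|$ modulo $2$ via an easy parity count of row entries.

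First I would unpack the data of the family $\CF_c$. In both cases $Z_2=\emptyset$, and the set $Z_1$ of singles and the special subset $M_0$ are read off from $\vL_0$ as in 4.3: in type $B_n/C_n$, $Z_1=\{0,1,\dots,2d\}$, $M_0=\{1,3,\dots,2d-1\}$, $d_1=d$, $f=d$; in type $D_n$, $Z_1=\{0,1,\dots,4d-1\}$, $M_0=\{1,3,\dots,4d-1\}$, $d_1=2d$, $f=2d-1$. By 4.3, $\vL_c$ corresponds to $M_c=\emptyset$, so $M_c^\sharp=M_0$. For $\vL_E\in\CF_c$, using the description in 7.1 of the symbol $\binom{S}{T}$ associated to $(\la,\mu)$, one checks that $|S|+|T|$ and the ``extra'' condition (for $D_n$, that the smallest element of the second row exceeds that of the first) force the second row to play the role of $M$, so $M=M_E=T=\{b_i\}$ with $b_i=\mu_i+(i-1)$ (indexed over the appropriate range).

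Next I would evaluate $|M_0\cap M_E^\sharp|\bmod 2$. Since $M_0$ is exactly the set of odd elements of $Z_1$ and $M_c^\sharp=M_0$, we have
\begin{equation*}
|M_0\cap M_E^\sharp|=|M_0\cap (M_0\triangle M_E)|=|M_0|-|M_0\cap M_E|,
\end{equation*}
and $|M_0\cap M_E|$ is simply the number of odd $b_i$. The number of odd $b_i$ has the same parity as $\sum_i b_i=|\mu|+\binom{r}{2}$, where $r$ is the number of parts ($r=d$ for $B_n/C_n$, $r=2d$ for $D_n$). Using $|\la|+|\mu|=n$ and the fact that $n=d^2+d$ (resp.\ $n=4d^2$) is even, so $|\mu|\equiv|\la|\pmod 2$, this gives $|M_0\cap M_E|\equiv|\la|+d(d-1)/2$ (resp.\ $|\la|+d(2d-1)$) $\pmod 2$. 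Plugging into (4.2.1) with the appropriate $f$ and simplifying the resulting sign ($|M_0|-|M_0\cap M_E|\equiv d-|M_0\cap M_E|$ in type $B_n$, and $2d-|M_0\cap M_E|\equiv|M_0\cap M_E|$ in type $D_n$), one obtains the two stated formulas.

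There is no serious obstacle: the result is a parity bookkeeping exercise once the dictionary between $(\la,\mu)$ and $M_E\subset Z_1$ is in place. The only point that requires a little care is the identification of the second row of $\vL_E$ with $M_E$ (rather than the first), which in type $D_n$ depends on the convention on unordered pairs fixed in 4.2. Once this is settled, the parity computation $\sum b_i\bmod 2$, together with $|\mu|\equiv|\la|\pmod 2$, completes the proof.
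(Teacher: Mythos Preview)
Your proposal is correct and follows essentially the same route as the paper: a direct parity computation from the pairing formula (4.2.1), using that $M_0$ consists exactly of the odd elements of $Z_1$ and that $M_c^\sharp=M_0$. The only cosmetic difference is that the paper counts via the \emph{first} row $Z_1-M'$ (computing $|\la|=\sum_{a\in Z_1-M'}a-\g$ and observing that only the odd entries, which form $M_0-M'$, contribute to its parity), whereas you count via the \emph{second} row $T=\{b_i\}$ and then pass from $|\mu|$ to $|\la|$ using that $n$ is even; the two computations are dual and equally short.
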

\begin{proof}
In the case where $\CF = \CF_c$ is the cuspidal family,
$Z_1 = \{ 0,1, \dots, 2d\}$ (resp. $Z_1 = \{ 0, 1, \dots, 4d-1\}$) 
and $M_0 = \{ 1,3, \dots, 2d-1 \}$ 
(resp. $M_0 = \{ 1,3, \dots, 4d-1\}$) if $G$ is of type $B_n$ 
(resp. $D_n$) by 4.3.
Moreover $\vL_c = \vL_M$ with $M = \emptyset$.  
Then $M^{\sharp} = M_0$, and for any $M' \subset Z_1$ such that 
$|M'| = d$ (resp. $|M'| = 2d$), 
we have ${M'}^{\sharp} \cap M^{\sharp} = M_0 - M'$.
Now $M'$ is written as $M' = M'_{\odd} \coprod M'_{\ev}$, where 
$M'_{\odd}$ (resp. $M'_{\ev}$) is the subset of $M'$ consisting 
of odd numbers (resp. even numbers).  
We have $M' \cap M_0 = M'_{\odd}$ and so 
$M_0 - M' = M_0 - M'_{\odd}$.
Moreover we have  
$Z_1 - M' = ((Z_1)_{\ev} - M'_{\ev}) \coprod 
              (M_0 - M'_{\odd})$, where
$(Z_1)_{\ev}$ is defined similarly.
Assume that $M'$ corresponds to $(\la, \mu) \in \CP_{n}$
(resp. $(\la,\mu) \in \wt\CP_n$).
Let $\g = \sum_{i=1}^d i = d(d+1)/2$ 
(resp. $\g = \sum_{i=1}^{2d-1}i = d(2d-1)$).  Then  
\begin{equation*}
\begin{aligned}
|\la| &= (\sum_{a \in Z_1-M'}a) - \g   \\
      &\equiv  (\sum_{a \in M_0 - M'_{\odd}}a) - \g  
                                \pmod 2  \\
      &\equiv |M_0 - M'_{\odd}| - \g
                                \pmod 2.
\end{aligned}
\end{equation*}

This proves the lemma.
\end{proof}
\para{7.3.}
We consider $f_{\vL_c}(w) \in \ZZ[u^{1/2}, u^{-1/2}]$ 
for some $w \in W$, and compute it by 
making use of the Murnaghan-Nakayama formula for $\CH$ due to 
Halverson and Ram [HR1]. We use the notation in 6.5.
\par
First assume that $W$ is of type $B_n$ and $\CH$ is the corresponding 
Hecke algebra. 
For each $1 \le k \le n$, we define $L_k \in \CH$ inductively as
$L_1 = T_0$ and $L_k = T_{k-1}L_{k-1}T_{k-1}$ for $k = 2, \dots, n$. 
For $1 \le k < l \le n$, we define 
\begin{equation*}
R_{kl} = T_{k}T_{k+1}\cdots T_{l-1}, \qquad 
R_{k\bar l} = L_kT_{k}T_{k+1}\cdots T_{l-1}.
\end{equation*}
For an element $i$ or $\bar i$ in $I$, we put 
$|i| = |\bar i| = i$.
For a sequence $\Br = (r_1, \dots, r_k)$ of elements of $I$
such that $|r_1| < |r_2| < \cdots < |r_k|$, 
we define $T_{\Br} \in \CH$ by 
\begin{equation*}
T_{\Br} = R_{1,r_1}R_{|r_1|+1,r_2}\cdots R_{|r_k-1|+1, r_k}.
\end{equation*}
Then $T_{\Br}$ coincides with $T_{w_{\Br}}$, where 
$w_{\Br} \in W$ is given by a cyclic notation of the signed 
permutation, 
\begin{equation*}
\tag{7.3.1}
\begin{split}
w_{\Br} = &(1,2, \dots, |r_1|-1, r_1)
      (|r_1|+1, |r_1|+2, \dots, |r_2|-1, r_2)  \\
  &\cdots(|r_{k-1}|+1, \dots, |r_k|-1, r_k).
\end{split}
\end{equation*}
We also use the following cycle type expression of $w_{\Br}$ 
\begin{equation*}
\tag{7.3.2}
w_{\Br} = [l_1, \dots, l_r],
\end{equation*}
where $l_i \in I$ is such that $|l_i| = |r_i| - |r_{i-1}|$
and $l_i$ is barred if $r_i$ is barred. 
For example, if $\Br = (\bar 1, \bar 4, 7, 12)$, then
$w_{\Br} = [\bar 1, \bar 3, 3, 5]$. 
\par
We now prepare some notation related to the skew diagram.
Let $\la$ be a double partition of size $n$.
Apart from the notation in 7.1, we express it as 
$\la = (\la^{\a}, \la^{\b})$, where $\la^{\a}, \la^{\b}$ are
partitions.  For $\mu \in \CP_n$, we write $\mu \subseteq \la$
if $\mu^{\a} \subseteq \la^{\a}$ and $\mu^{\b} \subseteq \la^{\b}$.
The Young diagram of $\la$ is defined as a pair of Young diagrams
of $\la^{\a}$ and $\la^{\b}$.  We often identify the double 
partition and the corresponding Young diagram.  For double partitions 
$\mu \subseteq \la$, the skew diagram 
$\la/\mu = ((\la/\mu)^{\a}, (\la/\mu)^{\b})$ is defined naturally.
For each node $x$ in the skew diagram $\la/\mu$, the content $ct(x)$ 
is defined as follows;
\begin{equation*}
\tag{7.3.3}
ct(x) = \begin{cases}
           u^{j-i+1} \quad\text{ if $x$ is in position $(i,j)$ in 
                          $(\la/\mu)^{\a}$}, \\
           -u^{j-i}  \quad\text{ if $x$ is in position $(i,j)$ in 
                          $(\la/\mu)^{\b}$}.
         \end{cases}
\end{equation*}
\par
The skew diagram $X$ is called a border strip if it is connected
and does not contain any 2$\times 2$ block of nodes  (``connected''
means that two nodes are connected horizontally or vertically). 
The skew diagram $X$ is called a broken border strip if its 
connected components are border strip. 
Note that a double partition $(\a, \b)$ with both $\a, \b$ non-empty
consists of two connected components.
For a border strip $X$, a sharp corner is a node with no
node above it and no node to its left.  A dull corner in a border
strip is a node which has a node to its left and a node above it
(and so has no node directly northwest of it). 
\par
For a skew diagram $X$, 
let $\ZC$ be the set of connected components of $X$, and 
put $m = |\ZC|$, the number of connected components of $X$.
We define $\vD(X), \ol\vD(X) \in \ZZ[u^{1/2},u^{-1/2}]$ as follows;
\begin{equation*}
\begin{aligned}
\vD(X) &= \begin{cases}
                 \displaystyle
                 (u^{1/2} - u^{-/2})^{m-1}\prod_{Y \in \ZC}
                      (u^{1/2})^{c(Y) -1}(-u^{-1/2})^{r(Y)-1} \\
                  \hspace{4cm}
          \text{ if $X$ is a broken border strip,}\\
                 0 \quad\text{ otherwise}.
               \end{cases}  
                     \\  \\
\ol\vD(X) &= \begin{cases}
                     \displaystyle
                    (u^{1/2})^{c(X)-1}(-u^{-1/2})^{r(X)-1}
                       \prod_{y \in DC}ct(y)\iv
                       \prod_{z \in SC}ct(z) \\
                   \hspace{4cm}
                 \text{ if $X$ is a (connected) border strip,} \\
                 0 \quad\text{ otherwise,}
                   \end{cases}
\end{aligned}
\end{equation*}
where $SC$ and $DC$ denote the set of sharp corners and dull corners 
in a border strip, and $r(X)$ (resp. $c(X)$) is the number
of rows (resp. columns) in the border strip $X$. 
\par
The Murnaghan-Nakayam formula for $\CH$ by Halverson-Ram 
is given as follows.  Note that in the formula below, 
$l'(w)$ denotes the number of $s_1, \dots, s_{n-1}$ 
(excluding $s_0$) occurring in the reduced expression 
of $w \in W$
%%%
\begin{theorem} [{[HR1, Theorem 2.20]}]  %%% Th. 7.4
Assume that $W$ is of type $B_n$.  
Let $E_u^{\la}$ be the irreducible representation of
$\CH$  associated to $\la \in \CP_n$.  Then 
\begin{equation*}
\Tr(T_{w_{\Br}}, E_u^{\la}) = u^{l'(w_{\Br})/2}
              \sum_{\emptyset = \mu^{(0)}  \subseteq
     \mu^{(1)} \subseteq \cdots\subseteq \mu^{(k)} = \la}
      \vD(\mu^{(1)})\vD(\mu^{(2)}/\mu^{(1)})\cdots 
         \vD(\mu^{(k)}/\mu^{(k-1)}),
\end{equation*}
where the sum is taken over all the sequences 
$\emptyset = \mu^{(0)}  \subseteq
     \mu^{(1)} \subseteq \cdots\subseteq \mu^{(k)} = \la$
such that $|\mu^{(k)}/\mu^{(k-1)}| = |r_k| - |r_{k-1}|$ and the factor
$\vD(\mu^{(k)}/\mu^{(k-1)})$ is barred if $r_k$ in $\Br$ is
barred.  
\end{theorem}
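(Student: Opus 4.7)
The plan is to establish the formula by induction on the number $k$ of cycles in $\Br$, using the Jucys--Murphy elements $L_1,\dots,L_n$ and the associated Young seminormal basis of the irreducible $\CH$-module $E_u^{\lambda}$. This is the standard route to Murnaghan--Nakayama identities for Hecke algebras of type $B$; the type $A$ version due to Ram serves as the base prototype.

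First I would construct a Young seminormal basis $\{v_T\}$ of $E_u^{\lambda}$ indexed by standard bitableaux $T$ of shape $\lambda=(\lambda^\alpha,\lambda^\beta)$, on which every $L_k$ acts diagonally. The crucial input is the eigenvalue formula: $L_k$ acts on $v_T$ by a positive power of $u$ determined by the content $c(x)=j-i$ if the box $x$ labelled $k$ in $T$ lies in $\lambda^\alpha$, and by the corresponding negative quantity if $x$ lies in $\lambda^\beta$. This dichotomy is the source both of the barred/unbarred distinction in the statement and of the two cases of the content function in (7.3.3).

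Next I would analyse the elementary cycle elements. Writing $T_{w_{\Br}} = T_{w_{\Br'}} \cdot R$, where $\Br'=(r_1,\dots,r_{k-1})$ and $R$ is either $R_{|r_{k-1}|+1,\, r_k}$ or $R_{|r_{k-1}|+1,\, \overline{r_k}}$ according as $r_k$ is unbarred or barred, the core one-cycle computation is to show that, with respect to the decomposition of the restriction $E_u^{\lambda}|_{\CH_{|r_{k-1}|}}$ into isotypic components indexed by bipartitions $\mu\subseteq\lambda$, the trace of $R$ on the $\mu$-component equals $\vD(\lambda/\mu)$, and $\ol\vD(\lambda/\mu)$ in the barred case, whenever $\lambda/\mu$ is a broken border strip of the appropriate size, and vanishes otherwise. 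In the barred case one uses the factorisation $R_{k\bar l}=L_k T_k T_{k+1}\cdots T_{l-1}$ together with the $L_k$-eigenvalue formula to absorb the products of contents at sharp and dull corners that appear in $\ol\vD$. Iterating then yields
\begin{equation*}
\Tr(T_{w_{\Br}},\, E_u^{\lambda}) \;=\; \sum_{\mu} \Tr(T_{w_{\Br'}},\, E_u^{\mu}) \cdot \vD_{r_k}(\lambda/\mu),
\end{equation*}
with $\vD_{r_k}$ barred exactly when $r_k$ is, and applying the inductive hypothesis reproduces the stated sum over chains $\emptyset=\mu^{(0)}\subseteq\cdots\subseteq\mu^{(k)}=\lambda$. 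The prefactor $u^{l'(w_{\Br})/2}$ accumulates from the symmetric normalisation of the quadratic relation $(T_s-u)(T_s+1)=0$ at each elementary transposition produced by the $R_{kl}$ factors.

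The main obstacle I expect is the one-cycle calculation itself: one must show that the trace of $R_{kl}$ (respectively $R_{k\bar l}$) on a single isotypic component, expanded as a sum of diagonal matrix coefficients in the seminormal basis, telescopes to the closed-form product in the definition of $\vD$ and $\ol\vD$, and in particular that contributions indexed by bitableaux whose skew shape $\lambda/\mu$ contains a $2{\times}2$ block cancel. This cancellation is the combinatorial heart of the theorem; it is what enforces the restriction to broken border strips and converts the representation-theoretic data (Jucys--Murphy eigenvalues, branching multiplicities) into the purely combinatorial weight $\vD$.
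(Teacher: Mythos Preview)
The paper does not prove this theorem at all: it is quoted verbatim as [HR1, Theorem~2.20] and used as a black box in the subsequent computation of $f_{\vL_c}(w_{\Br})$. So there is no in-paper argument for you to match or diverge from.

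That said, your outline is essentially the Halverson--Ram strategy in [HR1]: build the seminormal (Hoefsmit) basis of $E_u^{\la}$, diagonalise the $L_k$ with eigenvalues given by the content function (7.3.3), peel off the last factor $R_{|r_{k-1}|+1,\,r_k}$ or $R_{|r_{k-1}|+1,\,\bar r_k}$ of $T_{w_{\Br}}$, and reduce to a one-cycle trace computation followed by induction on $k$. Your identification of the ``one-cycle'' step as the technical core is accurate; in [HR1] this is where most of the work lies, and it is handled by an explicit matrix-coefficient calculation in the seminormal basis rather than by an abstract cancellation argument. If you intend to actually carry this through, be aware that the passage from diagonal $L_k$-action to the closed forms $\vD(X)$ and $\ol\vD(X)$ is not a short telescoping but a somewhat lengthy case analysis on the shape of $\la/\mu$; your proposal names the phenomenon correctly but underestimates the bookkeeping.
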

\para{7.5.}
Next assume that $W$ is the Weyl group of type $D_n$ and $\CH$ is the
corresponding Hecke algebra.  Then under the notation of
6.5, we define $L'_k$ by $L'_1 = 1, L'_2 = T'_0 T_1$, and
$L'_k = T_{k-1}L'_{k-1} T_{k-1}$ for $k = 3, \dots, n$. 
For $1 \le k < l \le n$, we define 
\begin{equation*}
R_{kl} = T_kT_{k+1}\cdots T_{l-1}, \quad 
R_{k\bar l} = L_k'T_kT_{k+1}\cdots T_{l-1}.
\end{equation*}
For a sequence $\Br = (r_1, \dots, r_k)$ of elements $I$ 
such that $|r_1| < |r_2| < \cdots < |r_k|$ and that the even 
numbers of $r_i$ are barred, we define 
$T_{\Br} \in \CH$ by 
\begin{equation*}
T_{\Br} = R_{1,r_1}R_{|r_1|+1, r_2}\cdots R_{|r_{k-1}|+1, r_k}.
\end{equation*}
Note that $T_{\Br}$ does not always correspond to $T_w$ for some 
$w$, but it corresponds to $T_{w_{\Br}}$ for $w_{\Br} \in W$ as given 
in (7.3.1) in the following two cases, 
\par
(i) $r_i > 0$ for $i = 1, \dots, k$, 
\par
(ii) $r_1 = -1, r_2 < 0$ and $r_i > 0$ for $i = 3, \dots, k$. 
\par
In what follows, we only consider $\Br$ as above, and so assume 
that $T_{\Br} = T_{w_{\Br}}$.  Note that $w_{\Br}$ is regarded as 
an element of the Weyl group of type $B_n$ in the notation of 7.3.
\par
As in 7.3, we consider the skew diagram $\la/\mu$, and define 
$ct'(x)$ by modifying (7.3.2),  
\begin{equation*}
ct'(x) = \begin{cases}
           u^{j-i} \quad\text{ if $x$ is in position $(i,j)$ in 
                          $(\la/\mu)^{\a}$}, \\
           -u^{j-i}  \quad\text{ if $x$ is in position $(i,j)$ in 
                          $(\la/\mu)^{\b}$}.
         \end{cases}
\end{equation*}
For a skew diagram $X$, we define 
$\vD(X), \ol\vD'(X) \in \ZZ[u^{1/2}, u^{-1/2}]$ by the formula 
in 7.3, but for $\ol\vD'(X)$, we modify the definition of $\ol\vD(X)$
by replacing $ct(x)$ by $ct'(x)$.  
Concerning the irreducible characters of $\CH$, we have the following 
result.
\begin{theorem}[{[HR1, Theorem 4.21]}]  %%% Th. 7.6
Assume that $W$ is of type $D_n$.  Let $E_u^{\la}$ be the 
irreducible representation of $\CH$ associated to $\la \in \wt\CP_n$
such that $\la^{\a} \ne \la^{\b}$.
Then $\Tr(T_{w_{\Br}}, E_u^{\la})$ can be computed by the formula
in Theorem 7.4 for type $B_n$, by replacing $\ol\vD(X)$ by 
$\ol\vD'(X)$.
\end{theorem}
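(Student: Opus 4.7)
The plan is to reduce the type $D_n$ trace formula to the type $B_n$ formula of Theorem 7.4, by exploiting the embedding of $W$ of type $D_n$ as an index $2$ subgroup of the Weyl group of type $B_n$. The first task is to set up the corresponding embedding at the Hecke algebra level and to record how the ``sign-change'' elements $L'_k$ in type $D_n$ relate to the $L_k$ in type $B_n$. From the identity $s'_0 = s_1 s_0 s_1 s_0$ in $W$ of type $B_n$, one obtains an embedding of $\CH$ of type $D_n$ into $\CH$ of type $B_n$ sending $T'_0$ to an appropriate conjugate of $T_0$. Comparing the two recursive definitions term by term, I would establish a precise dictionary of the form $L'_k = (\text{twist of } L_k \text{ by } T_1, \dots, T_{k-1})$, which is the key algebraic input.

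Second, for $\la \in \wt\CP_n$ with $\la^{\a} \ne \la^{\b}$, Clifford theory ensures that the type $B_n$ representation $E_u^{\la}$ restricts irreducibly to the type $D_n$ Hecke algebra and coincides with the $E_u^{\la}$ of the theorem. This allows the computation
\begin{equation*}
\Tr(T_{w_{\Br}}, E_u^{\la}) = \Tr\bigl(\iota(T_{w_{\Br}}), E_u^{\la}\bigr),
\end{equation*}
where $\iota$ is the embedding into the type $B_n$ Hecke algebra. The two admissibility cases (i) and (ii) for $\Br$ correspond precisely to the two ways that a signed cycle type can lie in $W$ of type $D_n$: either with no negative cycles, or with exactly two negative cycles (normalized so that the shortest begins with $r_1 = -1$); the parity condition ``even $r_i$ are barred'' ensures that $w_{\Br}$ lies in the correct coset, so that $\iota(T_{w_{\Br}})$ is a well-defined expression in the type $B_n$ Hecke algebra.

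Third, with $\iota(T_{w_{\Br}})$ in hand I would apply Theorem 7.4 and track how replacing each occurrence of $L_k$ by $L'_k$ modifies the barred factor $\ol\vD(\mu^{(i)}/\mu^{(i-1)})$. The content $ct(x)$ appearing in $\ol\vD$ records, up to sign, an eigenvalue of a Jucys--Murphy-type element on the Halverson--Ram cellular basis; the algebraic difference between $L_k$ and $L'_k$ amounts to an extra multiplicative factor that shifts this eigenvalue by $u^{-1/2}$ per row of the skew diagram, producing $ct'(x) = \pm u^{j-i}$ in place of $ct(x) = \pm u^{j-i+1}$. The unbarred factors $\vD(\mu^{(i)}/\mu^{(i-1)})$ do not involve $L_k$ and are therefore unchanged. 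The main obstacle will be this last step: one must verify that the spectral shift is uniform across the skew diagram and respects the sign conventions for broken border strips, so that the resulting formula agrees with $\ol\vD'$ term by term. The parity restriction that even $r_i$ must be barred (and the specific form of cases (i) and (ii)) should be exactly what is needed to make the signs come out correctly; this compatibility is the most delicate point of the whole argument.
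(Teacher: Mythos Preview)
The paper does not prove this statement: Theorem 7.6 is quoted verbatim from Halverson--Ram [HR1, Theorem 4.21] and is used as a black box in the subsequent sections. There is therefore no ``paper's own proof'' to compare against; the result is simply imported.

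That said, your proposed strategy has a genuine gap at the very first step. There is no algebra embedding of the equal-parameter Hecke algebra $\CH$ of type $D_n$ into the equal-parameter Hecke algebra of type $B_n$ of the kind you describe. While it is true that $s_0' = s_1 s_0 s_1 s_0$ in the Weyl group, the corresponding Hecke element $T_1 T_0 T_1 T_0$ (or any conjugate of $T_0$) in $\CH(B_n)$ does not satisfy the quadratic relation $(T_0' - u)(T_0' + 1) = 0$ required of the generator $T_0'$ in $\CH(D_n)$. The standard way to realize $\CH(D_n)$ inside a type $B_n$ Hecke algebra is to use \emph{unequal} parameters, specializing the $T_0$-parameter to $1$; but then the type $B_n$ Murnaghan--Nakayama formula of Theorem 7.4 (which is stated for equal parameters) is not directly available, and the Clifford-theory restriction argument you invoke does not apply in the form you state. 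Halverson and Ram instead argue directly inside $\CH(D_n)$: they compute the action of the elements $L_k'$ on an explicit seminormal (Young) basis of $E_u^{\la}$, and the content function $ct'(x)$ arises as the eigenvalue of $L_k'$ on that basis, not via any comparison with the $L_k$ of type $B_n$. If you want to reconstruct a proof, you should follow that direct route rather than attempt an embedding.
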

%%%
\para{7.7.} Let  $\CF_c$ is the cuspidal family as in 4.3, 
where $\CF_c$ is a subset of $\F_n$ or $\F_n^+$.
Let $\CF^1_c$ (resp. $\CF_c^0$) be the set of symbols of 
defect 1 (resp. defect 0) contained 
in $\CF_c$.  First assume that $\CF_c \subset \F_n$.  
Let $\CP_n^d$ be the set of double partitions 
$\la$ such that $\vL_{E^{\la}} \in \CF_c$.  Then $\la \in \CP_n^d$
can be written as $\la = (\a, \b)$ with 
$\a : \a_1 \ge \a_2 \ge \dots \ge \a_d \ge \a_{d+1} \ge 0$, and 
$\b : \b_1 \ge \b_2 \ge \dots \ge \b_d \ge 0$.  
Let $\b^* : \b_1^* \ge \b_2^* \ge \cdots $ be the dual partition of $\b$.   
The following fact is easily checked.
\begin{equation*}
\tag{7.7.1}
\CF_c^1 \simeq \CP_n^d = \{ \la = (\a, \b) \in \CP_n \mid 
  \a_i + \b^*_{d - i +2} = d \text{ for } 1 \le i \le d+1\}.
\end{equation*}
\par 
Next assume that $\CF_c \subset \F_n^+$.  
Let $\wt\CP_n^d$ be the set of double partitions $\la \in \wt \CP_n$ 
such that $\vL_{E^{\la}} \in \CF_c$, where 
$\la = (\a, \b)$ with $\a: \a_1 \ge \cdots \ge \a_d \ge 0$, 
$\b : \b_1 \ge \cdots \ge \b_d \ge 0$
(in this case always $\a \ne \b$).
Let $\b^*$ be the dual partition of $\b$.  Then we have
\begin{equation*}
\tag{7.7.2}
\CF_c^0 \simeq \wt\CP_n^d = 
  \{ \la = (\a, \b) \in \wt\CP_n \mid 
                           \a_i + \b^*_{2d-i+1} = 2d 
        \text{ for $1 \le i\le d$}\}.
\end{equation*}
\par
(7.7.1) shows that $\la = (\a, \b) \in \CP_{n,2}^d$ is 
obtained from a diagram $\g = (d^{d+1})$ of rectangular 
shape as follows; take any partition $\a \subseteq \g$, 
and let $\b$ be the dual of the partition obtained by rearranging 
the skew diagram $\g/\a$. 
Similarly, (7.7.2) shows that $\la = (\a, \b) \in \wt\CP_n^d$ is
obtained from the diagram $\g = ((2d)^{2d})$ of rectangular shape 
by a similar process as above.
\par
For example, in the case of type $B_n$ with $d = 2$, 
we have $n = d(d+1) = 6$ and 
\begin{equation*}
\begin{split}
\CP_{6}^2 = \{ &(21;21), (2^2;1^2), (1^2;31), (2^3;-), (2^21;1), \\
                &(21^2;2), (1^3;3), (2;2^2), (1;32), (-;3^2)\}.
\end{split}
\end{equation*}
In the case of type $D_n$ with $d = 1$, we have $n = 4d^2 = 4$ and 
\begin{equation*}
\wt\CP_4^1 = \{ (2^2; -), (21^2; 1), (2, 1^2) \}.
\end{equation*}

\para{7.8.}
Let $w_{\Br}$ be an element in $W$ associated to some 
$\Br = (r_1, \dots, r_k)$ as in (7.3.1).
By Lemma 7.2 and by (6.5.3), we have
\begin{equation*}
\tag{7.8.1}
f_{\vL_c}(w_{\Br}) = \d 
       \sum_{(\a,\b)}(-1)^{|\a|}
                 \Tr(T_{w_{\Br}}, E_u^{(\a,\b)}), 
\end{equation*}
where  the sum is taken over all $\la = (\a, \b)$ in $\CP_n^d$
 (resp. in $\wt \CP_n^d$), and the constant $\d$ is given as  
$\d = (-1)^{d(d+1)/2}2^{-d}$ (resp. $(-1)^{d(2d-1)}2^{-2d+1}$)
if $G$ is of type $B_n$ (resp. $D_n$).
\par
We shall compute this sum for some specific choice of $\Br$ 
by applying the Murnaghan-Nakayama formula (Theorem 7.4 or 
Theorem 7.6).
In order to discuss the case $B_n$ and $D_n$ simultaneously, 
we consider the following setting.  Let 
$\CP_n^{a,b}$ be the set of double partitions 
$\la = (\a, \b) \in \CP_n$, where 
$\a: \a_1 \ge \cdots \ge \a_a \ge 0, \b: \b_1 \ge \cdots \ge \b_b \ge 0$ and
$n = ab$, such that   
\begin{equation*}
\tag{7.8.2}
\CP_n^{a,b} = \{ \la = (\a, \b) \in \CP_n 
   \mid \a_i + \b^*_{a-i+1} = b \text{ for $1 \le i \le a$}\}.
\end{equation*}
Hence $\CP_n^{a,b}$ is the set of $\la$ contained in the Young diagram 
$\g = (b^a)$ of rectangular shape in the above sense.  In particular, 
$\CP_n^{d,d+1}$ coincides with $\CP_n^d$, and $\CP_n^{2d,2d}$ (under the
identification $(\a,\b) = (\b,\a)$) coincides with $\wt \CP_n^d$.
\par
Put 
\begin{equation*}
\tag{7.8.3}
f_{a,b}(w_{\Br}) = 
  \sum_{(\a,\b) \in \CP_n^{a,b}}
     (-1)^{|\a|}\Tr(T_{w_{\Br}}, E_u^{(\a,\b)}).
\end{equation*}
We take $r_k = n, r_{k-1} = n - (2a + 2b -6)$ so that in applying 
Theorem 7.4 or Theorem 7.6, 
$\mu^{(k)}/\mu^{(k-1)}$ is a broken border strip 
of length $2a + 2b -6$.
Let $X$ be a broken border strip of length $2a +2b -6$ contained 
in $\la = (\a,\b) \in \CP^{a,b}_n$.
We can write $X = Y \coprod Z$ with $Y \subset \a$, 
$Z \subset \b$, broken border strips.  
Since the maximum length of a border strip is $a + b -1$,
we have only to consider the following 5 cases.  
\par\medskip
Case I. \ $|Y| = a + b - 1, |Z| = a + b - 5$, 
\par
Case II. \ $|Y| = a + b - 2, |Z| = a + b - 4$,
\par
Case III. \ $|Y| = a + b - 3, |Z| = a + b - 3$, 
\par
Case IV. $|Y| = a + b - 4, |Z| = a + b - 2$,
\par
Case V. $|Y| = a + b - 5, |Z| = a + b- 1$.
\par\medskip
We consider the diagram $\g = (b^a)$ of rectangular 
shape so that $\a \cup \b^* = \g$.  
In the following discussion, we regard $Y$ and $Z$ as paths
in $\g$, instead of considering $\a$ and $\b$ separately.
For example, the following figure explains an example 
of the case I, where $\la = (\a, \b) = (42^21^2, 4^221)$ 
with $n = 20$, 
$Y$ is a  border strip of length 8 and $Z$ is a border strip 
of length 4.  In the figure, $\bullet$ (resp. $\times$) denotes the 
starting point and the ending point of $Y$ (resp. $Z$).
\begin{figure}[h]
\begin{picture}(250,70)
\setlength{\unitlength}{0.8mm}
\thicklines
\put(0,0){\framebox(20, 25)}
\thinlines
\multiput(0,5)(0,5){4}{\line(1,0){20}}
\multiput(5,0)(5,0){3}{\line(0,1){25}}
\thicklines
\put(5,0){\line(0,1){10}}
\put(5,10){\line(1,0){5}}
\put(10,10){\line(0,1){10}}
\put(10,20){\line(1,0){10}}

\put(1.5,1.5){$\bullet$}
\put(16.5, 21.5){$\bullet$}
\put(5.5, 6.0){$\times$}
\put(10.5, 16.0){$\times$}

\put(0,-10){$\g = (4^5)$}

\thinlines
\put(30, 12){\vector(1,0){10}}

\put(50, 25){\line(0,-1){25}}
\put(55, 25){\line(0,-1){25}}
\put(60, 25){\line(0,-1){15}}
\put(65, 25){\line(0,-1){5}}
\put(70, 25){\line(0,-1){5}}

\put(50,0){\line(1,0){5}}
\put(50,5){\line(1,0){5}}
\put(50,10){\line(1,0){10}}
\put(50,15){\line(1,0){10}}
\put(50,20){\line(1,0){20}}
\put(50,25){\line(1,0){20}}

\put(51.5, 1.5){$\bullet$}
\put(66.5, 21.5){$\bullet$}

\put(46,-10){$\a = (42^21^2)$}

\put(90,10){\line(0,1){15}}
\put(95,10){\line(0,1){15}}
\put(100,10){\line(0,1){15}}
\put(105,15){\line(0,1){10}}
\put(110,15){\line(0,1){10}}

\put(90,10){\line(1,0){10}}
\put(90,15){\line(1,0){20}}
\put(90,20){\line(1,0){20}}
\put(90,25){\line(1,0){20}}

\put(95.5, 11){$\times$}
\put(105.5, 16){$\times$}

\put(91, -10){$\b = (4^22)$}
\end{picture}
\end{figure}
\vspace*{1cm}
\par
In computing $f_{a,b}(w_{\Br})$, we use the following 
cancellation property.
\begin{lemma} %%% Lemma 7.9
Let $x$ be the top rightmost node of $\g$, 
and $y$ the west of $x$, $z$ the south of $x$.  Assume 
that $x, y \in Y$ and that $z \in Z$. 
Then $\a : a_1 \ge \a_2 \ge \cdots, \b : \b_1\ge \b_2 \ge \cdots$
with $\a_1 = a, \b_1 = b-1$.
Let $\la' = (\a', \b') \in \CP_n$ be defined by
$a_1' = \a_1-1$, $\b_1' = \b_1+1$ and 
$\a'_j = \a_j, \b'_j = \b_j$  for $j \ne 1$.  Then 
$\la' \in \CP_n^{a,b}$.  Put $Y' = Y - \{ x\}$ and 
$Z' = Z + \{ x\}$. 
Then $Y'$ (resp. $Z'$) is a broken border strip of $\a'$
(resp. $\b'$).  Let $X' = Y' \coprod Z'$.  Then we have 
\begin{equation*}
\vD(X) = \vD(X').
\end{equation*}
Moreover, the double partition $(\a - Y, \b - Z)$ coincides with
$(\a' - Y', \b - Z')$.
Hence in the computation of $f_{a,b}(w_{\Br})$,  
the broken border strip $X$ of this type may be ignored.
Similar situations occur also for the cases, such as 
$x$ is the bottom leftmost node of $\g$ and $y$ is the
 north of $x$, $z$ is the east of $x$, and $x,y \in Y$, 
$z \in Z$. 
\end{lemma}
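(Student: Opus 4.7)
The plan is to establish Lemma~7.9 by showing that the operation $(\lambda, X) \mapsto (\lambda', X')$ produces a sign-reversing involution (with respect to the sign $(-1)^{|\alpha|}$ appearing in (7.8.3)) which preserves the Murnaghan--Nakayama contribution. Since the remaining chain $\emptyset = \mu^{(0)} \subseteq \cdots \subseteq \mu^{(k-1)}$ satisfies $\mu^{(k-1)} = \lambda - X = \lambda' - X'$ and is therefore identical for both pairs, pairing up such configurations will yield pairwise cancellation in the sum defining $f_{a,b}(w_{\Br})$.

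First I would verify the partition structure. From $x = (1, b) \in Y \subseteq \alpha$ and the rectangular constraint $\alpha_1 + \beta^*_a = b$ of (7.8.2), we obtain $\alpha_1 = b$ and $\beta^*_a = 0$; from $z = (2, b) \in Z \subseteq \beta$ (viewing $\gamma \setminus \alpha$ as $\beta^*$ via a $180^\circ$ rotation of the complement in $\gamma$), the cell $z$ corresponds to position $(1, a-1)$ in $\beta$, forcing $\beta_1 = a - 1$. Setting $\alpha'_1 = b - 1$ then forces ${\beta'}^*_a = 1$ by the constraint, so $\beta'$ gains exactly one cell $x' := (1, a)$, and one checks $\lambda' = (\alpha', \beta') \in \CP_n^{a,b}$.

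Next I would verify that $Y' = Y - \{x\}$ is a broken border strip of $\alpha'$ and $Z' = Z + \{x'\}$ is a broken border strip of $\beta'$. The cell $x$ is the unique cell of its connected component $Y_0 \subseteq Y$ in column $b$ of $\alpha$ (no cell of $Y_0$ lies below $x$, since $(2, b) \in \beta$), so $Y_0 - \{x\}$ is still a border strip. Similarly $x' = (1, a)$ attaches horizontally to $z = (1, a - 1) \in Z_0$; since $\beta_1 = a - 1 < a$, no cell of $\beta$ lies in column $a$, so no $2 \times 2$ block arises. The equality $(\alpha - Y, \beta - Z) = (\alpha' - Y', \beta' - Z')$ is then immediate, since on each factor one simply removes or adds the same cell to both the partition and the border strip.

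The key computation is the identity $\vD(X) = \vD(X')$. Since the number of connected components is unchanged, the factor $(u^{1/2} - u^{-1/2})^{m-1}$ in the definition of $\vD$ is the same. For the component-wise factors, removing $x$ from $Y_0$ gives $c(Y_0 - \{x\}) = c(Y_0) - 1$ and $r(Y_0 - \{x\}) = r(Y_0)$ (since $y \in Y_0$ retains row $1$), contributing an overall factor $u^{-1/2}$; adjoining $x'$ to $Z_0$ gives $c(Z_0 + \{x'\}) = c(Z_0) + 1$ (column $a$ of $\beta'$ is new) and $r(Z_0 + \{x'\}) = r(Z_0)$, contributing $u^{1/2}$. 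These cancel, yielding $\vD(X) = \vD(X')$. Combined with $(-1)^{|\alpha'|} = -(-1)^{|\alpha|}$, the two contributions to $f_{a,b}(w_{\Br})$ cancel exactly. The symmetric case where $x$ is the bottom-leftmost node of $\gamma$ is handled analogously by exchanging the roles of top/bottom and $\alpha/\beta$.

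The main obstacle will be the careful bookkeeping tracking positions of $x, y, z$ under the correspondence between cells of $\gamma$, $\alpha$, and $\beta$ (via the rotation of the complement), together with verifying in each configuration that the border-strip property is preserved by the move. Once these geometric details are in hand, the identity $\vD(X) = \vD(X')$ is a short formal computation from the definitions.
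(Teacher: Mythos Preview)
Your proposal is correct and follows essentially the same approach as the paper: both arguments observe that the number of connected components of $X$ is unchanged, that $c(Y') = c(Y)-1$, $c(Z') = c(Z)+1$ while the row counts are preserved (giving $\vD(X)=\vD(X')$), and that $(\alpha-Y,\beta-Z)=(\alpha'-Y',\beta'-Z')$ together with $|\alpha'|=|\alpha|-1$ forces cancellation in $f_{a,b}(w_{\Br})$. Your version simply spells out the cell-by-cell bookkeeping (including the $180^\circ$-rotation correspondence between $\gamma\setminus\alpha$ and $\beta$) that the paper leaves implicit; note also that the values $\alpha_1=b$, $\beta_1=a-1$ you derive are the geometrically correct ones, and the $\alpha_1=a$, $\beta_1=b-1$ in the stated lemma appears to be a typographical slip.
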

\begin{proof}
Since $y \in Y$ and $z \in Z$, the number of border strips
in $X$ is the same as the numbers in $X'$.  Since 
$c(Y') = c(Y) - 1, c(Z') = c(Z) + 1$, and 
$r(Y') = r(Y), r(Z') = r(Z)$, we see that $\vD(X) = \vD(X')$.
Clearly, we have $(\a - Y, \b - Z) = (\a' - Y', \b' - Z')$.
It follows that in the sequence 
$\emptyset \subset \mu^{(1)} 
   \subset \cdots \subset \mu^{(k)} = \la$ in 
Theorem 7.4 or Theorem 7.6, 
$\emptyset \subset \cdots \subset \mu^{(k-1)}$ is common 
for $\la'$ if $\mu^{(k)}/\mu^{(k-1)} = X$.  Since 
$|\a'| = |\a| -1 $, two terms starting from $X$ and from $X'$
are canceled in the computation of $f_{a,b}(w_{\Br})$ by using
these theorems.
\end{proof}
\para{7.10.}
We shall classify the broken border strip $X = Y \coprod Z$
which is needed for the computation of $f_{a,b}(w_{\Br})$.
Let $x_t$ be the top rightmost node and $x_b$ be 
the bottom leftmost node of the Young diagram $\a$.  
Let $y_t$ be the top rightmost node of $\b$ and $y_b$ be
the bottom leftmost node of $\b$.  Note that we embed 
$\b$ in $\g$ by using $\b^*$, and regard $y_t, y_b$ as 
a box in $\g$. 
\par
First we consider the Case I.
We have $\a_1 = b, \a_a \ne 0$,
and $x_t$ is the top rightmost node and $x_b$ is the
bottom leftmost node of $\g$. $Y$ is a unique
border strip connecting $x_t$ and $x_b$.
Assume that $\a_2 < b$ and $\a_a = 1$.  
Then $y_t$ is the south of $x_t$.
If $Z$ contains $y_t$ or $y_b$, then Lemma 7.9 can be applied, and 
we can ignore this $X$.  Hence $Z$ does not contain $y_t, y_b$.
In this case, $Z$ is a unique border strip connecting the node
west of $y_t$ and the north of $y_b$. 
Next assume that $\a_2 = b$ and $\a_a = 1$. If $y_b \in Z$, 
then Lemma 7.9 can be applied.  So we may assume that $y_b \notin Z$. 
In this case $Z$ is a unique border strip connecting the node north of
$y_b$ and $y_t$ which is the two node south of $x_t$. 
Next assume that $\a_2  < b$ and $\a_a = 2$.  Then $y_t$ is the south 
of $x_t$, and if $y_t \in Z$, the lemma can be applied.  So we may
assume that $y_t \notin Z$,  and $Z$ is a unique border strip 
connecting the node west of $y_t$ and $y_b$.
Finally, assume that $\a_2 = b$ and $\a_a = 2$.  In this case, 
$Z$ is a unique border strip connecting $y_t$ and $y_b$.
\par
Thus Case I is divided into 4 classes, and in each case, $Y$ and $Z$
are determined uniquely by $\la = (\a,\b)$.
In a similar way, one can classify all the possible broken border 
strips $X$ for Case II and Case III.  
Case IV is symmetric to Case II, 
and each class is obtained from the class in Case II, 
by rotating the diagram $\g$ by the angle $180^{\circ}$, 
and then replacing $Y$ and $Z$.
Similarly, Case V is obtained from Case I.
We shall list up all the possible cases for the cases I, II, III, 
in the list below, assuming 
that $a, b \ge 4$.  Case I is divided into 4 classes, 
Case II into 6 classes, and Case III into
12 classes, (Case IV : 6 classes, Case V: 4 classes).       
Here $\bullet$ (resp. $\times$) denotes the starting node and the ending 
node of the border strip $Y$ (resp. $Z$).
In each case, $Y$ and $Z$ are determined uniquely 
by $\la = (\a, \b)$.  Or alternately, if we draw in the diagram $\g$ 
the path connecting nodes marked by $\bullet$, 
so that it is compatible with the path connecting
boxes marked by $\times$, then it determines $\la = (\a, \b)$ uniquely.
\par
Note that in each case, $\vD(X)$ does not depend on $\la$ belonging 
to the class, and has a common value.  We have listed those $\vD(X)$
for each class, where $U = (u^{1/2} - u^{-1/2})$.  
For Case IV or Case V, $\vD(X)$ is obtained from the corresponding 
$\vD(X)$ for Case II or Case I, by replacing $u^{1/2} \lra -u^{-1/2}$.
\par
\vspace{1.5cm}
{\bf Case I.}  \ $|Y| = a+b-1, |Z| = a+b-5$.
\begin{figure}[h]
\begin{picture}(320,70)
\setlength{\unitlength}{0.8mm}

%%% (1)

\put(0,0){\framebox(25, 30)}
\put(0,5){\line(1,0){10}}
\put(5,0){\line(0,1){10}}
\put(10,5){\line(0,1){5}}
\put(5,10){\line(1,0){5}}

\put(15, 25){\line(1,0){10}}
\put(15, 20){\line(1,0){5}}
\put(15, 20){\line(0,1){5}}
\put(20, 20){\line(0,1){10}}

\put(1.5, 1.5){$\bullet$}
\put(6.0, 6.0){$\times$}
\put(16.0, 21.0){$\times$}
\put(21.5, 26.5){$\bullet$}

\put(3, -10){(1) \ \ $-U$}

%%% (2)
\put(40,0){\framebox(25, 30)}
\put(40,5){\line(1,0){10}}
\put(45,0){\line(0,1){10}}
\put(50,5){\line(0,1){5}}
\put(45,10){\line(1,0){5}}

\put(60, 25){\line(1,0){5}}
\put(60, 20){\line(1,0){5}}
\put(60, 15){\line(1,0){5}}
\put(60, 15){\line(0,1){15}}

\put(41.5, 1.5){$\bullet$}
\put(46.0, 6.0){$\times$}
\put(61.0, 16.0){$\times$}
\put(61.5, 26.5){$\bullet$}

\put(41, -10){(2) \ \ $u\iv U$}

%%% (3)

\put(80,0){\framebox(25, 30)}
\put(80,5){\line(1,0){15}}
\put(85,0){\line(0,1){5}}
\put(90,0){\line(0,1){5}}
\put(95,0){\line(0,1){5}}

\put(95, 25){\line(1,0){10}}
\put(95, 20){\line(1,0){5}}
\put(100,20){\line(0,1){10}}
\put(95, 20){\line(0,1){5}}

\put(81.5, 1.5){$\bullet$}
\put(91.0, 1.0){$\times$}
\put(96.0, 21.0){$\times$}
\put(101.5, 26.5){$\bullet$}

\put(83, -10){(3) \ \ $u U$}
%%% (4)

\put(120,0){\framebox(25, 30)}
\put(120,5){\line(1,0){15}}
\put(125,0){\line(0,1){5}}
\put(130,0){\line(0,1){5}}
\put(135,0){\line(0,1){5}}

\put(140, 25){\line(1,0){5}}
\put(140, 20){\line(1,0){5}}
\put(140, 15){\line(1,0){5}}
\put(140, 15){\line(0,1){15}}

\put(121.5, 1.5){$\bullet$}
\put(131.0, 1.0){$\times$}
\put(141.5, 16.5){$\times$}
\put(141.5, 26.5){$\bullet$}

\put(123, -10){(4) \ \ $-U$}
\end{picture}
\end{figure}
\vspace*{1.5cm}
\par
{\bf Case II.} \ $|Y| = a+b-2, |Z| = a+b-4$.
\begin{figure}[h]
\begin{picture}(320,70)
\setlength{\unitlength}{0.8mm}

%%% (1)

\put(0,0){\framebox(25, 30)}
\put(0,5){\line(1,0){10}}
\put(5,0){\line(0,1){10}}
\put(10,0){\line(0,1){10}}
\put(5,10){\line(1,0){5}}

\put(15, 25){\line(1,0){10}}
\put(15, 20){\line(1,0){5}}
\put(15, 20){\line(0,1){5}}
\put(20, 20){\line(0,1){10}}

\put(1.5, 1.5){$\bullet$}
\put(16.0, 21.0){$\times$}
\put(21.5, 26.5){$\bullet$}
\put(6.0, 1.0){$\times$}
\put(6.5, 6.5){$\bullet$}

\put(0, -10){(1) \ \ $u^{1/2}U^2$}

%%% (2)
\put(40,0){\framebox(25, 30)}
\put(40,5){\line(1,0){10}}
\put(45,0){\line(0,1){10}}
\put(50,5){\line(0,1){5}}
\put(45,10){\line(1,0){5}}

\put(55, 25){\line(1,0){10}}
\put(55, 20){\line(1,0){10}}
\put(55, 20){\line(0,1){5}}
\put(60, 20){\line(0,1){10}}

\put(41.5, 1.5){$\bullet$}
\put(46.0, 6.0){$\times$}
\put(61.0, 21.0){$\times$}
\put(56.5, 21.5){$\bullet$}
\put(61.5, 26.5){$\bullet$}

\put(38, -10){(2) \ \ $-u^{1/2}U^2$}

%%% (3)

\put(80,0){\framebox(25, 30)}
\put(80,5){\line(1,0){10}}
\put(85,0){\line(0,1){10}}
\put(90,5){\line(0,1){5}}
\put(80,10){\line(1,0){10}}

\put(95, 25){\line(1,0){10}}
\put(95, 20){\line(1,0){5}}
\put(100,20){\line(0,1){10}}
\put(95, 20){\line(0,1){5}}

\put(81.0, 1.0){$\times$}
\put(86.0, 6.0){$\times$}
\put(81.5, 6.5){$\bullet$}
\put(96.0, 21.0){$\times$}
\put(101.5, 26.5){$\bullet$}

\put(80, -10){(3) \ \ $u^{1/2}U^2$}
%%% (4)

\put(120,0){\framebox(25, 30)}
\put(120,5){\line(1,0){10}}
\put(125,0){\line(0,1){10}}
\put(130,5){\line(0,1){5}}
\put(120,10){\line(1,0){10}}

\put(140, 25){\line(1,0){5}}
\put(140, 20){\line(1,0){5}}
\put(140, 15){\line(1,0){5}}
\put(140, 15){\line(0,1){15}}

\put(121.0, 1.0){$\times$}
\put(126.0, 6.0){$\times$}
\put(121.5, 6.5){$\bullet$}

\put(141.5, 16.5){$\times$}
\put(141.5, 26.5){$\bullet$}

\put(119, -10){(4) \ \ $-u^{-1/2}U^2$}

\end{picture}
\end{figure}
\par
\vspace*{0.8cm}
\begin{figure}[h]
\begin{picture}(320,80)
\setlength{\unitlength}{0.8mm}

%%% (5)

\put(0,0){\framebox(25, 30)}
\put(0,5){\line(1,0){10}}
\put(5,0){\line(0,1){10}}
\put(10,5){\line(0,1){5}}
\put(5,10){\line(1,0){5}}

\put(15, 25){\line(1,0){10}}
\put(15, 20){\line(1,0){5}}
\put(15, 20){\line(0,1){10}}
\put(20, 20){\line(0,1){10}}

\put(1.5, 1.5){$\bullet$}
\put(16.0, 21.0){$\times$}
\put(16.5, 26.5){$\bullet$}
\put(21.0, 26.0){$\times$}
\put(6.0, 6.0){$\times$}

\put(0, -10){(5) \ \ $-u^{-1/2}U^2$}

%%% (6)
\put(40,0){\framebox(25, 30)}
\put(40,5){\line(1,0){15}}
\put(45,0){\line(0,1){5}}
\put(50,0){\line(0,1){5}}
\put(55,0){\line(0,1){5}}

\put(55, 25){\line(1,0){10}}
\put(55, 20){\line(1,0){5}}
\put(55, 20){\line(0,1){10}}
\put(60, 20){\line(0,1){10}}

\put(41.5, 1.5){$\bullet$}
\put(51.0, 1.0){$\times$}
\put(56.5, 26.5){$\bullet$}
\put(56.0, 21.0){$\times$}
\put(61.0, 26.0){$\times$}

\put(38, -10){(6) \ \ $u^{1/2}U^2$}

\end{picture}
\end{figure}
\par
\vspace*{1.5cm}
{\bf Case III.} \ $|Y| = a+b-3, |Z| = a+b-3$.
\par
\begin{figure}[h]
\begin{picture}(320,70)
\setlength{\unitlength}{0.8mm}

%%% (1)

\put(0,0){\framebox(25, 30)}
\put(0,5){\line(1,0){10}}
\put(5,0){\line(0,1){10}}
\put(10,0){\line(0,1){10}}
\put(5,10){\line(1,0){5}}

\put(15, 25){\line(1,0){10}}
\put(15, 20){\line(1,0){10}}
\put(15, 20){\line(0,1){5}}
\put(20, 20){\line(0,1){10}}

\put(1.5, 1.5){$\bullet$}
\put(6.0, 1.0){$\times$}
\put(6.5, 6.5){$\bullet$}
\put(21.5, 21.5){$\times$}
\put(21.5, 26.5){$\bullet$}
\put(16.5, 21.5){$\bullet$}

\put(3, -10){(1) \ \ $U^3$}

%%% (2)
\put(40,0){\framebox(25, 30)}
\put(40,5){\line(1,0){10}}
\put(45,0){\line(0,1){10}}
\put(50,5){\line(0,1){5}}
\put(45,10){\line(1,0){5}}

\put(55, 25){\line(1,0){10}}
\put(55, 20){\line(1,0){5}}
\put(55, 20){\line(0,1){5}}
\put(60, 20){\line(0,1){10}}

\put(41.5, 1.5){$\bullet$}
\put(46.0, 6.0){$\times$}
\put(61.0, 26.0){$\times$}
\put(56.5, 21.5){$\bullet$}

\put(41, -10){(2) \ \ $-U$}

%%% (3)

\put(80,0){\framebox(25, 30)}
\put(80,5){\line(1,0){10}}
\put(85,0){\line(0,1){10}}
\put(90,0){\line(0,1){10}}
\put(85,10){\line(1,0){5}}

\put(95, 25){\line(1,0){10}}
\put(95, 20){\line(1,0){5}}
\put(100,20){\line(0,1){10}}
\put(95, 20){\line(0,1){10}}

\put(81.5, 1.5){$\bullet$}
\put(86.5, 6.5){$\bullet$}
\put(86.0, 1.0){$\times$}
\put(96.0, 21.0){$\times$}
\put(101.0, 26.0){$\times$}
\put(96.5, 26.5){$\bullet$}

\put(83, -10){(3) \ \ $U^3$}

%%% (4)

\put(120,0){\framebox(25, 30)}
\put(120,5){\line(1,0){15}}
\put(125,0){\line(0,1){5}}
\put(130,0){\line(0,1){5}}
\put(135,0){\line(0,1){5}}

\put(135, 25){\line(1,0){10}}
\put(135, 20){\line(1,0){5}}
\put(135, 20){\line(0,1){5}}
\put(140, 20){\line(0,1){10}}

\put(121.5, 1.5){$\bullet$}
\put(131.0, 1.0){$\times$}
\put(136.5, 21.5){$\bullet$}
\put(141.0, 26.0){$\times$}

\put(123, -10){(4) \ \ $uU$}

\end{picture}
\end{figure}
\par
\vspace*{1.5cm}
\begin{figure}[h]
\begin{picture}(320,70)
\setlength{\unitlength}{0.8mm}

%%% (5)

\put(0,0){\framebox(25, 30)}
\put(0,5){\line(1,0){10}}
\put(5,0){\line(0,1){10}}
\put(10,5){\line(0,1){5}}
\put(0,10){\line(1,0){10}}

\put(15, 25){\line(1,0){10}}
\put(15, 20){\line(1,0){5}}
\put(15, 20){\line(0,1){10}}
\put(20, 20){\line(0,1){10}}

\put(1.0, 1.0){$\times$}
\put(1.5, 6.5){$\bullet$}
\put(6.0, 6.0){$\times$}
\put(16.5, 26.5){$\bullet$}
\put(21.0, 26.0){$\times$}
\put(16.0, 21.0){$\times$}

\put(3, -10){(5) \ \ $U^3$}

%%% (6)
\put(40,0){\framebox(25, 30)}
\put(40,5){\line(1,0){10}}
\put(45,0){\line(0,1){10}}
\put(50,5){\line(0,1){5}}
\put(45,10){\line(1,0){5}}

\put(50, 25){\line(1,0){15}}
\put(50, 25){\line(0,1){5}}
\put(55, 25){\line(0,1){5}}
\put(60, 25){\line(0,1){5}}

\put(41.5, 1.5){$\bullet$}
\put(46.0, 6.0){$\times$}
\put(61.0, 26.0){$\times$}
\put(51.5, 26.5){$\bullet$}

\put(41, -10){(6) \ \ $u\iv U$}

%%% (7)

\put(80,0){\framebox(25, 30)}
\put(80,5){\line(1,0){15}}
\put(85,0){\line(0,1){5}}
\put(90,0){\line(0,1){5}}
\put(95,0){\line(0,1){5}}

\put(90, 25){\line(1,0){15}}
\put(90, 25){\line(0,1){5}}
\put(95, 25){\line(0,1){5}}
\put(100, 25){\line(0,1){5}}

\put(81.5, 1.5){$\bullet$}
\put(91.0, 1.0){$\times$}

\put(101.0, 26.0){$\times$}
\put(91.5, 26.5){$\bullet$}

\put(83, -10){(7) \ \ $-U$}

%%% (8)

\put(120,0){\framebox(25, 30)}
\put(120,5){\line(1,0){10}}
\put(120,10){\line(1,0){10}}
\put(125,0){\line(0,1){10}}
\put(130,5){\line(0,1){5}}

\put(135, 25){\line(1,0){10}}
\put(135, 20){\line(1,0){10}}
\put(135, 20){\line(0,1){5}}
\put(140, 20){\line(0,1){10}}

\put(121.0, 1.0){$\times$}
\put(126.0, 6.0){$\times$}
\put(121.5, 6.5){$\bullet$}

\put(136.5, 21.5){$\bullet$}
\put(141.5, 26.5){$\bullet$}
\put(141.0, 21.0){$\times$}

\put(123, -10){(8) \ \ $U^3$}

\end{picture}
\end{figure}
\par
\newpage
\vspace*{0.5cm}
\begin{figure} [h]
\begin{picture}(320,60)
\setlength{\unitlength}{0.8mm}

%%% (9)
\put(0,0){\framebox(25, 30)}
\put(0,5){\line(1,0){10}}
\put(5,0){\line(0,1){10}}
\put(10,5){\line(0,1){5}}
\put(5,10){\line(1,0){5}}

\put(15, 25){\line(1,0){10}}
\put(15, 20){\line(1,0){5}}
\put(15, 20){\line(0,1){5}}
\put(20, 20){\line(0,1){10}}

\put(1.0, 1.0){$\times$}
\put(6.5, 6.5){$\bullet$}

\put(21.5, 26.5){$\bullet$}
\put(16.0, 21.0){$\times$}

\put(3, -10){(9) \ \ $-U$}

%%% (10)
\put(40,0){\framebox(25, 30)}
\put(40,5){\line(1,0){10}}
\put(45,0){\line(0,1){10}}
\put(50,5){\line(0,1){5}}
\put(45,10){\line(1,0){5}}

\put(60, 25){\line(1,0){5}}
\put(60, 20){\line(1,0){5}}
\put(60, 15){\line(1,0){5}}
\put(60, 15){\line(0,1){15}}

\put(41.0, 1.0){$\times$}
\put(46.5, 6.5){$\bullet$}

\put(61.5, 26.5){$\bullet$}
\put(61.0, 16.0){$\times$}

\put(41, -10){(10) \ \ $u\iv U$}

%%% (11)

\put(80,0){\framebox(25, 30)}
\put(80,5){\line(1,0){5}}
\put(80,10){\line(1,0){5}}
\put(80,15){\line(1,0){5}}
\put(85,0){\line(0,1){15}}

\put(95, 25){\line(1,0){10}}
\put(95, 20){\line(1,0){5}}
\put(95, 20){\line(0,1){5}}
\put(100, 20){\line(0,1){10}}

\put(81.0, 1.0){$\times$}
\put(81.5, 11.5){$\bullet$}

\put(101.5, 26.5){$\bullet$}
\put(96.0, 21.0){$\times$}

\put(83, -10){(11) \ \ $uU$}

%%% (12)

\put(120,0){\framebox(25, 30)}
\put(120,5){\line(1,0){5}}
\put(120,10){\line(1,0){5}}
\put(120,15){\line(1,0){5}}
\put(125,0){\line(0,1){15}}

\put(140, 25){\line(1,0){5}}
\put(140, 20){\line(1,0){5}}
\put(140, 15){\line(1,0){5}}
\put(140, 15){\line(0,1){15}}

\put(121.0, 1.0){$\times$}
\put(121.5, 11.5){$\bullet$}

\put(141.5, 26.5){$\bullet$}
\put(141.0, 16.0){$\times$}

\put(123, -10){(12) \ \ $-U$}
\end{picture}
\end{figure}
\vspace*{1.5cm}

\para{7.11.}
In each case listed in 7.10, one obtains a unique $\la' \in \CP_{n'}$
($n' = n- (2a + 2b -6)$) from $\la$ by removing the broken border strip
$X = Y\coprod Z$, where $\la' = (\a', \b')$ with 
$\a' = \a - Y, \b' = \b - Z$.
Let $J$ be the set of classes in the list in 7.10, and let 
$\CQ_j$ be the set of $\la' \in \CP_{n'}$ satisfying the condition 
in each case $j$. 
For example we consider the case $j = $ I-(1).  Then 
$\CQ = \CQ_{I-(1)}$ is the set of $\la' = (\a', \b')$ satisfying the 
following conditions.
\begin{equation*}
\begin{cases}
\a' = (\a'_1, \a'_2, \dots, \a'_{a-3}), \\
{\b'}^* = ({\b'}^*_1, {\b'}_2^*, \dots, {\b'}^*_{a-1}) \quad\text{ with }\quad
              {\b'}^*_1 = b-1, {\b'}^*_{a-1} = 1, \\
\a_i + {\b'}^*_{a-1-i} = b-2 \text{ for } i = 1, \dots, a-3. 
\end{cases}
\end{equation*}
For each $\la' = (\a', \b') \in \CQ$, 
we consider a broken border strip $X'$ 
of length $2a + 2b -10$.  Then $X'$ is unique, and is given as 
$X' = Y'\coprod Z'$, where $Y'$ is a unique border strip in $\a'$
of length $a + b -7$, and $Z'$ is a unique border strip in $\b'$
of length $a + b -3$.  By removing $X'$ from $\la'$, one obtains 
$\la'' \in \CP_{n''}$, where $n'' = n' - (2a + 2b -10) = (a-4)(b-4)$. 
Then it is easy to check that
\begin{equation*}
\tag{7.11.1}
\{ \la'' \in \CP_{n''} \mid \la' \in \CQ \} = \CP_{n''}^{a-4, b-4},
\end{equation*}
and one recovers the original set $\CP_{n}^{a-4, b-4}$ by replacing 
$a,b$ by $a-4, b-4$.
We can compute that $\vD(X') = -(u^{1/2} - u^{-1/2})$, which is 
independent from $\la' \in \CQ$.
For example, let  $a = 7, b = 6$ with $n = 42$.  Take 
$\la = (6432^21^2, 6^2542) \in \CP_{42}^{7,6}$.
Then $\la' = (\a', \b') = (321^2, 6431^2)$, and under an appropriate
rearrangement, $\g' = \a' \cup {\b'}^*$ can be drawn as in the 
following figure. Here $Y'$ (resp. $Z'$) is a unique border strip of 
length 6, (resp. length 10), and $\bullet$ (resp. $\times$ ) denotes
the starting point and the ending point of $Y'$ (resp. $Z'$).  
From this, we obtain $\g'' = (2^3)$, and 
$\la'' = (\a'', \b'') = (1, 32) \in \CP_6^{3,2}$.

\begin{figure}[h]
\begin{picture}(300,70)
\setlength{\unitlength}{0.8mm}

\thicklines
\put(0,0){\line(1,0){25}}
\put(0,0){\line(0,1){5}}
\put(0,5){\line(1,0){5}}
\put(5,5){\line(0,1){20}}
\put(5,25){\line(1,0){15}}
\put(20,25){\line(0,1){5}}
\put(20,30){\line(1,0){5}}
\put(25,30){\line(0,-1){30}}

\thinlines
\multiput(5,5)(0,5){5}{\line(1,0){20}}
\multiput(5,0)(5,0){4}{\line(0,1){25}}

\thicklines
\put(5,5){\line(1,0){5}}
\put(10,5){\line(0,1){10}}
\put(10,15){\line(1,0){5}}
\put(15,15){\line(0,1){5}}
\put(15,20){\line(1,0){5}}
\put(20,20){\line(0,1){5}}

\put(1.0,1.0){$\times$}
\put(6.5, 6.5){$\bullet$}
\put(16.5, 21.0){$\bullet$}
\put(21.0, 26.0){$\times$}

\put(0, -10){$\g' = \a' \cup {\b'}^*$}

\thinlines
\put(35, 12){\vector(1,0){10}}

\thicklines
\put(55, 5){\framebox(10,15)}
\thinlines
\put(60,5){\line(0,1){15}}
\put(55,10){\line(1,0){10}}
\put(55,15){\line(1,0){10}}

\thicklines
\put(55,15){\line(1,0){5}}
\put(60,15){\line(0,1){5}}

\put(47, -10){$\g'' = \a'' \cup {\b''}^*$}

\thinlines
\put(75, 12){\vector(1,0){10}}

\thinlines
\put(95, 15){\framebox(5,5)}

\put(115, 10){\line(1,0){10}}
\put(115, 15){\line(1,0){15}}
\put(115, 20){\line(1,0){15}}
\put(115, 10){\line(0,1){10}}
\put(120, 10){\line(0,1){10}}
\put(125, 10){\line(0,1){10}}
\put(130, 15){\line(0,1){5}}

\put(87,-10){$\a'' = (1)$}
\put(115, -10){$\b'' = (32)$}
\end{picture}
\end{figure}

\par
\vspace*{1.0cm}
In fact, a similar kind of arguments work for all other cases, 
and the set $Q_j$ is described in a similar way.  In particular, 
$\g' = \a' \cup {\b'}^*$ is of the shape obtained from a rectangle 
by attaching two nodes, one on the above or right of the northeast 
corner, and the other on the below or left of the southwest corner of 
the rectangle.  
In all the cases, the broken border strip $X' = Y' \coprod Z'$ of 
length $2a + 2b -10$ is determined uniquely, 
and we always find
the set $\CP_{n''}^{a-4, b-4}$ after removing the border strips $X'$.
Moreover, $\vD(X')$ takes the common value $ -(u^{1/2} - u^{-1/2})$ 
for all the cases through Case I $\sim$ Case V.  
\par
Recall that $w_{\Br} = (r_1, \dots, r_k)$ with $r_k = n$.
Assume that 
\begin{equation*}
r_k - r_{k-1} = 2a + 2b - 6, \quad r_{k-1}-r_{k-2} = 2a + 2b - 10.
\end{equation*}
Thus $r_{k-2} = n''$ with $n'' = (a - 4)(b - 4)$.
We put $\Br'' = (r_1, \dots, r_{k-2})$ and consider
$w_{\Br''} \in W_{n''}$.
By investigating the above list, we have the following lemma.
\begin{lemma} %%% Lemma 7.12 
Under the notation above, there exists a non-zero (Laurent) 
polynomial $h(u)$ such that 
\begin{equation*}
f_{a,b}(w_{\Br}) = h(u)f_{a-4,b-4}(w_{\Br''}).
\end{equation*}
\end{lemma}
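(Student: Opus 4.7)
The strategy is to apply the Murnaghan--Nakayama formula of Theorem~7.4 (or Theorem~7.6 in type $D_n$) twice in succession, stripping off broken border strips of lengths $|r_k|-|r_{k-1}|=2a+2b-6$ and $|r_{k-1}|-|r_{k-2}|=2a+2b-10$ corresponding to the last two blocks of $w_{\Br}$, and then comparing the resulting sum with $f_{a-4,b-4}(w_{\Br''})$.  For the first stripping, one enumerates all broken border strips $X = Y \coprod Z$ of length $2a+2b-6$ removable from some $\la = (\a,\b) \in \CP_n^{a,b}$.  By the cancellation Lemma~7.9, only the configurations listed in 7.10 (Cases~I--V with their subclasses) survive.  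In each surviving class $j$, the pair $(Y_j, Z_j)$ uniquely determines $\la$, the lengths $|Y_j|, |Z_j|$ are constant throughout the class, and $\vD(X_j)$ is the tabulated Laurent polynomial in $u^{1/2}$.

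For the second stripping, one inspects the shape $\g' = \a' \cup (\b')^*$ of each $\la' = \la \setminus X_j$: in every class this is a rectangle $(b-2)^{a-2}$ decorated with two extra nodes near the northeast and southwest corners, so that a unique broken border strip $X' = Y' \coprod Z'$ of length $2a+2b-10$ can be removed.  Its weight takes the uniform value $\vD(X') = -(u^{1/2}-u^{-1/2})$, independent of the class, and the sample computation after (7.11.1), together with its analogues for the remaining classes, shows that the induced map $\la \mapsto \la''$ restricts to a bijection from each class onto $\CP_{n''}^{a-4,b-4}$.

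Combining these two steps and writing $|\a| = |\a''|+|Y_j|+|Y'_j|$ yields
\[
f_{a,b}(w_{\Br}) \;=\; u^{(l'(w_{\Br})-l'(w_{\Br''}))/2}\, h(u)\, f_{a-4,b-4}(w_{\Br''}),
\qquad
h(u) \;=\; -(u^{1/2}-u^{-1/2})\sum_{j}(-1)^{|Y_j|+|Y'_j|}\vD(X_j),
\]
with $j$ ranging over all the classes of 7.10 (including their mirrors in Cases~IV, V).  The main obstacle of the proof will be to verify that $h(u) \ne 0$: this is a direct tabulation using the explicit values of $\vD(X_j)$ in Cases~I--V together with the parities of $|Y_j|+|Y'_j|$ in each class.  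Grouping the contributions by Laurent degree in $u^{1/2}$, one checks that at least one extremal monomial has a non-cancelling coefficient, which produces the required non-zero Laurent polynomial $h(u)$ and completes the inductive step.
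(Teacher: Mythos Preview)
Your proposal is correct and follows essentially the same approach as the paper: strip two broken border strips via the Murnaghan--Nakayama formula, use the classification in 7.10 (after the cancellation Lemma~7.9) together with the observation that the second strip always contributes $-(u^{1/2}-u^{-1/2})$ and yields a bijection with $\CP_{n''}^{a-4,b-4}$, and then verify the remaining sum is non-zero by tabulation. The paper's proof is slightly more explicit at the final step than yours: rather than appealing to ``at least one extremal monomial'', it isolates the coefficient of $u^{3/2}$ in $\sum_j \ve_j \vD_j(u)$, lists precisely which classes contribute (all with coefficient $1$), and notes that $\ve_j$ is constant within each of Cases I--V with one common value on I, III, V and the opposite on II, IV, making the total contribution visibly non-zero.
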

\begin{proof}
For each $j \in J$, we denote by $\vD_j(u)$ the Laurent polynomial
$\vD(X)$ attached to the broken border strip $X$ for $Q_j$ as in the 
list.  We put $\ve_j = (-1)^{|\a|}$ for 
$\la = (\a, \b) \in Q_j$.  (Note that $\ve_j$ is independent of 
the choice of $\la \in \CQ_j$). By (7.11.1), the cardinality of
$Q_j$ coincides with the cardinality of $\CP_{n''}^{a-4,b-4}$, 
hence is independent of $j \in J$.  Then the investigation in 7.11 
shows that 
\begin{equation*}
\tag{7.12.1}
f_{a,b}(w_{\Br}) = \biggl\{(u^{1/2} - u^{-1/2})|\CP_{n''}^{a-4,b-4}|
                     \sum_{j \in J}
                       \ve_j\D_j(u)\biggr\}f_{a-4,b-4}(w_{\Br''}).
\end{equation*}
Thus in order to show the lemma, it is enough to see that 
$\sum_j\ve_j\vD_j(u) \ne 0$.
For this we compare the highest degree term $u^{3/2}$ in $\vD_j$.
It follows from the list in 7.10, $\vD_j$ contains the term $u^{2/3}$
in the following cases, where the coefficients are always 1.
\par\medskip
Case I. \quad (3),
\par
Case II. \ \,(1), (3), (6),
\par
Case III. \  (1), (3), (4), (5), (8), (11),
\par
Case IV. \ (2), (4), (5),
\par
Case V. \hspace{2.5mm}(2),
\par\medskip\noindent
where the numbering in Case IV and V is given by 
the bijective correspondence with Case II and Case I
through the rotation of $\g$.
Note that $\ve_j$ takes the constant value for each case, I $\sim$ V. 
They have the common value for Case I, III, or V, and have a different
common value for Case II or IV.  This shows that the coefficient of $u^{2/3}$
in $\sum_j\ve_j\vD_j \ne 0$.  Hence $h(u) \ne 0$ as asserted.
\end{proof}
\par
Returning to the original setting, we show the following 
two propositions, 
which give the proof of Proposition 6.6.
%%%
\begin{prop}  %%% Prop.7.13
Assume that $W$ is of type $B_n$ with $n = d^2 + d$. 
We define an element $w_{\Br} \in W$ 
by the cycle type expression given in (7.3.2) as follows.
\begin{equation*}
\begin{aligned}
&[\ol 2,12,16,12+16, 16+16, \dots, 12 + 16k, 16+16k],
    &\quad\text{ if } d \equiv 1 \pmod 4,  \\ 
&[6, 16, 20, 16+16, 20+16,\dots, 16+16k, 20+16k],  
    &\quad\text{ if } d \equiv 2 \pmod 4, \\
&[4,8, 4+16, 8+16, \dots, 4+16k, 8+16k],
    &\quad\text{ if } d \equiv 3 \pmod 4, \\ 
&[8,12,8+16, 12+16, \dots, 8+16k, 12+16k],
    &\quad\text{ if } d \equiv 0 \pmod 4, \\
\end{aligned}
\end{equation*}
for some $ k \ge 1$, 
where the last term is equal to $4d-4$, and the next  
term is equal to $4d-8$, and so on. 
Then we have $f_{\vL_c}(w_{\Br}) \ne 0$.
\end{prop}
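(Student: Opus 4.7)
The plan is to reduce the computation of $f_{\vL_c}(w_{\Br})$, which by (7.8.1) equals $\d f_{d,d+1}(w_{\Br})$ up to the nonzero constant $\d$, to a small base case by iterating Lemma 7.12, and then to verify non-vanishing of each base case directly.

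First, I would check that the cycle structure of $w_{\Br}$ specified in the proposition is exactly what is required to apply Lemma 7.12 repeatedly. Reading the cycle type entries from right to left in the statement, the last two are $4d-8$ and $4d-4$, which coincide with $2a+2b-10$ and $2a+2b-6$ for $(a,b) = (d,d+1)$. Thus the hypothesis of Lemma 7.12 is satisfied, and we obtain
\begin{equation*}
f_{d,d+1}(w_{\Br}) = h_1(u)\, f_{d-4,d-3}(w_{\Br^{(1)}})
\end{equation*}
for a nonzero Laurent polynomial $h_1(u)$, where $\Br^{(1)}$ is obtained by deleting the last two entries of $\Br$. The next pair of lengths is $4d-24$ and $4d-20$, which matches $2(d-4)+2(d-3)-10$ and $2(d-4)+2(d-3)-6$, so the same lemma applies with the pair $(d-4,d-3)$, and the procedure iterates. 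Each step contributes a nonzero Laurent polynomial factor, so the product remains nonzero.

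Next, I would run this recursion until the pair $(a,b)$ cannot be reduced further. The residue class of $d$ modulo $4$ controls the stopping pair $(a_0,b_0)$ and the residual cycle word, and a direct matching with the four branches of the statement gives $(a_0,b_0)=(1,2)$ with residual word $[\bar 2]$ when $d\equiv 1 \pmod 4$; $(2,3)$ with residual $[6]$ when $d\equiv 2 \pmod 4$; $(3,4)$ with residual $[4,8]$ when $d\equiv 3 \pmod 4$; and $(4,5)$ with residual $[8,12]$ when $d\equiv 0 \pmod 4$. Thus non-vanishing of $f_{\vL_c}(w_{\Br})$ reduces to non-vanishing of $f_{a_0,b_0}(w_0)$ in each of these four explicit finite cases.

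Finally, I would dispatch each of the four base cases by direct application of the Murnaghan--Nakayama formula (Theorem 7.4). The sets $\CP_{n_0}^{a_0,b_0}$ are small, ranging from three elements for $(1,2)$ up to a bounded number for $(4,5)$, and the residual word has at most two cycles, so each $\Tr(T_{w_0},E_u^{\la})$ expands as a short sum of products of $\vD$'s indexed by chains $\emptyset\subseteq\mu^{(1)}\subseteq\mu^{(2)}=\la$ (a single term for $(1,2)$ and $(2,3)$). The main obstacle I anticipate is ensuring that the alternating sum defining $f_{a_0,b_0}(w_0)$ does not vanish as a Laurent polynomial in $u^{1/2}$; non-vanishing at a specific value of $u$ would not suffice for the specialization argument of Section 6. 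To handle this, I plan to isolate an extremal-degree monomial in $u^{1/2}$ and show its coefficient is nonzero by tracing which double partitions in $\CP_{n_0}^{a_0,b_0}$ can support it, in the same spirit as the coefficient-of-$u^{3/2}$ analysis at the end of the proof of Lemma 7.12, where the sign $(-1)^{|\a|}$ distinguished the contributing classes in the list of 7.10.
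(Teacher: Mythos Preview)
Your approach is correct and matches the paper's: iterate Lemma 7.12 to reduce $d$ by $4$ at each step, then verify the residual cases directly. Two small refinements the paper makes: for $d \equiv 0 \pmod 4$ your stopping pair $(4,5)$ still has both entries $\ge 4$, so one further application of Lemma 7.12 reduces to the trivial case $n'' = 0$, leaving only $d = 1,2,3$ as genuine base cases; and for $d = 3$ (where one entry of the pair is $3 < 4$ so the list in 7.10 does not apply verbatim) the paper reruns the border-strip classification with the now-impossible classes omitted rather than carrying out a full Murnaghan--Nakayama expansion.
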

\begin{proof}
We apply Lemma 7.12 with $a = d+1, b = d$.
Then $f_{\vL_c}(w_{\Br}) = h(u)f_{\vL_c}(w_{\Br''})$
for some non-zero $h(u)$, where 
$\Br = (r_1, \dots, r_k)$ and 
$\Br'' = (r_1, \dots, r_{k-2})$ with 
$r_k = n = d(d+1)$, $r_k  - r_{k-1} = 4d-4$, 
$r_{k-1} - r_{k-2} = 4d-8$, $r_{k-2} = (d-4)(d-3)$.
Thus the computation of $f_{\vL_c}(w_{\Br})$ is reduced to
the case where $d = 1,2,3$.  Assume that $d = 1$, then 
$n = 2$.  One can check by using the formula 
for $\ol\vD(X)$ that $f_{\vL_c}(w) \ne 0$ for $w = (\ol 2)$.
(Note that $f_{\vL_c}(w) = 0$ for $w = (2)$). 
Next assume that $d = 2$, then $n = 6$. 
The direct computation shows that  
$f_{\vL_c(w)} \ne 0$ for $w = (6)$.  
Finally assume that $d = 3$, then $n = 12$.
By using a similar method as in the proof of Lemma 7.12, 
one can show that $f_{\vL_c}(w) \ne 0$ for $w = (4,8)$
(we obtain a similar list, but some classes in the list in 
7.10 don't appear for this case).  This proves the pros position.
\end{proof}
\begin{prop}
Assume that $W$ is of type $D_n$ with $n = 4d^2$.  
We define an element $w_{\Br} \in W$ as follows.
\begin{equation*}
\begin{aligned}
&[\ol 1, \ol 3, 14, 18, 14+16, 18+16, \dots, 14 + 16k, 18+16k],
    &\quad\text{ if } d \equiv 1 \pmod 2,  \\ 
&[6, 10, 6+16, 10+16, \dots, 6+16k, 10+16k],  
    &\quad\text{ if } d \equiv 0 \pmod 2, \\
\end{aligned}
\end{equation*}
for some $k \ge 1$, where the last term is equal to 
$4d - 6$ and the next term is equal to $4d - 10$, and so
on.  Then we have $f_{\vL_c}(w_{\Br}) \ne 0$.
\end{prop}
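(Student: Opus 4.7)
The plan is to mirror the proof of Proposition 7.13, applying Lemma 7.12 iteratively to reduce to small base cases. By (7.7.2) the relevant parametrization for type $D_n$ with $n = 4d^2$ is $\wt\CP_n^d \simeq \CP_n^{2d, 2d}$ (via $(\a,\b) = (\b,\a)$), so I would apply Lemma 7.12 with $a = b = 2d$. Under this choice $2a+2b-6 = 8d-6$ and $2a+2b-10 = 8d-10$, and these are precisely the last two cycle lengths in the prescribed $w_{\Br}$: for $d$ odd, $18+16k$ and $14+16k$, and for $d$ even, $10+16k$ and $6+16k$. One first verifies that the reduction of Lemma 7.12 carries over verbatim to the $D_n$-setting because the lemma's cancellation argument (Lemma 7.9) depends only on the polynomials $\vD(X)$ attached to broken border strips, which are \emph{identical} in the two Murnaghan--Nakayama formulas (Theorems 7.4 and 7.6); the modification $\ol\vD \mapsto \ol\vD'$ only intervenes in the terminal single-border-strip step of the iteration.

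Each application of Lemma 7.12 yields a nonzero Laurent polynomial factor $h(u)$ times $f_{2d-4, 2d-4}(w_{\Br''})$, where $w_{\Br''}$ is obtained from $w_{\Br}$ by deleting the last two cycle lengths. Since this reduction decreases $d$ by $2$, iterating preserves the parity of $d$ and terminates, after finitely many steps, at the base case $d = 1$ (when $d$ is odd) with cycle type $[\bar{1}, \bar{3}]$, or at $d = 2$ (when $d$ is even) with cycle type $[6, 10]$. Before applying the lemma, one should check that the sequences $\Br$ produced in the iteration are admissible in the sense of 7.5, that is, they satisfy case (i) or (ii): for $d$ even, all $r_i$ are positive (case (i)), and for $d$ odd, $r_1 = -1$, $r_2 = -4$, and $r_i > 0$ for $i \geq 3$ (case (ii)); this is straightforward from the prescribed cycle.

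It then remains to verify the two base cases directly using Theorem 7.6. For $d = 1$, $n = 4$, the set $\wt\CP_4^1 = \{(2^2;-), (21^2;1), (2;1^2)\}$ consists of only three elements and the computation of $f_{2,2}([\bar{1}, \bar{3}])$ from (7.8.1) is a short explicit check. For $d = 2$, $n = 16$, one enumerates $\CP_{16}^{4,4}$ (the double partitions inscribed in the $4 \times 4$ rectangle) and evaluates the signed sum, relying again on Theorem 7.6 to express each $\Tr(T_{w_{\Br}}, E_u^{(\a,\b)})$ as a product of two $\vD$-factors arising from a border strip of length $6$ and one of length $10$.

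The main obstacle is the $d = 2$ base case, where the sum over $\CP_{16}^{4,4}$ involves enough terms that one wants a structural argument (rather than brute enumeration) ruling out cancellation. The natural approach is to mimic the classification of broken border strips of Section 7.10 applied now in length $2a+2b-6 = 10$ and in length $a+b-1 = 7$ respectively for the two disjoint cycles, comparing leading-degree terms in $u$ as in the proof of Lemma 7.12 to show that the coefficient of the top power (or, in case of cancellation at the top, some other distinguishable power) is nonzero. A secondary issue, which should be handled with care but not present serious difficulty, is to confirm that no element of $\CP_{16}^{4,4}$ has $\a = \b$, so that Theorem 7.6 (stated for $\la^\a \ne \la^\b$) applies to every term in the sum.
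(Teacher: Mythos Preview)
Your approach is essentially the paper's: apply Lemma~7.12 with $a=b=2d$ to peel off the last two cycle lengths $8d-6$ and $8d-10$, reduce $d\mapsto d-2$, and finish at small $d$. The justification you give for why the $B_n$ argument carries over (only $\vD$, not $\ol\vD'$, appears in the unbarred steps) is correct and matches the paper's remark that $\a\ne\b$ throughout $\CP_n^{2d,2d}$.

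The one place you make life harder than necessary is the $d=2$ case. You propose to treat it as a genuine base case and enumerate $\CP_{16}^{4,4}$, flagging this as the ``main obstacle''. But $a=b=4$ is still within the hypotheses of Lemma~7.12 (the classification in 7.10 is carried out assuming $a,b\ge 4$), so you can simply apply the lemma once more: it gives $f_{4,4}([6,10]) = h(u)\,f_{0,0}(\text{empty})$ with $h(u)\ne 0$ and $f_{0,0}=1$. This is exactly what the paper does, and it dissolves your ``main obstacle'' entirely. Your fallback plan of ``mimicking the classification of Section~7.10'' for $d=2$ would in fact just be reproving this instance of Lemma~7.12 by hand. (Incidentally, the second strip length at $d=2$ is $2a+2b-10=6$, not $a+b-1=7$.) After this simplification only the $d=1$ base case $[\bar 1,\bar 3]$ remains, and that is the short direct check with $\ol\vD'$ that you already describe.
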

\begin{proof}
We apply Lemma 7.12 with $a = b = 2d$.  
Note that since $\a \ne \b$ for any 
$\la = (\a,\b) \in \CP^{2d,2d}_n$, the argument for type $B_n$
can be applied without change.
Then 
$f_{\vL_c}(w_{\Br}) = h(u)f_{\vL_c}(w_{\Br''})$
for some non-zero $h(u)$, where 
$\Br = (r_1, \dots, r_k)$ and $\Br'' = (r_1, \dots, r_{k-2})$
with $r_k = n = 4d^2$, $r_k-r_{k-1} = 4d-6$, 
$r_{k-1} - r_{k-2} = 4d-10$, $r_{k-2} = 4(d-2)^2$.
Thus the computation of $f_{\vL_c}(w_{\Br})$ is reduced to the
case where $d = 1,2$. Assume that $d = 2$.  Then $n = 16$. 
Since $a = b = 4$, Lemma 7.12 can be applied, and we see 
that $f_{\vL_c}(w) \ne 0$ for $w = (6,10)$.  Next assume that 
$d = 1$.  Then $n = 4$.  One can check by using $\ol\vD'(X)$ 
that $f_{\vL_c}(w) \ne 0$ for $w = (\ol 1, \ol 3)$.
This proves the proposition.
\end{proof}

\begin{remark}  %% remark{7.15.}
As the formula (7.12.1) shows, our element 
$f_{\vL_c}(w_{\Br})$ turns out to be 0 if $u \mapsto 1$.   
So our computation cannot be performed in the level of 
Weyl groups.  On the other hand, 
Lusztig showed in [L5] that there exists an element 
$w \in W$ such that $f_{\vL_c}(w)|_{u=1} \ne 0$.  Thus
$f_{\vL_c}(w) \ne 0$ as a polynomial.  This $w$ has  
a simpler form than ours, but since it is a product of 
negative cycles of the full length (i.e., the sum of the
lengths of negative cycles is equal to $n$),  it is not
appropriate for the computation of the bitrace 
on the flags (cf. Lemma 6.10).
\end{remark}

%%%
%%%   

\end{document}